\documentclass[onefignum,onetabnum]{siamonline220329}

\usepackage{lipsum}
\usepackage{colortbl}
\usepackage{amsfonts}
\usepackage{graphicx}
\usepackage{epstopdf}
\usepackage{algorithmic}
\usepackage{cite}
\usepackage{soul}
\usepackage{amsmath,amssymb,amsfonts}
\usepackage{pgf,tikz}
\usetikzlibrary{shapes,shapes.arrows,decorations.pathreplacing,
    calc}
\usepackage{array}
\usepackage[thinlines]{easytable}
\usepackage{textcomp}
\usepackage{multirow}
\PassOptionsToPackage{table,xcdraw}{xcolor}
\usepackage[algo2e]{algorithm2e} 
\usepackage{enumitem}
\usepackage{comment}
\usepackage{leftidx}
\usepackage{float}
\usepackage[position=top]{subfig}
\usepackage{setspace}
%

\ifpdf
  \DeclareGraphicsExtensions{.eps,.pdf,.png,.jpg}
\else
  \DeclareGraphicsExtensions{.eps}
\fi

\usepackage{enumitem}
\setlist[enumerate]{leftmargin=.5in}
\setlist[itemize]{leftmargin=.5in}


\newsiamremark{remark}{Remark}
\newsiamremark{hypothesis}{Hypothesis}
\crefname{hypothesis}{Hypothesis}{Hypotheses}
\newsiamthm{claim}{Claim}

\headers{Inexact Multilevel FISTA for Image Restoration}{G. Lauga, E. Riccietti, N. Pustelnik and P. Gon\c calves}

\title{
IML FISTA: A Multilevel Framework for Inexact and Inertial Forward-Backward. Application to Image Restoration. 
\thanks{Submitted to the editors on DATE.
\funding{The authors would like to thank the GdR ISIS for funding the MOMIGS project and the ANR-19-CE48-0009 Multisc'In project.} We also gratefully acknowledge the support of the Centre Blaise Pascal's IT test platform at ENS de Lyon (Lyon, France) for the computing facilities. The platform operates the SIDUS \cite{quemener2013} solution developed by Emmanuel Quemener.}}

\author{Guillaume Lauga\thanks{Univ Lyon, Inria, EnsL, UCBL, CNRS, LIP, UMR 5668, F-69342, Lyon Cedex 07, France 
  (\email{guillaume.lauga@ens-lyon.fr}, 
  \email{elisa.riccietti@ens-lyon.fr},
  \email{paulo.goncalves@ens-lyon.fr}
  ).
  }
\and Elisa Riccietti\footnotemark[2]
\and Nelly Pustelnik\thanks{Ens de Lyon,
		CNRS, Laboratoire de Physique, F-69342, Lyon, France 
  (\email{nelly.pustelnik@ens-lyon.fr}).}
\and Paulo Gon\c calves\footnotemark[2]
}

\usepackage{amsopn}

\newcommand{\yhk}{y_{h,k}}
\newcommand{\yhkun}{y_{h,k+1}}

\newcommand{\xhk}{x_{h,k}}
\newcommand{\xhkun}{x_{h,k+1}}
\newcommand{\xHl}{s_{H,k,\ell}}
\newcommand{\xHlun}{s_{H,k,\ell+1}}
\newcommand{\xHO}{s_{H,k,0}}
\newcommand{\xHm}{s_{H,k,m}}

\newcommand{\taubarh}{\bar{\tau}_{h,k}}

\newcommand{\tauhk}{\tau_{h}}
\newcommand{\tauHl}{\tau_{H}}

\newcommand{\IhH}{I_h^H}
\newcommand{\IHh}{I_H^h}
\newcommand{\Lo}{L}
\newcommand{\R}{R}

\newcommand{\ehk}{e_{h,k}}

\newcommand{\vHk}{v_{H,k}}

\newcommand{\A}{\mathrm{A}}

\newcommand{\D}{\mathrm{D}}

\newcommand{\RR}{\mathbb{R}}
\newcommand{\Res}{\mathbf{R}}

\newcommand{\Id}{\mathrm{Id}}

\DeclareMathOperator*{\argmin}{arg\,min}
\DeclareMathOperator*{\Argmin}{Argmin}
\newcommand{\prox}{\normalfont \textrm{prox}}
\newcommand{\ftn}{\scriptsize}
\newcolumntype{L}[1]{>{\raggedright\let\newline\\\arraybackslash\hspace{0pt}}m{#1}}
\newcolumntype{C}[1]{>{\centering\let\newline\\\arraybackslash\hspace{0pt}}m{#1}}
\newcolumntype{R}[1]{>{\raggedleft\let\newline\\\arraybackslash\hspace{0pt}}m{#1}}

\ifpdf
\hypersetup{
  pdftitle={Inexact Multilevel FISTA for Image Restoration},
  pdfauthor={G. Lauga, E. Riccietti, N. Pustelnik and P. Gon\c calves}
}
\fi




\begin{document}

\maketitle

\begin{abstract}
This paper presents a multilevel framework for inertial and inexact proximal algorithms, that encompasses multilevel versions of classical algorithms such as forward-backward and FISTA.  The methods are supported by strong theoretical guarantees: we prove both the rate of convergence and the convergence of the iterates to a minimum in the convex case, an important result for ill-posed problems. 
We propose a particular instance of IML (Inexact MultiLevel) FISTA, based on the use of the Moreau envelope to build efficient and useful coarse corrections, fully adapted to solve problems in image restoration. Such a construction is derived for a broad class of composite optimization problems with proximable functions. 
We evaluate our approach on several image reconstruction problems and we show that it considerably accelerates the convergence of the corresponding one-level (i.e. standard) version of the methods, for large-scale images.

\end{abstract}

\begin{keywords}
multilevel optimization, inertial methods, image restoration, inexact proximal methods.
\end{keywords}

\begin{MSCcodes}
68U10, 65K10, 46N10
\end{MSCcodes}
\section{Introduction}
In the context of image restoration, we aim to recover a good quality image $\widehat x$ from a corrupted version $z = \A\bar{x}+\epsilon$ of an original image $\bar x$, where $\A$ models a linear degradation operator and $\epsilon$ stands for additive noise. This problem is known to be ill-posed, and is generally tackled by solving a regularized least squares
problem. This formulation involves a data-fidelity term $\Lo$ and a regularization term $\R$ that allows us to choose the properties one wishes to impose on the solution:
\begin{equation}
    \widehat{x} \in \Argmin_{x \in \RR^N} F(x):=\Lo(x) + \R(x),
    \label{eq:optim_article}
\end{equation}
where $\Lo:\RR^N \rightarrow (-\infty,+\infty]$ and $\R:\RR^N \rightarrow (-\infty,+\infty]$ belong to the class of convex, lower semi-continuous (l.s.c), and proper functions on $\RR^N$. Moreover, $\Lo$ is assumed to be  differentiable with $\beta$-Lipschitz gradient, while $\R$ is usually non-smooth. $F$ is supposed to be coercive. 

Many iterative algorithms have been proposed in the literature to estimate $\widehat{x}$ (cf. for instance \cite{Combettes_P_2011,parikh2014,chambolle2016,condat2021,combettes2021} and references therein). 
Most of them are based on the use of proximal methods, as $\R$ is non differentiable, and they all share the same weakness: the required computational time for the reconstruction turns prohibitive for large size problems. This is particularly critical when the proximity operator of $R$ cannot be computed explicitly \cite{villa2013,schmidt2011convergence,he2012accelerated}. It is the case when $\R$ is the sum of two functions \cite{chen2019}, when it encodes a total variation \cite{chambolle2009}, or a non-local total variation \cite{chierchia2014}. Indeed, for these two state-of-the-art regularizations, the proximity operator of $\R$ can be estimated by an iterative procedure in the dual domain (cf. \cite{beck2009}), which considerably increases the cost of the optimization. Many methods circumvent this dual optimization by directly introducing dual steps paired with primal steps to reach a minimizer \cite{boyd2010,chambolle2011,condat2013}, but their cost for large-scale problems remains high and they may still need to compute inexact proximity operators \cite{rasch2020inexact}. The already challenging task of designing algorithms that can handle large-scale problems turns even harder when inexact proximity operators are to be dealt with.

Various attempts have been made to accelerate the resolution of standard convex optimization problems, i.e., to reduce the number of necessary iterations to reach convergence \cite{boyd2010,chambolle2011,condat2013,beck2009,chambolle2015-1,fercoq2015,salim2020,chambolle2018,ehrhardt2018,chouzenoux2010,chouzenoux2022,chierchia2015epigraphical}. As convergence guarantees (e.g. to a minimzer) of the seminal forward-backward (FB) algorithm \cite{combettes2005} are paramount in the context of image restoration, these attempts are commonly constructed around the sequence generated by this algorithm (see for instance \cite{beck2009,aujol2015,chambolle2015-1,chouzenoux2014variable,chouzenoux2016block,combettes2021}). The $k$-th iteration of FB reads:
\begin{equation}\label{eq:fb} 
 (\forall k=0,1,\ldots)\qquad    x_{k+1}  =\prox_{\tau_k \R}(x_k - \tau_k \nabla \Lo(x_k)).
\end{equation}
where $0<\tau_k<2/\beta$. Among the most efficient methods to accelerate sequences obtained by \eqref{eq:fb}, the \textit{fast iterative soft thresholding algorithm} (FISTA) \cite{beck2009-1,chambolle2015-1}, is based on an inertial/Nesterov principle where an extrapolation step is built to improve at each iteration the forward-backward step. The $k-$th iteration of FISTA reads for every $k=0,1,\ldots$ as:
\begin{align}
    x_{k+1} & =\prox_{\tau_k \R}(y_k - \tau_k \nabla \Lo(y_k))\label{eq:fista_steps1} \\
    y_{k+1} & = x_{k+1} + \alpha_{k}(x_{k+1}-x_{k})
    \label{eq:fista_steps2}
\end{align}
where $0<\tau_k<1/\beta$ and $\alpha_k = \frac{t_{k}-1}{t_{k+1}}$. 
The sequence $\{t_k\}_{k \in \mathbb{N}}$ can be chosen in different ways, yielding different relaxations of the forward-backward sequence, but it must verify the general condition $t_{k}-t_{k+1}^2+t_{k+1}$ $\geq$ $0$ to guarantee convergence of the objective function to the optimal value. A common practice is to choose $t_0=1$ and $t_{k} = \left(\frac{k+a-1}{a}\right)^d \label{eq:tk_relaxed}$ for all $k \in \mathbb{N}^*$, with $d\in(0,1]$ and $a > \max\{1,(2d)^{\frac{1}{d}}\}$ \cite[Definition 3.1]{aujol2015}. This choice ensures convergence of the objective function with rate $o(1/k^{(2d)})$ and, under mild conditions \cite{chambolle2015-1,aujol2015}, weak convergence of the iterates to a minimizer. Here the parameter $d$ defines a continuous way to go from a standard FB to FISTA by steadily adding inertia. We will restrict ourselves to this specific choice in the following.

To go further with acceleration techniques, we aim to use the structure of these optimization problems to reduce both the number of iterations needed to converge and the computation time through some dimensionality reduction techniques. This is a recurring idea in a lot of algorithms proposed in the literature considering either stochastic block selection \cite{fercoq2015,salim2020,chambolle2018,ehrhardt2018} or subspace methods \cite{chouzenoux2010,chouzenoux2022}. 
Specifically here, we seek to combine inertial techniques with \textit{multilevel} approaches that exploit different resolutions of the same problem.  In such methods the objective function is approximated by a sequence of functions defined on reduced dimensional spaces (coarse scales) and  descent steps are calculated at coarse levels with smaller cost before being transfered back to the fine level. Our goal is to embed  such coarse correction into the descent step computed at fine level in \eqref{eq:fista_steps1} before computing the approximation of the proximity operator to benefit from both types of acceleration: inertial and multilevel.

Multilevel approaches have been mainly studied for the resolution of partial differential equations (PDEs), in which $\Lo$ and $\R$ are both supposed to be differentiable \cite{nash2000,calandra2020,gratton_TR}. Indeed, most of the multilevel algorithms are based on the seminal work of Nash \cite{nash2000} and are applied to smooth objective functions minimized by first order methods. They have been employed in many applications, such as photoacoustic tomography \cite{javaherian2017}, discrete tomography \cite{plier2021} and phase retrieval \cite{fung2020}. They have been also extended to higher order optimization in \cite{gratton_TR,ho2019,high_dim_ML}. 

Only recently this idea has been extended in \cite{parpas2016,parpas2017} to define multilevel FB algorithms applicable to problem \eqref{eq:optim_article} in the case where $\R$ is non differentiable but its proximity operator is known in closed form expression. In the experiments of \cite{parpas2017}, the framework is restricted to $\R = \Vert W\cdot \Vert_1$ with $W$ an orthogonal wavelet transform in the context of image restoration, and it is restricted to $\R = \Vert \cdot \Vert_1$ in the case of face recognition \cite{parpas2016}. These works were the first attempts to introduce multilevel methods in non-smooth optimization and they introduced key concepts such as the smoothing of $\R$ to obtain first order coherence between levels. Similar ideas have been proposed in \cite{ang2023} with adaptive restriction operators. This method requires strong convexity assumption on $\Lo$ to benefit from additional convergence properties. 

In our previous works, based on similar concepts, we proposed a multilevel forward-backward algorithm \cite{lauga2022} and a multilevel FISTA \cite{lauga2022_1}, both with stronger convergence guarantees than the one proposed in \cite{parpas2016,parpas2017,ang2023} (e.g., the convergence to a minimizer of the objective function). Our results do not require strong convexity assumption. 

Here, we extend our algorithmic procedure and its associated convergence guarantees to the more general case where the proximity operator of $\R$ is not necessarily known in explicit form. We replace the exact proximity operator in the forward-backward step of Equation \eqref{eq:fista_steps1}, by an approximated version:%
\begin{equation}
\label{eq:T_inexact}
(\forall x \in \RR^N) \qquad    \mbox{T}_{i}^{\epsilon}(x) \approx_{i,\epsilon} \prox_{\tau \R}\left(x - \tau \nabla \Lo(x)\right)
\end{equation} 

\noindent for some step-size $\tau>0$. In this expression, the index $i=\{0,1,2\}$ will refer to one of the three types of approximation that we will consider hereafter and $\epsilon$ corresponds to the induced approximation error  \cite{villa2013,aujol2015}. 
Accordingly, the inexact and inertial FB iterate reads: 
\begin{align}
    x_{k+1} & =\mbox{T}_{i}^{\epsilon_k}(y_k), \label{eq:inexact_fista_1}\\
    y_{k+1} & = x_{k+1} + \alpha_k(x_{k+1}-x_{k}). \label{eq:inexact_fista_2}
\end{align}

By injecting coarse corrections into the iterative scheme (\ref{eq:inexact_fista_1})-(\ref{eq:inexact_fista_2}), we propose a family of multilevel inertial forward-backward methods that we call IML FISTA for \textit{Inexact MultiLevel FISTA} .
It provides a multilevel extension of inertial strategies such as FISTA~\cite{beck2009-1,aujol2015}, that is fully adaptable to solve all problems of the form \eqref{eq:optim_article}, whether the proximity operator is known in close form, or approximated at each iteration. Naturally, when $d=0$, our framework coincides with a multilevel version of FB.

Our approach relies on the Moreau envelope, which in many cases can be easily derived to define smooth coarse approximations of $\R$.
Furthermore, we show that under mild assumptions, the convergence guarantees of inertial forward-backward algorithms \cite{aujol2015} hold also for IML FISTA. In particular, this is true for the convergence of the iterates, an important result for ill-posed problems. 

In addition, we propose a detailed version of the algorithm to solve Problem \eqref{eq:optim_article}, specifically designed for image restoration. Notably, we discuss the construction of coarse models and of information transfer operators that have good properties for image deblurring and image inpainting problems.

It is worth noticing that studying the properties of multilevel methods is a relevant perspective to tackle large scale problems in imaging: the multilevel framework is a quite general scheme that can be used whenever a hierarchical structure can be constructed on the underlying problem, as it is the case in this context. More importantly, such schemes can potentially be applied to any optimization method with suitable modifications, and the multilevel versions usually show faster convergence as compared to their one-level counterpart. 

As well as being interesting in its own right, the study of multilevel versions of FB and FISTA is therefore a first necessary step towards the acceleration of more complex schemes.

\noindent\paragraph{Contributions and organization of the article}
\begin{itemize}
    \item In Section \ref{sec:ml_framework}, we develop the first multilevel framework for inertial and inexact forward-backward to solve Problem \eqref{eq:optim_article}. Our proposition includes other multilevel methods previously proposed in the literature. We carry out the associated convergence analysis of the iterates and of the objective function. 
    \item In Section \ref{sec:MMiFB_image_restoration}, the proposed algorithm is specifically adapted to image restoration problems of the form \eqref{eq:optim_article}, when the proximity operator of $\R$  is not necessarily known in closed form. In addition, we focus on the design of wavelet-based transfer operators between resolution scales, for image reconstruction problems. 
    \item Extensive numerical experiments are performed in Section \ref{sec:results}, to compare the performances of IML FISTA versus FISTA on image reconstruction problems.
\end{itemize}
\section{Multilevel, Inexact and Inertial algorithm} 
\label{sec:ml_framework}

This first section focuses on Problem \eqref{eq:optim_article} to present the proposed IML FISTA in the most general context.
As in classical multilevel schemes for smooth optimization, our framework exploits a hierarchy of objective functions, representative of $F$ at different levels (scales or resolutions), and alternates minimization among these objective functions. 
The basic idea is to compute cheaper refinements at coarse resolution, which after prolongation to the fine levels, are used to update the current iterate.

\subsection{IML FISTA Algorithm}
\label{sec:MMiFB}
Without loss of generality and for the sake of clarity, we consider the two-level case: we index by $h$ (resp. $H$) all quantities defined at the fine (resp. coarse) level. We thus define $F_h:=F: \RR^{N_h}\rightarrow (-\infty,+\infty]$ the objective function at the fine level where $N_h=N$, such that $F_h = \Lo_h+ \R_h$ with $\Lo_h:=\Lo$ and $\R_h:=\R$ 
We associate this objective function at fine level with its coarse level approximation which we denote $F_H: \RR^{N_H}\rightarrow (-\infty,+\infty]$, with $N_H<N_h$, and in which $ \Lo_H,\R_H$ are lower dimensional approximations of $\Lo$ and $\R$.

One standard step of our algorithm can be summarized by the  following three instructions:
\begin{align}
    \bar{y}_{h,k} & = \text{ML}(\yhk), \label{eq:MLinexact_fista_0}\\
    \xhkun & = \mbox{T}_{i}^{\epsilon_{h,k}}(\bar{y}_{h,k}), \label{eq:MLinexact_fista_1}\\
    \yhkun & = \xhkun + \alpha_{h,k}(\xhkun-x_{h,k}) \label{eq:MLinexact_fista_2}
\end{align}
which are developped in detail in Algorithm \ref{alg:MMiFB_general}, and where  ML encompasses Steps \ref{alg_step:debutML} to \ref{alg_step:finML}.
Given the current iterate $\yhk$ at fine level, we can decide to update it either by a standard fine step, combining Steps \ref{alg_step:standard_update} and \ref{alg_step:FB}-\ref{alg_step:extrapolation} of the algorithm, or by performing iterations at the coarse level (cf. steps \ref{alg_step:projection}-\ref{alg_step:prolongation}) followed by a standard fine step (cf. \ref{alg_step:FB}-\ref{alg_step:extrapolation}). 
A particular attention needs to be paid to steps \ref{alg_step:projection}-\ref{alg_step:prolongation}, which produce a coarse correction that is used to define an intermediate fine iterate $\bar{y}_{h,k}$. The coarse correction is used to update the auxiliary variable $\yhk$ and not $\xhk$ directly (see Equations \eqref{eq:MLinexact_fista_1} and \eqref{eq:MLinexact_fista_2}). %
Thus, to obtain this coarse correction, the current iterate $\yhk$ is projected to the coarse level thanks to a projection operator $\IhH$, and it is used as the initialisation for the minimization of the coarse approximation $F_H$, which generates a sequence $(\xHl)_{\ell \in \mathbb{N}}$, where $k$ represents the current iteration at the fine level and $\ell$ indexes the iterations at the coarse level. %
This sequence is defined by $\xHlun = \Phi_{H,\ell}(\xHl)$, with $\Phi_{H,\ell}$ any operator such that, after $m>0$ coarser iterations,  $F_H(\xHm)\leq F_H(\xHO)$. For a discussion about an adequate choice for $m$, the reader could refer to \cite{lauga2022_1}. While this operator has to implicitly adapt to the current step $k$, its general construction does not depend on $k$.
After $m$ iterations at the coarse level we obtain a coarse direction $\xHm-\xHO$, prolongated at the fine level to update $\yhk$. 

A multilevel scheme requires transferring information from one level to an other. To do so,  we define two transfer information operators: a linear operator $\IhH: \RR^{N_h} \to \RR^{N_H}$ referred to as the \textit{restriction operator} that sends information from the fine level to the coarse level, and reciprocally $\IHh: \RR^{N_H} \to \RR^{N_h}$, the \textit{prolongation operator} that sends information from the coarse level back to the fine level. 

\begin{algorithm}
\caption{IML FISTA}\label{alg:MMiFB_general}
\begin{algorithmic}[1]
\STATE Set $x_{h,0},y_{h,0} \in \RR^N$, $t_{h,0}=1$
\WHILE {Stopping criterion is not met} 
    \IF {Descent condition and $r<p$} \label{alg_step:debutML}
    \STATE $r = r+1$,
    \STATE $\xHO  = \IhH \yhk \text{ \textcolor{gray}{Projection}}$ \label{alg_step:projection}\\
    \STATE $\xHm = \Phi_{H,m-1} \circ .. \circ \Phi_{H,0}(\xHO) \text{ \textcolor{gray}{Coarse minimization}}$\\
    \STATE Set $\taubarh>0$, \\
    \STATE $\bar{y}_{h,k} = \yhk + \mbox{$\taubarh$}\IHh\left(\xHm-\xHO\right) \text{ \textcolor{gray}{Coarse step update whose size is set by $\taubarh$}} \label{alg_step:prolongation}$
    \ELSE 
     \STATE $\bar{y}_{h,k} = y_{h,k}$ \label{alg_step:standard_update}
    \ENDIF \label{alg_step:finML}
    \STATE $x_{h,k+1} =\mbox{T}_{i}^{\epsilon_{h,k}}(\bar{y}_{h,k}) \text{ \textcolor{gray}{forward-backward step}}$ \label{alg_step:FB}\\
    \STATE $t_{h,k+1} = \left(\frac{k+a}{a}\right)^d$, $\alpha_{h,k} = \frac{t_{h,k}-1}{t_{h,k+1}}$ \label{alg_step:tk} \\
    \STATE $\yhkun = \xhkun + \alpha_{h,k}(\xhkun-x_{h,k}). \text{ \textcolor{gray}{Inertial step}}$ \label{alg_step:extrapolation}
\ENDWHILE
\end{algorithmic}

\end{algorithm}
\vspace{-0.3cm}

The central point of multilevel approaches is to ensure that the correction term $\xHm-\xHO$, after prolongation from the coarse to the fine level, leads to a decrease of $F_h$. For this, particular care must be taken in the selection of the following elements:
\begin{enumerate}[label=(\roman*)]
\item the coarse model $F_H$,
\item the minimization scheme $\Phi_{H, \cdot}$, 
\item the information transfer operators $\IhH$ and $\IHh$.
\end{enumerate}
We detail these choices in the following subsections. 

\subsubsection{Coarse model $F_H$} 
In our algorithm the construction of coarse functions relies on smoothing the non differentiable $\R_h$ \cite{beck2012} to maintain fidelity with the fine model, and at the same time to impose desirable properties to the coarse model. 

As demonstrated in \cite{lauga2022,lauga2022_1}, smoothing is a natural choice to extend ideas coming from the classical smooth case \cite{gratton2008} to multilevel proximal gradient methods. We take the ideas originally proposed in \cite{parpas2016,parpas2017}, and develop them further in the present contribution.  

\begin{definition}{\emph{(Smoothed convex function \cite[Definition 2.1]{beck2012})}} \label{def:smooth_convex_func} Let $R$ be a convex, l.s.c., and proper function on $\RR^{N}$. For every $\gamma>0$, a continuously differentiable $R_{\gamma}$ %
is a smoothed convex approximation of $R$ if there exist finite valued scalars $\eta_1,\eta_2$ satisfying $\eta_1+\eta_2 >0$ such that the following holds:
\begin{equation}
    (\forall y \in \RR^{N}) \qquad R(y) - \eta_1 \gamma \leq R_{\gamma}(y) \leq R(y) + \eta_2 \gamma.
\end{equation}
\end{definition}

Such smoothed convex functions exist if the smoothing is done according to the principles developed in \cite{beck2012} where the sum  $\eta_1+\eta_2$ depends on $R$ and on the type of smoothing.
\begin{definition}{\emph{(Coarse model $F_{H}$ for non-smooth functions.)}}\label{def:coarse_model}
The coarse model $F_{H}$ is defined for the point $y_h\in\RR^{N_h}$ as:
\begin{equation}
\label{eq:FH}
  \qquad F_{H} = \Lo_H + \R_{H,\gamma_H} + \langle v_H,\cdot \rangle,
\end{equation}
where
\begin{align}
\label{eq:v_Hk_exact}
v_H = \IhH \left(\nabla \Lo_h(y_h) + \nabla \R_{h,\gamma_h}(y_h)\right) -(\nabla \Lo_H(\IhH y_h) + \nabla\R_{H,\gamma_H}(\IhH y_h)).
\end{align} 
$\R_{h,\gamma_h}$ and  $\R_{H,\gamma_H}$ are  smoothed versions of $\R_h$ and $\R_H$ respectively, and they verify Definition \ref{def:smooth_convex_func} with smoothing parameters $\gamma_h>0$ and $\gamma_H>0$.
\end{definition}
Adding the linear term $\langle v_H,\cdot \rangle$ to $\Lo_H + \R_{H,\gamma_H}$ allows to impose the so-called \emph{first order coherence} recalled in Definition \ref{def:first_order_coherence} below.
\begin{remark}
    Note that if $\R_h$ and $\R_H$ are smooth by design, one can simply replace $\R_{H,\gamma_H}$ and $\R_{h,\gamma_h}$ by $\R_H$ and $\R_h$, respectively. The construction stays otherwise the same. %
\end{remark}
\begin{definition}{\emph{(First order coherence \cite{nash2000,parpas2016,parpas2017})}.}\label{def:first_order_coherence}
The first order coherence between the smoothed version of the objective function $F_h$ at the fine level and the coarse level objective function $F_H$ is verified in a neighbourhood of $y_h$ if the following equality holds:
\begin{equation}\label{eq:first_order_coherence}
\nabla F_{H}(\IhH y_h) = \IhH \nabla \left(\Lo_h + \R_{h,\gamma_h}\right)(y_h).
\end{equation} 
\end{definition}
\begin{lemma}
\label{lemma:first_order_coherence}
If $F_H$ is given by Definition \ref{def:coarse_model},  it necessarily verifies the first order coherence (Definition \ref{def:first_order_coherence}).
\begin{proof}
Considering the gradient of the coarse model $F_{H}$ and combining it with the definition of $v_H$ in Equation \eqref{eq:v_Hk_exact}, yields 
\begin{equation}
\begin{split}
    \nabla F_{H}(\IhH y_h) & = \nabla \Lo_H(\IhH y_h) + \nabla\R_{H,\gamma_H}(\IhH y_h) + v_H, \\
    & = \IhH \left(\nabla \Lo_h(y_h) + \nabla \R_{h,\gamma_h}(y_h)\right).
\end{split}
\end{equation}
\end{proof}
\end{lemma}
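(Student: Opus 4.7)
The plan is straightforward: compute $\nabla F_H(I_h^H y_h)$ directly from Definition \ref{def:coarse_model} and observe that the linear correction $\langle v_H, \cdot \rangle$ is built precisely so that the first order coherence condition \eqref{eq:first_order_coherence} holds as an identity. In other words, the lemma is a verification that the Galerkin-type correction term $v_H$ defined in \eqref{eq:v_Hk_exact} performs its intended role.

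Concretely, I would first apply linearity of the gradient to \eqref{eq:FH}, using the fact that $L_H$ is differentiable by assumption and $R_{H,\gamma_H}$ is continuously differentiable by Definition \ref{def:smooth_convex_func}, together with $\nabla \langle v_H, \cdot \rangle = v_H$. This gives, at the point $I_h^H y_h$,
$$\nabla F_H(I_h^H y_h) = \nabla L_H(I_h^H y_h) + \nabla R_{H,\gamma_H}(I_h^H y_h) + v_H.$$
Next I would substitute the expression \eqref{eq:v_Hk_exact} for $v_H$. By construction the two terms $\nabla L_H(I_h^H y_h) + \nabla R_{H,\gamma_H}(I_h^H y_h)$ cancel against their negatives inside $v_H$, and what remains is exactly $I_h^H\bigl(\nabla L_h(y_h) + \nabla R_{h,\gamma_h}(y_h)\bigr)$, which is the right-hand side of Definition \ref{def:first_order_coherence}.

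No real obstacle is anticipated: the result is essentially a tautology driven by the definition of $v_H$. The only thing worth stating once at the start is that the gradients at play are well defined, which is ensured by the smoothness hypotheses on $L_h, L_H$ and on the smoothed regularizers $R_{h,\gamma_h}, R_{H,\gamma_H}$. Everything else is a two-line substitution.
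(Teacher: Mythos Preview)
Your proposal is correct and follows essentially the same approach as the paper's proof: compute $\nabla F_H(\IhH y_h)$ by linearity, substitute the definition of $v_H$, and observe the cancellation that leaves exactly $\IhH(\nabla \Lo_h(y_h)+\nabla \R_{h,\gamma_h}(y_h))$. The only addition you make is the explicit remark that all gradients are well defined thanks to the smoothness assumptions, which is a harmless clarification.
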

This condition ensures that, in the neighbourhood of the current iterates $y_h = \yhk$ and $\IhH \yhk = \xHO$, smoothed versions of the fine and of the coarse level objective functions are coherent up to order one \cite{parpas2017}.

Figure \ref{fig:first_order_coherence} illustrates the effect of the first order coherence on the alignment of the  gradients of smooth objective functions at fine and coarse levels.

\begin{figure}
    \centering
    \includegraphics[width = 0.8 \textwidth]{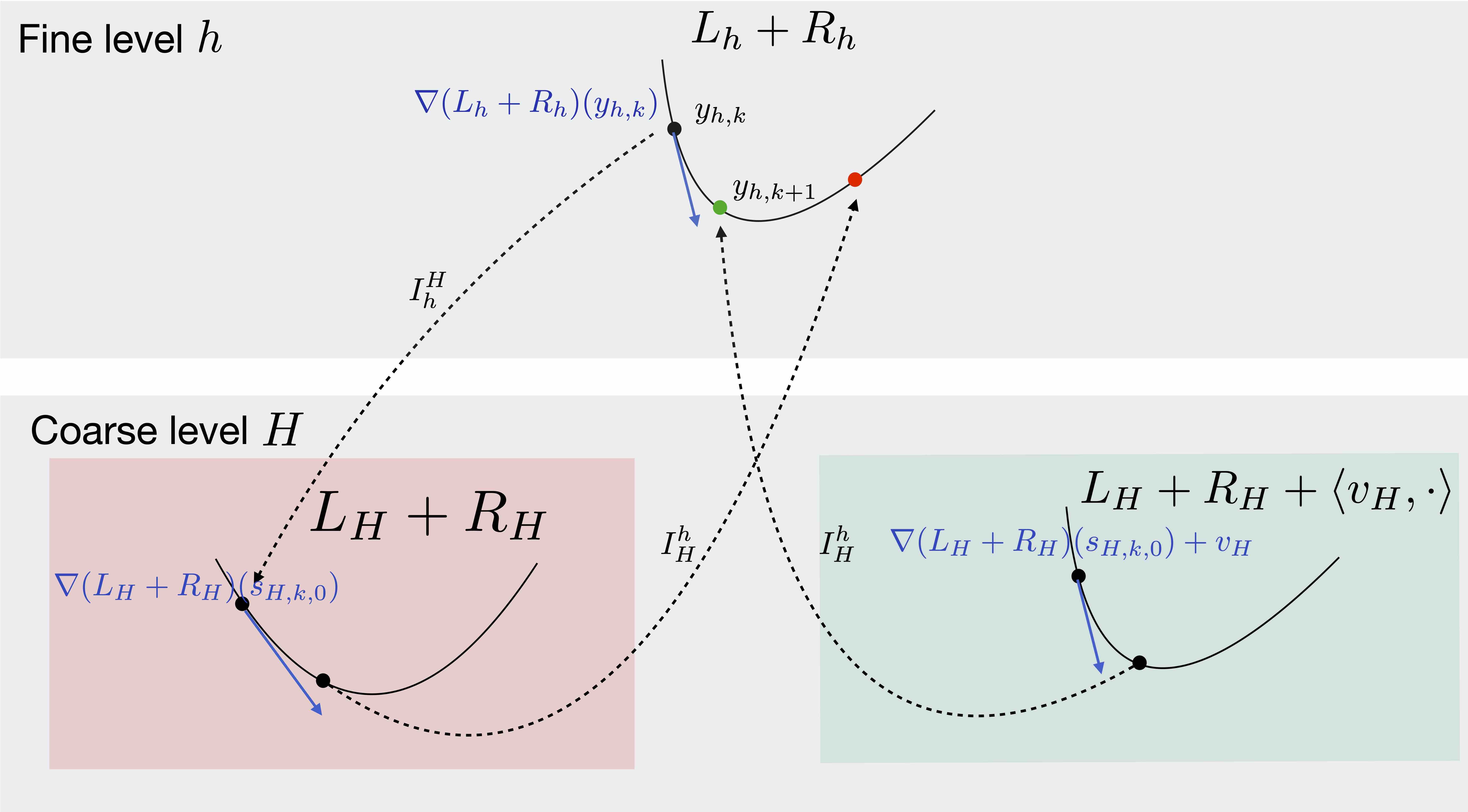}
    \caption{Illustration of the first order coherence between two smooth functions $\Lo_h+\R_h$ and $\Lo_H+\R_H$. Left lower part: Without first order coherence, points decreasing $\Lo_H+\R_H$ do not necessarily decrease $\Lo_h+\R_h$. Right lower part: First order coherence rotates the graph of $\Lo_H+\R_H$ around $\xHO$ so that decreasing $\Lo_H+\R_H$ also entails decreasing $\Lo_h+\R_h$.\vspace{-0.6cm}}
    \label{fig:first_order_coherence}
\end{figure}

\subsubsection{Choice of coarse iterations} The operators $\Phi_{H,\bullet}$ aim to build a sequence producing a sufficient decrease of $F_H$ after $m$ iterations.
\begin{assumption}{(Coarse model decrease)} \label{def:coarse_operator}
Let $(\Phi_{H,\ell})_{\ell \in \mathbb{N}}$ be a sequence of operators such that there exists an integer $m>0$ that guarantees that if $s_{H,m} = \Phi_{H,m-1} \circ \ldots \circ \Phi_{H,0}(s_{H,0})$ then $F_H(s_{H,m})\leq F_H(s_{H,0})$. Moreover, $s_{H,m}-s_{H,0}$ is bounded.
\end{assumption} 
Some typical choices for $\Phi_{H,\ell}$ are the gradient descent step, inertial gradient descent step, forward-backward step or inertial forward-backward step (see \cite{lauga2022_1} for a comparison of these operators in a multilevel context - the choice depends on the intensity of degradation for image reconstruction problems). These operators guarantee that $s_{H,m}-s_{H,0}$ is a bounded (through convergence of the sequence \cite{aujol2015}) descent direction for $F_H$. 

\subsubsection{Construction of information transfer operators} \label{sec:information_transfer} Going from one level to the other requires several information transfers. For this purpose we use the following classical definition.
\begin{definition} \label{def:inf_transf_op} The two operators  $\IhH: \RR^{N_h} \to \RR^{N_H}$ and $\IHh: \RR^{N_H} \to \RR^{N_h}$ are \textit{coherent information transfer (CIT)} operators, if there exists $\nu>0$ such that:
\begin{equation}
    \IHh = \nu (\IhH)^T.
\end{equation}
\end{definition}
There are many ways to construct CIT operators. The most standard one for multilevel methods is the dyadic decimated weighted operator \cite{briggs2000}. In the particular case of squared grids of size $\sqrt{N_h}\times\sqrt{N_h}$ and  $\sqrt{N_H}\times\sqrt{N_H}$ at fine and coarse level respectively, and for $N_H = N_h/4$ corresponding to a decimation factor of 2 along rows and columns, the restriction operator reads: 
\begin{equation}
    \IhH =  \frac{1}{16}
    \underbrace{\left(\begin{array}{ccccccc}
        2 & 1 & 0 & \ldots & & & 0\\
        0 & 1 & 2 & 1 & 0 & \ldots & 0 \\
        \vdots & \ddots & \ddots & & & & 0\\
        0  & \ldots & & 0 & 1 & 2 & 1
    \end{array}\right)}_{\sqrt{N_h}/2 \times \sqrt{N_h}} \otimes \underbrace{\left(\begin{array}{ccccccc}
        2 & 1 & 0 & \ldots & & & 0\\
        0 & 1 & 2 & 1 & 0 & \ldots & 0 \\
        \vdots & \ddots & \ddots & & & & 0\\
        0  & \ldots & & 0 & 1 & 2 & 1
    \end{array} \right)}_{\sqrt{N_h}/2 \times \sqrt{N_h}}\in \RR^{N_H \times N_h}. \vspace{-1em}
    \label{eq:inf_trans_standard}
\end{equation}
The pair ($\IhH$, $\IHh$) provides a simple and intuitive way 
to transfer information back and forth between fine and coarse scales, by means of linear B-spline interpolation. Other operators of the form of \eqref{eq:inf_trans_standard} corresponding to higher order interpolation have been proposed in \cite{donatelli2010} and are  commonly used in multigrid methods for solving PDEs \cite{donatelli2012}.
The literature on transfer operators being much more developed in the context of PDEs, it gives a rich starting point for multilevel optimization algorithms. In particular, the authors of \cite{greenfeld2019} introduced a learning framework to optimize multigrid PDEs solvers that pay great attention to the properties of the information transfer operators. 
\subsubsection{Fine model minimization with multilevel steps}
With the previous definitions of $F_H$, $\Phi_{H,\bullet}$ and $\IhH$, the following lemmas prove that minimization at the coarse level also induces a descent direction at the fine level. 
\begin{lemma}{\emph{(Descent direction for the fine level smoothed function)}.}\label{lemma:decrease_smooth}
Let us assume that $\IhH$ and $\IHh$ are CIT operators and that $F_H$ satisfies Definition \ref{def:coarse_model}. 
and  $\Phi_{H,\bullet}$ verifies  Assumption \ref{def:coarse_operator}. 
Then, $\IHh(s_{H,m}-s_{H,0})$ is a descent direction for $\Lo_h + \R_{h,\gamma_h}$.
\begin{proof}
Set $y_h \in \RR^{N_h}$ and let us define $p_H:=s_{H,m}-s_{H,0}$. Recall that $s_{H,0} = \IhH y_h$. From the definition of descent direction we have that: 
\begin{equation*}
     \langle p_H,\nabla F_{H}(s_{H,0}) \rangle \leq 0.
\end{equation*}
By the  first order coherence and imposing $\IhH = \nu^{-1} \left(\IHh\right)^T$ we obtain
\begin{align*}
    \langle p_H,\nabla F_{H}(s_{H,0}) \rangle = \langle p_H,\IhH \nabla (\Lo_h + \R_{h,\gamma_h})(y_h) \rangle =
    \nu^{-1} \langle \IHh(p_H),\nabla (\Lo_h + \R_{h,\gamma_h})(y_h) \rangle \leq 0.
\end{align*}
\end{proof}
\end{lemma}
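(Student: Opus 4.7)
The plan is to show $\langle \IHh p_H, \nabla(\Lo_h + \R_{h,\gamma_h})(y_h)\rangle \leq 0$ with $p_H := s_{H,m}-s_{H,0}$, by chaining three ingredients already at our disposal: (i) the coarse decrease guaranteed by Assumption \ref{def:coarse_operator}, (ii) the first order coherence identity of Lemma \ref{lemma:first_order_coherence}, and (iii) the CIT transpose relation of Definition \ref{def:inf_transf_op}. The whole argument is essentially a two-line computation once these pieces are lined up in the right order.

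First I would work at the coarse level. By Assumption \ref{def:coarse_operator}, $F_H(s_{H,m}) \leq F_H(s_{H,0})$. Note that $F_H$ is convex and differentiable: $\Lo_H$ is smooth and convex, $\R_{H,\gamma_H}$ is smooth and convex by Definition \ref{def:smooth_convex_func}, and the added term $\langle v_H,\cdot\rangle$ is affine. The convexity inequality at $s_{H,0}$ thus yields
\begin{equation*}
\langle p_H,\nabla F_H(s_{H,0})\rangle \;\leq\; F_H(s_{H,m}) - F_H(s_{H,0}) \;\leq\; 0,
\end{equation*}
so $p_H$ is a (weak) descent direction for $F_H$ at $s_{H,0}$.

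Second, I would transport this inequality to the fine level. Since $s_{H,0} = \IhH y_h$ by the projection step of Algorithm \ref{alg:MMiFB_general}, Lemma \ref{lemma:first_order_coherence} gives $\nabla F_H(s_{H,0}) = \IhH \nabla(\Lo_h + \R_{h,\gamma_h})(y_h)$. Plugging this into the previous bound and invoking the CIT relation $\IhH = \nu^{-1}(\IHh)^T$ to move the transpose across the inner product, I obtain
\begin{equation*}
0 \;\geq\; \langle p_H, \IhH \nabla(\Lo_h+\R_{h,\gamma_h})(y_h)\rangle \;=\; \nu^{-1}\,\langle \IHh p_H,\nabla(\Lo_h+\R_{h,\gamma_h})(y_h)\rangle.
\end{equation*}
Since $\nu > 0$, the inequality $\langle \IHh p_H,\nabla(\Lo_h+\R_{h,\gamma_h})(y_h)\rangle \leq 0$ is exactly the descent direction property asserted by the lemma.

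I do not anticipate a real obstacle: each of the three ingredients is already established earlier in the paper, and the argument is little more than a substitution. The only point to be careful about is that ``descent direction'' is understood here in the non-strict sense ($\leq 0$); strict negativity would require an extra hypothesis such as strict decrease $F_H(s_{H,m}) < F_H(s_{H,0})$ together with convexity (or $\nabla F_H(s_{H,0}) \neq 0$), and this is not needed for the subsequent use of the lemma.
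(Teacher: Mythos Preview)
Your proof is correct and follows essentially the same route as the paper: establish $\langle p_H,\nabla F_H(s_{H,0})\rangle\leq 0$ at the coarse level, then transport to the fine level via first order coherence and the CIT transpose relation. In fact you are slightly more explicit than the paper, which asserts the coarse inequality ``from the definition of descent direction'' without spelling out the convexity argument you provide.
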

We can now go a step further and derive a bound on the decrease of the (non-smooth) objective function at the fine level $F_h:=L_h+R_h$. Following \cite{parpas2016,parpas2017}, we search a proper step size  $\bar{\tau}_h$ that avoids ``too" big corrections from the coarse level by guaranteeing that:
\begin{equation}
\label{eq:tau_bar}
(L_h+R_{h,\gamma_h})(y_h + \bar{\tau}_h\IHh(s_{H,m}-s_{H,0})) \leq (L_h+R_{h,\gamma_h})(y_h).
\end{equation}
\begin{lemma}{\emph{(Fine level decrease)}.}\label{lemma:fine_level_decrease} If the assumptions of Lemma  \ref{lemma:decrease_smooth} hold, the iterations of Algorithm \ref{alg:MMiFB_general} ensure:
\begin{equation}
\label{eq:fine_level_decrease}
F_h(y_h + \bar{\tau}\IHh(s_{H,m}-s_{H,0})) \leq F_h(y_h) + (\eta_1 + \eta_2) \gamma_h.
\end{equation}
\end{lemma}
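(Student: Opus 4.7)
The statement is essentially a sandwich argument: the decrease guarantee in Equation~\eqref{eq:tau_bar} is formulated for the smoothed objective $L_h+R_{h,\gamma_h}$, whereas the lemma concerns the true objective $F_h=L_h+R_h$. The plan is to pay the smoothing error twice, once at the trial point and once at the base point, so as to transfer the descent inequality from the smoothed function to the non-smooth one. This uses only Definition~\ref{def:smooth_convex_func}.

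More concretely, let $y_h^+ := y_h + \bar\tau_h \IHh(s_{H,m}-s_{H,0})$. First I would apply the upper bound of Definition~\ref{def:smooth_convex_func} to $R_h$ evaluated at $y_h^+$, namely $R_h(y_h^+) \leq R_{h,\gamma_h}(y_h^+) + \eta_1\gamma_h$, which gives
\begin{equation*}
F_h(y_h^+) \;=\; L_h(y_h^+) + R_h(y_h^+) \;\leq\; \bigl(L_h + R_{h,\gamma_h}\bigr)(y_h^+) + \eta_1\gamma_h.
\end{equation*}
Next, the choice of $\bar\tau_h$ in Equation~\eqref{eq:tau_bar} (which Algorithm~\ref{alg:MMiFB_general} enforces, and whose existence is guaranteed by Lemma~\ref{lemma:decrease_smooth} since $\IHh(s_{H,m}-s_{H,0})$ is a descent direction for $L_h+R_{h,\gamma_h}$) yields
\begin{equation*}
\bigl(L_h + R_{h,\gamma_h}\bigr)(y_h^+) \;\leq\; \bigl(L_h + R_{h,\gamma_h}\bigr)(y_h).
\end{equation*}
Finally, the lower bound of Definition~\ref{def:smooth_convex_func} gives $R_{h,\gamma_h}(y_h) \leq R_h(y_h) + \eta_2\gamma_h$, so
\begin{equation*}
\bigl(L_h + R_{h,\gamma_h}\bigr)(y_h) \;\leq\; F_h(y_h) + \eta_2\gamma_h.
\end{equation*}
Chaining the three inequalities yields the claimed bound $F_h(y_h^+) \leq F_h(y_h) + (\eta_1+\eta_2)\gamma_h$.

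There is essentially no obstacle here; the only subtle point is to be explicit that the descent direction hypothesis of Lemma~\ref{lemma:decrease_smooth} is what legitimizes choosing a positive step $\bar\tau_h$ satisfying Equation~\eqref{eq:tau_bar}, and that $\eta_1+\eta_2>0$ is already assumed in Definition~\ref{def:smooth_convex_func}, so the residual term $(\eta_1+\eta_2)\gamma_h$ is indeed nonnegative (i.e.\ the lemma is a meaningful ``approximate descent'' statement for the true objective, controlled by the smoothing level).
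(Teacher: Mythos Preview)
Your proof is correct and follows essentially the same three-step sandwich argument as the paper: bound $F_h$ by the smoothed objective at the trial point (incurring $\eta_1\gamma_h$), apply the smoothed descent inequality~\eqref{eq:tau_bar}, then bound the smoothed objective back by $F_h$ at the base point (incurring $\eta_2\gamma_h$). One cosmetic remark: what you call the ``upper bound'' and ``lower bound'' of Definition~\ref{def:smooth_convex_func} are actually the left and right inequalities respectively (the first gives an upper bound on $R_h$, the second an upper bound on $R_{h,\gamma_h}$), but the inequalities you write are correct.
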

\begin{proof}
This directly comes from the definition of a smoothed convex function (Definition \ref{def:smooth_convex_func}). As there exists a value of $\bar{\tau}_h$ satisfying Equation \eqref{eq:tau_bar}, we have:
\begin{equation}
\begin{split}
    F_h(y_h + \bar{\tau}_h\IHh(s_{H,m}-s_{H,0})) & \leq (L_h+R_{h,\gamma_h})(y_h+\bar{\tau}_h\IHh(s_{H,m}-s_{H,0})) + \eta_1 \gamma_h \\
    & \leq (L_h+R_{h,\gamma_h})(y_h) + \eta_1 \gamma_h \\
    & \leq F_h(y_h) + (\eta_1 + \eta_2) \gamma_h.
\end{split}
\end{equation}
\end{proof} %

This result shows that a coarse level minimization step leads to a decrease of $F_h$, up to  a constant $(\eta_1+\eta_2) \gamma_h$ that can be made arbitrarily small by driving $\gamma_h$ to zero.

This type of result is commonly found in the literature of multilevel algorithms \cite{parpas2016,parpas2017,lauga2022,lauga2022_1} but it is not sufficient to guarantee the convergence of the generated sequence. In the next section we derive stronger convergence guarantees.

\subsection{Convergence of the iterates}\label{sec:convergence}

In order to obtain the convergence of the iterates to a minimizer of $F=F_h$ and the optimal rate of convergence of the objective function values, we need to take into account two types of inexactness in the computation of an iterate: one on the proximity operator of $\R_h$ and one on the gradient of $\Lo_h$. The error on the gradient will allow us to compute coarse corrections with our multilevel framework, while the error on the proximity operator will allow us to consider approximation of proximity operators whose closed form is unknown.

The goal of this section is to show that an iteration of our algorithm (Steps \ref{alg_step:FB}-\ref{alg_step:extrapolation} in Algorithm \ref{alg:MMiFB_general}) can be reformulated as:
\begin{equation}
\begin{split}
   \xhkun & \approx_{i,\epsilon_{h,k}} \prox_{\tauhk \R_h}\left(\yhk - \tauhk \nabla \Lo_h \left(\yhk \right) + \ehk \right), \\
   \yhkun & = \xhkun + \alpha_{h,k}(\xhkun - \xhk),
\end{split}
\end{equation}
where we introduce $\ehk$ to model uncertainties on the gradient step  due to the multilevel corrections and the pair $(i,\epsilon_{h,k})$ introduced in \eqref{eq:T_inexact}, to designate the type and the accuracy of the proximity operator approximation.
Such rewriting allows us to fit in the framework described by the authors of \cite{aujol2015} to define an inexact and inertial forward-backward algorithm. 

\paragraph{Inexactness due to coarse corrections}
As presented in the algorithm, a coarse correction is inserted before a typical fine level step. We can see this coarse correction as some kind of error on the gradient of $\Lo_h$. In a typical multilevel step, at the fine level (cf. Steps \ref{alg_step:FB} and \ref{alg_step:prolongation} of Algorithm~\ref{alg:MMiFB_general}), the update would simply take the form: %
\begin{align}
\xhkun & \approx_{i,\epsilon_{h,k}} \prox_{\tauhk \R_h}\left(\bar{y}_{h,k} - \tauhk \nabla \Lo_h \left(\bar{y}_{h,k}\right) \right),
\label{eq:typ_mult_step}\\
 \bar{y}_{h,k} & =\yhk + \taubarh\IHh (\xHm-\xHO).
 \end{align} 
It is straightforward that the coarse corrections are finite as we sum a finite number of bounded terms, thanks to computing updates at the coarse level with a Lipschitz gradient. This reasoning is detailed in the following proof.
%
\begin{lemma}{\emph{(Coarse corrections are finite)}} \label{lemma:correction} 
Let $\beta_{h}$ and $\beta_{H}$ be the Lipschitz constants of the gradients of $\Lo_h$ and $\Lo_H$, respectively. Assume that we compute at most $p$ coarse corrections. Let $\tauhk, \tauHl \in (0, +\infty)$ be the step sizes taken at fine and coarse levels, respectively.
Assume that $\tauHl < \beta_H^{-1}$ and that $\tauhk < \beta_{h}^{-1}$ and denote $\bar{\tau}_h = \sup_{k} \taubarh$.
Then the sequence $(\ehk)_{k\in \mathbb{N}}$ in $\RR^{N_h}$ generated by Algorithm \ref{alg:MMiFB_general} is defined as:
\begin{equation}
\ehk = \tauhk \left( \nabla \Lo_h (\yhk) - \nabla \Lo_h (\bar{y}_{h,k}) + (\tauhk)^{-1}\bar{\tau}_{h,k} \IHh (\xHm-\xHO) \right), 
\end{equation}
if a coarse correction has been computed, and  $\ehk = 0$ otherwise. This sequence is such that  $\sum_{k\in \mathbb{N}} k\Vert \ehk \Vert < +\infty$.
\begin{proof}
We are not concerned with the proximity operator (backward step) in Equation \eqref{eq:typ_mult_step} so we focus on the forward step. Considering $\nabla \Lo_h \left(\bar{y}_{h,k}\right) = \nabla \Lo_h \left(\bar{y}_{h,k}\right) - \nabla \Lo_h \left(\yhk \right) + \nabla \Lo_h \left(\yhk\right)$ and $\bar{y}_{h,k} = \bar{y}_{h,k} + \yhk - \yhk$, the forward step can be rewritten as: 
\begin{equation*}
\bar{y}_{h,k} - \tauhk\nabla \Lo_h (\bar{y}_{h,k})  = 
\yhk - \tauhk \nabla \Lo_h (\yhk)  + \tauhk \left( \nabla \Lo_h (\yhk) - \nabla \Lo_h (\bar{y}_{h,k}) + \frac{1}{\tauhk} (\bar{y}_{h,k}-\yhk) \right).
\end{equation*}
And so, each time a multilevel step is performed, it induces at iteration $k$, an error that reads:
\begin{equation*}
\ehk = \tauhk \left( \nabla \Lo_h (\yhk) - \nabla \Lo_h (\bar{y}_{h,k}) + (\tauhk)^{-1}\bar{\tau}_{h,k} \IHh (\xHm-\xHO) \right). 
\end{equation*}
Now, assuming that we use inertial inexact proximal gradient steps at the coarse level, the corresponding minimization verifies Assumption \ref{def:coarse_operator} on the decrease of $F_H$. It also produces bounded sequences if constructed according to the rules of \cite[Definition 3.1, Theorem 4.1]{aujol2015} as the sequences $(\xHl)_{k \in \mathbb{N},\ell \in \mathbb{N}^*}$ converge. 
The sequence $(\ehk)_{k\in \mathbb{N}}$ has at most $p$ non zero bounded terms, as shown below:
\begin{align}
    \tauhk^{-1}\Vert \ehk \Vert & = \Vert \nabla \Lo_h (\yhk) - \nabla \Lo_h (\bar{y}_{h,k}) + (\tauhk)^{-1}\bar{\tau}_h \IHh (\xHm-\xHO) \Vert \\
    & \leq \beta_h \bar{\tau}_h \Vert \IHh (\xHm-\xHO) \Vert + (\tauhk)^{-1} \bar{\tau}_h \Vert\IHh (\xHm-\xHO) \Vert \\
    & \leq \bar{\tau}_h\left(\beta_h +\frac{1}{\tauhk}\right)\Vert\IHh (\xHm-\xHO) \Vert.
\end{align}
The second inequality is deduced from the fact that $\Lo_h$ has a $\beta_h$-Lipschitz gradient and that $\bar{y}_{h,k} -\yhk = \bar{\tau}_{h,k} \IHh (\xHm-\xHO)$. 
Finally as $(\Vert \xHO - s_{H,k,m}\Vert)_{k \in \mathbb{N}}$ is bounded, we have:
\begin{align}
    \tauhk^{-1}\Vert \ehk \Vert & \leq \bar{\tau}_h\left(\beta_h +\frac{1}{\tauhk}\right)\sup_{k \in \mathbb{N}}\Vert\IHh (\xHm-\xHO) \Vert < +\infty. 
\end{align}
\end{proof}
\vspace{-0.3cm}
\end{lemma}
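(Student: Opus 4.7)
The plan is to reformulate one multilevel iteration of Algorithm \ref{alg:MMiFB_general} as a standard inexact fine-level forward step plus an additive error term $\ehk$, and then to bound that error term using the $\beta_h$-Lipschitz continuity of $\nabla \Lo_h$ together with Assumption \ref{def:coarse_operator}.

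First, I would derive the explicit form of $\ehk$. Starting from the forward step on the corrected iterate $\bar{y}_{h,k} = \yhk + \taubarh \IHh(\xHm - \xHO)$, I would add and subtract $\yhk$ and $\tauhk \nabla \Lo_h(\yhk)$ so as to rewrite
\begin{equation*}
\bar{y}_{h,k} - \tauhk \nabla \Lo_h(\bar{y}_{h,k}) \;=\; \yhk - \tauhk \nabla \Lo_h(\yhk) + \ehk,
\end{equation*}
which, after grouping the remaining terms, gives exactly the announced expression for $\ehk$. This is an algebraic rearrangement and serves only to make the multilevel update look like a perturbed fine-level forward step.

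Next, I would estimate $\|\ehk\|$ when a coarse correction is actually performed. Using the triangle inequality inside the parenthesis, the $\beta_h$-Lipschitz property of $\nabla \Lo_h$, and the identity $\bar{y}_{h,k} - \yhk = \taubarh \IHh(\xHm - \xHO)$, I get
\begin{equation*}
\tauhk^{-1}\|\ehk\| \;\leq\; \bar{\tau}_h\!\left(\beta_h + \tfrac{1}{\tauhk}\right)\|\IHh(\xHm-\xHO)\|.
\end{equation*}
To conclude that this right-hand side is uniformly bounded in $k$, I would invoke Assumption \ref{def:coarse_operator}, which explicitly requires that $\xHm-\xHO$ be bounded; this assumption is satisfied whenever $\Phi_{H,\bullet}$ is chosen as an inertial inexact proximal-gradient scheme on the coarse level, since then the coarse sequence $(\xHl)_\ell$ converges by \cite[Definition 3.1, Theorem 4.1]{aujol2015}, so $\sup_k \|\IHh(\xHm - \xHO)\| < +\infty$ by linearity and continuity of $\IHh$.

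Finally, I would observe that the summability claim $\sum_{k} k\|\ehk\| < +\infty$ is obtained almost for free from the hypothesis that at most $p$ coarse corrections are ever performed. Indeed, the definition of $\ehk$ sets it to zero whenever no correction is computed, so the sequence $(\ehk)_k$ has at most $p$ non-zero terms, each bounded by the uniform estimate above; the series $\sum_k k\|\ehk\|$ therefore reduces to a finite sum of finite quantities. The main (mild) obstacle is really just keeping track of the algebraic rewriting and making sure the uniform bound on $\|\IHh(\xHm-\xHO)\|$ is justified by Assumption \ref{def:coarse_operator}; no delicate analytical argument is needed because the finite-$p$ hypothesis trivializes the summability.
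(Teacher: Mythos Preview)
Your proposal is correct and follows essentially the same argument as the paper's proof: the same algebraic rewriting of the forward step to isolate $\ehk$, the same Lipschitz-plus-triangle-inequality bound yielding $\tauhk^{-1}\|\ehk\|\le \bar{\tau}_h(\beta_h+\tauhk^{-1})\|\IHh(\xHm-\xHO)\|$, and the same appeal to Assumption~\ref{def:coarse_operator} and the finite-$p$ hypothesis to conclude summability.
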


\paragraph{Inexactness due to approximation of the proximity operator} \label{sec:approx_prox} To account for  inexactness in the proximity operator computation, one needs to enlarge the notion of subdifferential through the following definition \cite{aujol2015}:
\begin{definition}{\emph{($\epsilon$-subdifferential)}}
The $\epsilon$-subdifferential of $\R$ at $z \in $ dom $\R$ is defined as:
\begin{equation}
    \partial_{\epsilon}\R(z) = \{ y \in \RR^N \;|\; \R(x) \geq \R(z) + \langle x-z,y \rangle - \epsilon, \forall x \in \RR^N \}.
\end{equation}
\end{definition}

\noindent Based on this definition, three types of approximations of proximity operators are proposed.
\begin{definition}{\emph{(Type $0$ approximation \cite{combettes2005})}.} %
We say that $z \in \RR^N$ is a type $0$ approximation of $\prox_{\gamma \R}(y)$ with precision $\epsilon$, and we write $z \approx_{0,\epsilon}\prox_{\gamma \R}(y)$, if and only if:
\begin{equation}
    \Vert z - \prox_{\gamma \R}(y) \Vert \leq \sqrt{2 \gamma\epsilon}.
\end{equation}
\end{definition}

\begin{definition}{\emph{(Type $1$ approximation \cite{villa2013})}.} 
We say that $z \in \RR^N$ is a type $1$ approximation of $\prox_{\gamma \R}(y)$ with  precision $\epsilon$, and we write $z \approx_{1,\epsilon} \prox_{\gamma \R}(y)$, if and only if:
\begin{equation}
     0 \in \partial_{\epsilon} \left(\R(z) + \frac{1}{2\gamma}\Vert z - y \Vert^2\right).
\end{equation}
\end{definition}
\begin{definition}{(\emph{Type $2$ approximation \cite{villa2013})}.}
We say that $z \in \RR^N$ is a type $2$ approximation of $\prox_{\gamma \R}(y)$ with  precision $\epsilon$, and we write $z \approx_{2,\epsilon}\prox_{\gamma \R}(y)$, if and only if:
\begin{equation}
     \gamma^{-1}(y-z) \in \partial_{\epsilon} \R(z).
\end{equation}
\end{definition}
\noindent Approximation of type 2 implies approximation of type 1 \cite{villa2013,aujol2015} and under some conditions discussed in \cite{villa2013}, approximation of type 0 implies approximation of type 2.

When these approximations are used in forward-backward-based algorithms, convergence guarantees are known from the literature: approximations of type $1$ and $2$ are covered by \cite{aujol2015} for inertial versions of the forward-backward algorithm, while the type 0 approximation is treated in \cite{combettes2005} only for the forward-backward algorithm.  Typical cases of image restoration, where dual optimization is used, are based on approximations of type 2 (see Section \ref{sec:MMiFB_image_restoration}). 

The type of chosen approximation defines how the sequence $(\epsilon_{h,k})_{k\in\mathbb{N}}$ will be summable against $k^{2d}$ and thus, it does not depend on the multilevel framework.
\paragraph{Convergence of Algorithm \ref{alg:MMiFB_general}} We now discuss the convergence of our algorithm for the three types of approximation of the proximity operator. 

We first consider a standard inexact forward-backward with a finite number of multilevel coarse corrections.
\begin{theorem}[Approximation of Type $0$] 
\label{th:convergence_0}
Let us suppose in Algorithm \ref{alg:MMiFB_general} that $\forall k \in \mathbb{N}^*,~\alpha_{h,k} = 0$ at step  \ref{alg_step:extrapolation}, that the assumptions of Lemma \ref{lemma:correction} hold, and that the sequence $(\epsilon_{h,k})_{k\in \mathbb{N}}$ is such that $\sum_{k\in \mathbb{N}}\sqrt{\Vert \epsilon_{h,k} \Vert}< + \infty$. Set $x_{h,0} \in \RR^{N_h}$ %
and choosing approximation of Type 0,  the sequence $(\xhk)_{k\in\mathbb{N}}$ converges to a minimizer of $F_h$.
\begin{proof}
The proof stems from Theorem 3.4 in \cite{combettes2005} applied to the defined sequence.
\end{proof}
\end{theorem}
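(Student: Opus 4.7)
My strategy is to recognize that with $\alpha_{h,k}=0$ the inertial extrapolation collapses ($\yhkun = \xhkun$), so Algorithm \ref{alg:MMiFB_general} reduces to a perturbed inexact forward–backward iteration on $F_h$, and then apply \cite[Theorem 3.4]{combettes2005} directly. The work is to package the multilevel corrections as a summable gradient perturbation and the Type 0 error as a summable approximation error in the exact sense required by that theorem.

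\textbf{Step 1: Rewriting the iteration.} Setting $\alpha_{h,k}=0$ throughout forces $\yhk = \xhk$ for every $k$, so step \ref{alg_step:FB} combined with the multilevel update \ref{alg_step:prolongation} can be written, using the calculation of Lemma \ref{lemma:correction}, as
\begin{equation*}
\xhkun \;\approx_{0,\epsilon_{h,k}}\; \prox_{\tauhk \R_h}\!\bigl(\xhk - \tauhk \nabla \Lo_h(\xhk) + \ehk\bigr),
\end{equation*}
where $\ehk$ is the multilevel error identified in Lemma \ref{lemma:correction} (with $\ehk=0$ whenever no coarse correction is performed at iteration $k$). This is exactly the format of a forward–backward iteration with perturbation on the gradient step and a Type 0 approximation on the proximal step.

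\textbf{Step 2: Verifying the hypotheses of the cited theorem.} The convergence result \cite[Theorem 3.4]{combettes2005} requires three ingredients: (i) the step sizes $\tauhk$ are bounded away from $0$ and $2/\beta_h$; (ii) the gradient perturbation is absolutely summable, $\sum_k \Vert \ehk \Vert < +\infty$; and (iii) the proximal approximation errors are summable in the Type 0 sense, $\sum_k \sqrt{2\tauhk\, \epsilon_{h,k}} < +\infty$. Item (i) is ensured by the standing assumption $\tauhk < \beta_h^{-1}$ in Lemma \ref{lemma:correction} (taken uniformly bounded from below in practice). Item (ii) is immediate from the global cap of at most $p$ coarse corrections in Algorithm \ref{alg:MMiFB_general}: the sequence $(\ehk)_{k\in\mathbb{N}}$ has at most $p$ nonzero terms and Lemma \ref{lemma:correction} shows each is bounded, so the sum is trivially finite (and in particular $\sum_k k \Vert \ehk \Vert < +\infty$, which is much stronger). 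Item (iii) follows from the theorem's hypothesis $\sum_k \sqrt{\Vert \epsilon_{h,k} \Vert} < +\infty$ combined with the boundedness of $\tauhk$.

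\textbf{Step 3: Conclusion.} Once (i)–(iii) are in place, \cite[Theorem 3.4]{combettes2005} yields that the sequence $(\xhk)_{k\in\mathbb{N}}$ converges to a point in $\Argmin F_h$, which is nonempty since $F_h$ is convex, coercive, l.s.c.\ and proper. Because $\yhk = \xhk$, the same conclusion holds for $(\yhk)$.

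\textbf{Expected difficulty.} The proof itself is short — the substantive work has already been done in Lemma \ref{lemma:correction}, whose bound on $\Vert \ehk \Vert$ relies crucially on $\Lo_h$ having a Lipschitz gradient, on the boundedness of $\xHm - \xHO$ granted by Assumption \ref{def:coarse_operator}, and on the cap $r<p$ on the number of coarse corrections. The only subtle point is verifying that the Type 0 summability assumption from \cite{combettes2005} is genuinely implied by our hypothesis $\sum_k \sqrt{\Vert\epsilon_{h,k}\Vert} < +\infty$, which is straightforward once $\tauhk$ is uniformly bounded above. No extra ingredient is needed beyond what is already established.
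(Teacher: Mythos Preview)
Your proposal is correct and follows exactly the same route as the paper: both reduce the $\alpha_{h,k}=0$ case to the inexact forward--backward framework of \cite[Theorem 3.4]{combettes2005}, using Lemma \ref{lemma:correction} to handle the multilevel perturbation $\ehk$. The paper's proof is a single sentence invoking that theorem, whereas you have (helpfully) spelled out the verification of its hypotheses explicitly.
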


\begin{theorem}[Approximations of Type $1$ and Type $2$]
\label{th:convergence_12}
Let us suppose in Algorithm \ref{alg:MMiFB_general}, that  $\forall k\in \mathbb{N}^*,~t_{h,k+1}=\left(\frac{k+a}{a}\right)^d$, with $(a,d)$ satisfying the conditions in \cite[Definition 3.1]{aujol2015}, and that the assumptions of Lemma \ref{lemma:correction} hold. Moreover, if we assume that:
\begin{itemize}
    \item[] $\sum_{k=1}^{+\infty}k^d\sqrt{\epsilon_{h,k}} < + \infty$ in the case of Type 1 approximation,
    \item[] $\sum_{k=1}^{+\infty}k^{2d}\epsilon_{h,k} < + \infty$ in the case of Type 2 approximation,
\end{itemize}
then, we have that:
\begin{itemize}
    \item[-] The sequence $(k^{2d}\left(F_h(\xhk)-F_h(x^*)\right))_{k \in \mathbb{N}}$ belongs to $\ell_{\infty}(\mathbb{N})$.
    \item[-] The sequence $(x_{h,k})_{k \in \mathbb{N}}$ %
    converges to a minimizer of $F_h$.
\end{itemize}

\begin{proof}
\cite[Theorem 3.5, 4.1, and Corollary 3.8]{aujol2015} with Lemma  \ref{lemma:correction} yield the desired result.
\end{proof}
\end{theorem}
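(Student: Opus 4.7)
The plan is to reduce the statement to \cite[Theorems 3.5 and 4.1, Corollary 3.8]{aujol2015}, by recognizing that the iterates of Algorithm \ref{alg:MMiFB_general} fit into the inexact inertial forward-backward framework studied there, with an error $\ehk$ on the gradient coming from the multilevel corrections and an error $\epsilon_{h,k}$ on the proximity operator of type $1$ or $2$.

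First, I would rewrite one iteration of the algorithm in the form
\begin{equation*}
\xhkun \approx_{i,\epsilon_{h,k}} \prox_{\tauhk \R_h}\bigl(\yhk - \tauhk \nabla \Lo_h(\yhk) + \ehk\bigr), \qquad \yhkun = \xhkun + \alpha_{h,k}(\xhkun-\xhk),
\end{equation*}
which is exactly the scheme considered in \cite{aujol2015}. This rewriting is the one already performed in the proof of Lemma \ref{lemma:correction}: adding and subtracting $\nabla \Lo_h(\yhk)$ in the forward step taken at $\bar{y}_{h,k}$ and absorbing the prolongated coarse correction $\taubarh\IHh(\xHm-\xHO)$ into the perturbation $\ehk$.

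Second, I would verify the hypotheses of the Aujol--Dossal theorems. The inertial parameters produced by $t_{h,k+1}=((k+a)/a)^d$ satisfy \cite[Definition 3.1]{aujol2015} thanks to the assumption on $(a,d)$. The gradient perturbation satisfies $\sum_{k\in\mathbb{N}} k\Vert\ehk\Vert <+\infty$ by Lemma \ref{lemma:correction}; this is even stronger than what \cite{aujol2015} requires, since $\ehk$ is non-zero for at most $p$ indices $k$ and each such term is bounded. The approximation errors $(\epsilon_{h,k})_k$ on the proximity operator satisfy the summability conditions $\sum_{k} k^d\sqrt{\epsilon_{h,k}}<+\infty$ for type $1$ and $\sum_{k} k^{2d}\epsilon_{h,k}<+\infty$ for type $2$ by the hypotheses of the theorem, which match exactly those imposed in \cite[Theorem 3.5 and Corollary 3.8]{aujol2015}.

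With these hypotheses in place, the two conclusions follow directly: \cite[Theorem 3.5]{aujol2015} yields $\bigl(k^{2d}(F_h(\xhk)-F_h(x^*))\bigr)_{k\in\mathbb{N}} \in \ell_\infty(\mathbb{N})$, and \cite[Corollary 3.8]{aujol2015} combined with \cite[Theorem 4.1]{aujol2015} gives the convergence of $(\xhk)_{k\in\mathbb{N}}$ to a minimizer of $F_h$. The main conceptual work has already been absorbed in Lemma \ref{lemma:correction}: showing that finitely many bounded multilevel corrections produce a gradient error with enough summability to fit into the Aujol--Dossal framework. The only real obstacle is a careful bookkeeping of the two sources of inexactness and of the summability conditions, not a new technical argument.
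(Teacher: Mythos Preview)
Your proposal is correct and follows exactly the paper's approach: the paper's proof is the single sentence ``\cite[Theorem 3.5, 4.1, and Corollary 3.8]{aujol2015} with Lemma~\ref{lemma:correction} yield the desired result,'' and you have simply unpacked this citation by spelling out the rewriting of the iteration as an inexact inertial forward-backward step (done in Lemma~\ref{lemma:correction}) and checking the summability hypotheses on $\ehk$ and $\epsilon_{h,k}$ required by the Aujol--Dossal results.
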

Theorem \ref{th:convergence_0} and \ref{th:convergence_12} generalize convergence results previously obtained in \cite[Theorem 1]{lauga2022_1}.
\noindent When $\epsilon_{h,k} = 0$ for all $k$, we recover the convergence result obtained in \cite[Theorem 1]{lauga2022} for exact proximity operators.

\subsection{Extension to the multilevel case}
Extending these convergence results to more than two levels is straightforward. If the algorithm is used on $J$ levels, we just have to apply the analysis derived above to each pair of consecutive levels. Then, recursively, showing that the coarsest level produces a bounded coarse correction will ensure that the upper finer level will converge to one of its minimizers, producing in turn a bounded coarse correction for the next upper finer level, and so on.
\paragraph{Defining the coarse cycles}
We use the following notation for the multilevel  schemes. If the dimension of the problem at fine level is $N_h = (2^{J})^2$, following the classical wavelet nomenclature, we index with $J$ the finest level. So, for an image of size $1024 \times 1024$, $J=10$. The coarse levels are then associated to $J-1$, $J-2$, $J-3$, etc. We use V-cycles \cite{briggs2000}, as depicted in  Figure \ref{fig:v-cycle}. 
\begin{figure}
    \centering
    \includegraphics[width = 0.5\textwidth]{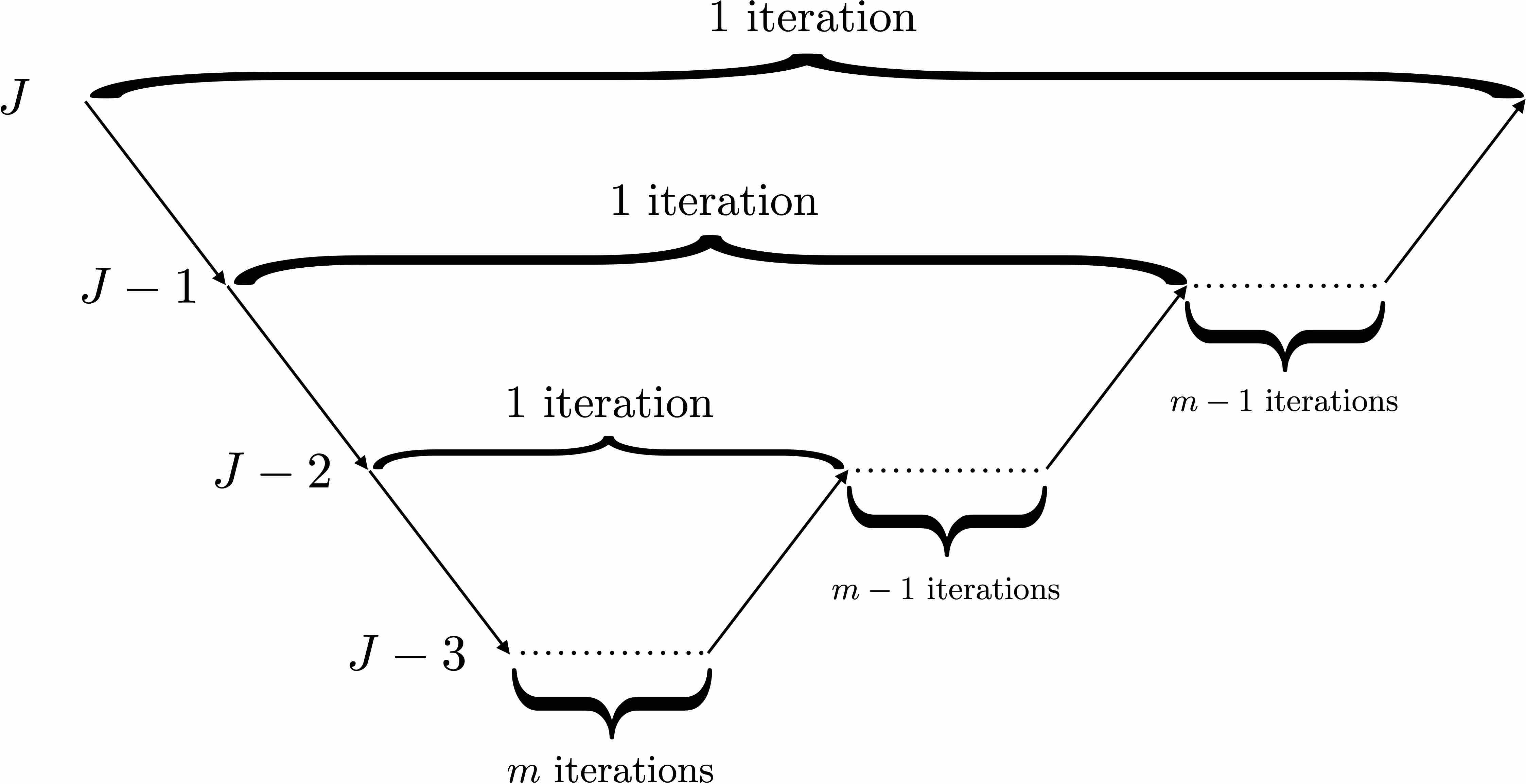}
    \caption{Scheme of a typical V-cycle for a multilevel algorithm with $4$ levels and $3$ coarse levels. When we use $p$ times the coarse model, we repeat $p$ times this V-cycle scheme. \vspace{-0.5em}}
    \label{fig:v-cycle}
\end{figure}

\section{IML FISTA for image reconstruction}
\label{sec:MMiFB_image_restoration}

In this section we adapt our Inexact MultiLevel FISTA to image reconstruction problems in the framework of Problem \eqref{eq:optim_article}. We present our problem in a multilevel context, then we propose CIT operators designed for image reconstruction problems and we derive the construction of a good coarse model through a specific choice of smoothing. Finally, we detail the computation of the proximity operator of $g_h \circ \D_h$.
\subsection{Definition of the problem at fine level}
Let us specify Problem \eqref{eq:optim_article} to the specific context of image restoration in multilevel notations:
\begin{equation}\label{eq:ml_pb}
    \widehat{x} \in \Argmin_{x_h \in \RR^{N_h}} F_h(x):=f_h(\A_h x_h) + g_h(\D_h\vspace{0.1em}x_h)\vspace{-0.5em}
\end{equation} 
with $A_h\in\RR^{M_h\times N_h}$ and $\D_h \in \RR^{(N_h \times \widetilde{K})\times N_h}$ ($\widetilde{K},M_h>0$). The parameter $\tilde{K}$ expresses the fact that operator $D_h$ can map $x_h$ to a higher dimensional space, e.g.  $\widetilde{K}=2$ for Total Variation penalization. In this expression, $x_h = \left(x_h^i\right)_{1\leq i \leq N_h}$ is the vectorized version of an image $X_h$ of $N_{h,r}$ rows and $N_{h,c}$ columns, and where each pixel corresponds to a vector of $C\geq 1$ components (e.g. $C=3$ for the RGB bands of a color image). Hence, we have $N_h = N_{h,r} \times N_{h,c} \times C$.
In the following, as the operators we deal with, apply separately to each channel, for the sake of clarity and without loss of generality, we present their construction for grayscale images corresponding to $C=1$.
\subsubsection{Examples of data fidelity term $f_h\circ\A_h$}
\paragraph{Deblurring problem} When the degradation of the image corresponds to a blurring effect, the operator $\A_h$ is a convolution matrix built from a two dimensional Point Spread Function (PSF).
As it is the case for Gaussian blurs, the PSF function takes often the form of a separable kernel (horizontally and vertically) and $\A_h$ can be decomposed into a Kronecker product:
\begin{equation}
    \A_h = \A_{h,r} \otimes \A_{h,c} 
\end{equation} 
with $\A_{h,r} \in \mathbb{R}^{N_{h,c}\times N_{h,c}}$ and $\A_{h,c} \in \mathbb{R}^{N_{h,r}\times N_{h,r}}$. From the numerical viewpoint, this Kronecker decomposition is particularly efficient for processing large images, and can be easily implemented with the HNO package \cite{hansen2006}.
Finally, as it is common in image restoration, the data-fidelity term is a least square regression:
\begin{equation}
    (\forall x_h \in \RR^{N_h}) \qquad f_h(\A_h x_h) = \frac{1}{2}\Vert \A_h x_h - z_h \Vert_2^2 = \frac{1}{2}\sum_{i=1}^{N_h} ((\A_h x_h)^i-(z_h)^i)^2.
    \label{eq:standard_least_squares}
\end{equation}

\paragraph{Inpainting problem} When the degraded image coincides with the original image but with potentially altered or missing pixels, the reconstruction task is called inpainting and $\A_h$ is a measurement operator that keeps a subset $I\subseteq \{1,\ldots,N_h\}$ of pixels of the image and removes the others. Here, we assume that the subset $I$ is chosen randomly. Formally $\A_h$ takes the form of a diagonal matrix with a Bernoulli random variable (zeros and ones) on its entries, and it plays the role of a mask applied to the  image $x_h$:
\begin{equation}
    \left(\A_h x_h \right)^i = \left\{\begin{array}{cc}
    x_h^i     & \mbox{if } i\in I \\
    0     & \mbox{otherwise}
    \end{array}\right.
    \label{eq:inpainting_hadamard}
\end{equation}
In this case too, the data-fidelity term is a least square regression as in Equation \eqref{eq:standard_least_squares}.

\subsubsection{Examples of regularization term $g_h\circ\D_h$}
\paragraph{Wavelet transform norm} The operator $\D_h$ associated with a wavelet transform regularization is the discrete wavelet transform operator which computes a given number of consecutive decimated low pass and high pass filtering of the image $x_h$. The classical regularization associated is the application of the $l_1$-norm on the discrete wavelet transform coefficients. Such regularization was for instance used in a multilevel framework in \cite{parpas2017,lauga2022,lauga2022_1}. 
\paragraph{Total Variation} The operator $\D_h$ associated with the Total Variation (TV) computes the first order differences between the component $i$ of $x_h$ and its
horizontal/vertical nearest neighbours ($x_h^{i_c},x_h^{i_r}$) (lower/right in the image case). It is defined such that for all $x_h \in \RR^{N_h}$, and for each pixel $i\in \{1,\ldots,N_h\}$, 
\begin{equation}
    \left(\D_h x_h\right)^{i} = \left[\begin{array}{c}
        x_h^{i}-x_h^{i_r} \, , \,
        x_h^{i}-x_h^{i_c} 
    \end{array}\right], 
\end{equation}
paying particular attention to the management of border effects.
Here $\D_h x_h$ belongs to $\RR^{N_h \times 2}$ ($\widetilde{K} = 2$). With this definition, the classical isotropic Total Variation semi-norm \cite{beck2009} reads:
\begin{equation}
    g_h(\D_h x_h) = \lambda_h \sum_{i=1}^{N_h} \Vert \left(\D_h x_h\right)^{i} \Vert_2 = \lambda_h\sum_{i=1}^{N_h} \sqrt{|x_h^{i}-x_h^{i_1}|^2 + |x_h^{i}-x_h^{i_2}|^2} = \lambda_h\Vert \D_h x_h \Vert_{2,1}
    \label{eq:total_variation_l1}
\end{equation}
with $\lambda_h>0$.

\paragraph{Non-Local Total Variation}
The operator $\D_h$ associated with the Non-Local Total Variation (NLTV) extends TV to a non local neighbourhood of the current pixel $i$. In words, it is the operator that computes the weighted differences between the current pixel $i$ of an image $x_h$ and a subset $\mathcal{N}_i$ of pixels localized near $i$.

For every $x_h \in \RR^{N_h}$, and at each pixel $i\in \{1,\ldots,N_h\}$, for some given weights $\omega^{i,j}>0$, 
\begin{equation}
    \left(\D_h x_h\right)^{i} = \left[\begin{array}{c}
        \omega^{i,j}\left(x_h^{i}-x_h^{j}\right) 
    \end{array}\right]_{j\in\mathcal{N}_i}.
\end{equation}
Here $\D_h x_h$ belongs to $\RR^{N_h \times \widetilde{K}}$ and  $\widetilde{K}$ is the cardinality of the subset $\mathcal{N}_i$. For every $i\in \{1,\ldots,N_h\}$ and $j\in\mathcal{N}_i$, the weights $\omega^{i,j}>0$ depend on the similarity (e.g., $\ell_2$ norm) between patches that are centered around components $i$ and $j$ of the image \cite{chierchia2014}. 

As for the isotropic TV semi-norm, a $\ell_p$ ($p\geq1$) based NLTV semi-norm takes the form:
\begin{equation}
    g_h(\D_h x_h) = \lambda_h\sum_{i=1}^{N_h} \Vert \left(\D_h x_h\right)^{i} \Vert_p \qquad \mbox{with} \qquad  \lambda_h>0.
    \label{eq:NL_total_variation_l1}
\end{equation}

\subsection{Information transfer for image reconstruction problems}
\label{sec:information_transfer_image}
In the context of image recons\-truction problems, we consider CIT operators that rely on wavelet bases (referred to as wavelet CIT in the following).
The idea of constructing such information transfer operators traces back to works dedicated to image deblurring problems either based on biorthogonal wavelets \cite{cheng2003} or Haar and Symlets wavelets \cite{malena2009,espanol2010,deng2012}.
Our objective is to obtain a computationally efficient coarse approximation of a vector lying in a higher resolution space, from the approximation coefficients of its discrete wavelet transform (DWT). 
We impose in this context that $N_h = (N_{h,r}\times N_{h,c})  =  (2\,N_{H,r} \times 2\,N_{H,c})= 4 \times N_H$. 
For a generic quadrature mirror filter $\textbf{q} =(q_1,\ldots,q_m)$:
\begin{align}
\IhH := \left(\Res_{\mathbf{q},r} \otimes \Res_{\mathbf{q},c}\right),
\label{eq:info_transfer_wavelet}
\end{align} 
where $\Res_{\mathbf{q},c}$ is the decimated $N_{H,r}$-by-$N_{h,r}$ matrix (every other line is kept) of the $N_{h,r}$-by-$N_{h,r}$ Toeplitz matrix generated by $\mathbf{q}$ as :
\begin{equation*}
    \left(\begin{array}{ccccccc}
    q_1 & q_2 & \ldots & q_m & 0 & \ldots & 0 \\
    0 & 0 & q_1 & q_2 & \ldots & \ldots & 0 \\
    \vdots & \ddots & \ddots & \ddots & \ddots & \ddots &  \\
    0 & \ldots & 0 & 0 & 0 & q_1 & q_2   
    \end{array}\right).
\end{equation*}
Similarly $\Res_{\mathbf{q},r}$ is the decimated $N_{H,c}$-by-$N_{h,c}$ matrix (every other line is kept) of the $N_{h,c}$-by-$N_{h,c}$ Toeplitz matrix generated by $\mathbf{q}$. For both matrices the vector $\mathbf{q}$ is completed with the right number of $0$'s to reach the size $N_{h,r}$ or $N_{h,c}$. $\IHh$ is then taken in order to satisfy Definition~\ref{def:inf_transf_op}.

\subsection{Fast coarse models for image restoration problems}
A challenging numerical problem is to keep the efficiency of matrix-vector product computation at coarse level if it exists at fine level. 
For instance, when considering convolutions, if the convolution matrix is expressed with a Kronecker product, such structure can be preserved with the right definition of operators at coarse levels. 
\paragraph{$A_H$ in the deblurring problem} Thanks to the Kronecker factorization of both $\A_h$ and $\IhH$, the coarsened operator $\A_H$  can be written as:
\begin{equation*}
    \A_H = \left(\Res_{\mathbf{q},c} A_{h,r} \Res_{\mathbf{q},c}^T \right)\otimes \left(\Res_{\mathbf{q},r} A_{h,c} \Res_{\mathbf{q},r}^T \right)
\end{equation*}
preserving the same computational efficiency.
Thus in image restoration problems where a separable blur is used, it is straightforward to design coarse operators (which can be computed beforehand) that are fast for matrix-vector products while keeping fidelity to the fine level. 

\paragraph{$A_H$ in the inpainting problem} Due to the specific diagonal form of $\A_h$, the coarsened inpainting operator $\A_H$ simply stems from decimating the rows and the columns of $\A_h$ by a factor 2. $\A_H \in \RR^{N_H \times N_H}$ remains a diagonal indicator matrix of a pixel subset $J \subseteq \{1,\ldots,N_H\}$ acting as a mask on the coarse image: %
\begin{equation*}
    (\A_H x_H)^j = \left\{ \begin{array}{cc}
     x^j_H  & \text{if } j \in J \\
     0    & \text{otherwise}
    \end{array}\right.
\end{equation*}

\paragraph{Examples of operators $\D_H$} For the regularization operators, the construction is simpler. For both TV and NLTV, we use the same hyper-parameters (maximum number of patches, size of patches,  computation of similarity between patches, etc.) for $\D_H$ as for $\D_h$. Adapting these parameters to current resolution could be worth investigating. However, due to the limited size of the chosen patches, we believe it would lead to marginal improvements. %
$\D_H$ is thus playing the same role as $\D_h$ but for images of size $N_H$. 
Here $\D_H x_H$ belongs to $\RR^{N_H \times \widetilde{K}}$.

\subsection{Coarse model construction}\label{sec:corse_model_image}
For the coarse model it is natural in this context to choose $\Lo_H$ and $\R_H$ as 
\begin{align*}
    \Lo_H= f_H\circ\A_H, \quad \R_H = g_H\circ \D_H,
\end{align*}
where $\A_H$, $\D_H$ are defined as described above and $f_H$, $g_H$ are the restrictions of $f_h$ and $g_h$ to a subspace of reduced dimension. 
We then have:
\begin{equation*}
\begin{split}
    (\forall x_H \in \RR^{N_H}), & ~~f_H(\A_H x_H) = \frac{1}{2}\sum_{i=1}^{N_H} ((\A_H x_H)^i-(z_H)^i)^2,
    \label{eq:standard_least_squaresH} \\
    & ~~g_H(\D_H x_H) = \lambda_H\sum_{i=1}^{N_H}\Vert(\D_H x_H)^i \Vert_p.
\end{split}
\end{equation*}
Ideally, in order to speed up the computations, one would like to choose an approximation $\R_H$ whose proximity operator is known under closed form, even when $\R_h$ does not possess this desirable property. However, we have seen in our experiments that choosing an $\R_H$ not faithful to $\R_h$ deteriorates the performance of the multilevel algorithm (for instance, when $\R_h$ is the TV based norm, choosing a Haar wavelet based norm for $\R_H$ is sub-optimal, even though there is a link between Haar wavelet and total variation thresholdings. \cite{steidl2004,kamilov2012}).

This motivates the construction presented in Section \ref{sec:ml_framework} that we adapt here to our problem: we replace $\R_h$ and $\R_H$ by their corresponding smooth Moreau envelopes, which possess several interesting properties.
\begin{definition}{\emph{(Moreau envelope)}.} Let $\gamma>0$ and $\R\colon\RR^N \to(-\infty,+\infty]$ a convex, lower semi-continuous, and proper function. The Moreau envelope of $\R$, denoted by $\leftidx{^{\gamma}}{\R}$, is the convex, continuous, real-valued function defined by
\begin{equation}
    \leftidx{^{\gamma}}{\R} = \inf_{y \in \RR^N} \R(y) + \frac{1}{2 \gamma} \Vert \cdot -y \Vert^2.
\end{equation}
$\leftidx{^{\gamma}}{\R}$ can be expressed explicitly with $\prox_{\gamma \R}$ \cite[Remark 12.24]{bauschke2017} as follows:
\begin{equation*}
    \leftidx{^{\gamma}}{\R}{}(x) = \R(\prox_{\gamma \R}(x)) + \frac{1}{2\gamma} \Vert x - \prox_{\gamma \R}(x) \Vert^2.
\end{equation*}
Moreover, $\leftidx{^{\gamma}}{\R}$ is Fréchet differentiable on $\RR^N$, and its gradient is $\gamma^{-1}$-Lipschitz and such that %
\cite[Prop. 12.30]{bauschke2017}
\begin{equation}
    \nabla(\leftidx{^{\gamma}}{\R}) = \gamma^{-1}(\Id- \prox_{\gamma \R}).
    \label{gradient_moreau}
\end{equation}
\end{definition}
However, the last equation is not directly applicable because we assumed that the proximity operator of $g \circ \D$ had no explicit form.
Therefore, instead of directly using the Moreau envelope of $R$, we first compute the Moreau envelope of $g$ and compose it with $\D$. This smoothing satisfies Definition \ref{def:smooth_convex_func} :
\begin{lemma}
$\leftidx{^{\gamma}}{g} \circ \D$ is a smoothed convex function approximating $g \circ \D$ in the sense of Definition 
\ref{def:smooth_convex_func}.
\begin{proof}
    Remark that $\leftidx{^{\gamma}}{g}$ is a smooth convex function in the sense of Definition \ref{def:smooth_convex_func} \cite{beck2012}. By \cite[Lemma 2.2]{beck2012}, the fact that $\leftidx{^{\gamma}}{g} \circ \D$ is a smooth function applied to a linear transformation concludes the proof. 
\end{proof}
\end{lemma}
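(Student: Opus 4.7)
The plan is to split the argument into two clean pieces that mirror the structure of the statement: first show that the Moreau envelope $\leftidx{^{\gamma}}{g}$ is itself a smoothed convex approximation of $g$ in the sense of Definition \ref{def:smooth_convex_func}, then invoke the stability of this property under precomposition with the linear operator $\D$. This two-step structure is precisely what makes the proof short: all the real work sits in the first step, and the second step is a compatibility lemma already available in the literature (\cite[Lemma 2.2]{beck2012}).

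For the first step, I would verify the three requirements of Definition \ref{def:smooth_convex_func} one by one. Continuous differentiability of $\leftidx{^{\gamma}}{g}$ is immediate from the identity $\nabla(\leftidx{^{\gamma}}{g}) = \gamma^{-1}(\Id - \prox_{\gamma g})$ stated just above the lemma, together with the non-expansiveness of the proximity operator. The upper bound $\leftidx{^{\gamma}}{g}(u) \leq g(u)$ follows by choosing $y = u$ in the infimum defining $\leftidx{^{\gamma}}{g}$, which gives the constant $\eta_2 = 0$. For the lower bound, I would use the standard Moreau envelope estimate: taking $y^* = \prox_{\gamma g}(u)$ and bounding $g(u) - \leftidx{^{\gamma}}{g}(u) = g(u) - g(y^*) - \tfrac{1}{2\gamma}\Vert u - y^*\Vert^2$, so that under the assumptions put forward in \cite{beck2012} (e.g.\ Lipschitz continuity of $g$ or boundedness of a suitable quantity), one obtains a bound of the form $g(u) - \leftidx{^{\gamma}}{g}(u) \leq \eta_1 \gamma$ with $\eta_1 > 0$, finite. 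This gives $\eta_1 + \eta_2 > 0$ as required.

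For the second step, the observation is that Definition \ref{def:smooth_convex_func} is a pointwise condition, so evaluating the inequalities at $\D x$ rather than $u$ yields
\begin{equation*}
g(\D x) - \eta_1 \gamma \;\leq\; \leftidx{^{\gamma}}{g}(\D x) \;\leq\; g(\D x) + \eta_2 \gamma,
\end{equation*}
with the same constants. Continuous differentiability of $\leftidx{^{\gamma}}{g}\circ \D$ is a direct consequence of the chain rule, since $\D$ is linear and $\leftidx{^{\gamma}}{g}$ is $C^1$. This is exactly the content of \cite[Lemma 2.2]{beck2012}, which I would invoke to conclude.

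The only delicate point, and what I would flag as the main obstacle, is the first step: verifying that the particular $g$ used in the image restoration problems of Section \ref{sec:MMiFB_image_restoration} (e.g.\ the $\ell_{2,1}$ norm driving isotropic TV, or the $\ell_p$-based NLTV penalty) satisfies the regularity assumptions under which $g - \leftidx{^{\gamma}}{g} \leq \eta_1 \gamma$ holds with a finite $\eta_1$. For these concrete instances the relevant bound can be obtained from the Lipschitz continuity of $g$ (norms are $1$-Lipschitz up to the regularization parameter $\lambda_h$), giving an explicit $\eta_1$ proportional to $\lambda_h^2$; once this is in place, the composition step and the continuous differentiability are routine.
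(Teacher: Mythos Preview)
Your proposal is correct and follows exactly the same two-step structure as the paper's proof: first establish that the Moreau envelope $\leftidx{^{\gamma}}{g}$ is a smoothed convex approximation of $g$ (the paper simply cites \cite{beck2012} for this, while you unpack the three conditions explicitly), then invoke \cite[Lemma 2.2]{beck2012} for stability under precomposition with the linear map $\D$. The only difference is that you spell out the details behind the citations, including the concern about Lipschitz continuity of $g$ in the concrete image-restoration instances, which the paper leaves implicit.
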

This smooth approximation has the following interesting property:
\begin{lemma}{\emph{\cite[Lemma 3.2]{luu2017}}}
For any $x \in \RR^N$, $\D: \RR^N \rightarrow \RR^K$ and $g: \RR^K \rightarrow \RR$ a convex, l.s.c., and proper function, we have that: 
\begin{equation}
    \nabla \left( \leftidx{^{\gamma}}{g} \circ \D \right)(x) = \gamma^{-1} \D^* \left(\D\vspace{0.1em}x - \prox_{\gamma g}(\D\vspace{0.1em}x) \right).\vspace{-0.5em}
\end{equation}
\end{lemma}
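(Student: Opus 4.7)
The plan is to reduce the statement to two ingredients already stated in the excerpt: (i) the chain rule for a smooth function composed with a linear map, and (ii) the closed-form expression for the gradient of the Moreau envelope given in equation \eqref{gradient_moreau}. Both are available as building blocks, so the argument should be short.

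First I would invoke differentiability. The Moreau envelope $\leftidx{^{\gamma}}{g}$ of the convex, l.s.c., proper function $g$ is Fréchet differentiable on $\RR^K$, as recalled in the definition immediately above the lemma (citing \cite[Prop.~12.30]{bauschke2017}). Since $\D$ is linear (hence $\mathcal{C}^\infty$), the composition $\leftidx{^{\gamma}}{g}\circ \D$ is Fréchet differentiable on $\RR^N$, and the standard chain rule for the composition of a differentiable function with a linear map yields
\begin{equation*}
    \nabla\left(\leftidx{^{\gamma}}{g}\circ \D\right)(x) = \D^{*}\,\nabla\!\left(\leftidx{^{\gamma}}{g}\right)(\D x).
\end{equation*}

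Next I would substitute the explicit form of $\nabla(\leftidx{^{\gamma}}{g})$. Applying \eqref{gradient_moreau} to $g$ at the point $\D x \in \RR^K$ gives
\begin{equation*}
    \nabla\!\left(\leftidx{^{\gamma}}{g}\right)(\D x) = \gamma^{-1}\bigl(\D x - \prox_{\gamma g}(\D x)\bigr).
\end{equation*}
Plugging this into the previous display and using linearity of $\D^{*}$ produces exactly
\begin{equation*}
    \nabla\left(\leftidx{^{\gamma}}{g}\circ \D\right)(x) = \gamma^{-1}\,\D^{*}\bigl(\D x - \prox_{\gamma g}(\D x)\bigr),
\end{equation*}
which is the claimed identity.

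There is no real obstacle here: the only subtlety worth mentioning is that the chain rule is being applied to $\leftidx{^{\gamma}}{g}$, whose Fréchet differentiability (and not just subdifferentiability) on the entire space is essential; this is precisely why we smoothed $g$ before composing with $\D$. Everything else is a direct substitution, and the result coincides with \cite[Lemma~3.2]{luu2017}.
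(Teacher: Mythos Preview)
Your proof is correct. The paper does not actually supply its own proof of this lemma; it simply cites \cite[Lemma~3.2]{luu2017} and states the result without argument. Your two-step derivation (chain rule for the composition of a Fr\'echet differentiable function with a linear map, followed by substitution of the gradient formula~\eqref{gradient_moreau} for the Moreau envelope) is exactly the standard way to establish this identity and is precisely what the cited reference does as well.
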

This means that an explicit form of $\prox_{\gamma_h g_h}$ is sufficient to express the gradient of $\leftidx{^{\gamma_h}}{g}_h(\D_h \cdot)$. Accordingly, we define the following coarse model, 
where the first order coherence is enforced between the two objective functions,  smoothed similarly at fine and coarse levels:
\begin{definition}
A coarse model for the image restoration problem \eqref{eq:optim_article} is defined at iteration $k$ of a multilevel algorithm as:
\begin{equation}
    F_{H}(s_H) = (f_H\circ \A_H) (s_H) + \left(\leftidx{^{\gamma_H}}{g}_H \circ \D_H \right)(s_H) + \langle \vHk, s_H \rangle,
\end{equation}
where $\vHk$ will be set to:
\begin{equation*}
\label{eq:v_Hk_inexact1}
\vHk =  \IhH \left[\left(\nabla (f_h\circ\A_h) + \nabla (\leftidx{^{\gamma_h}}{g}{}_h\circ \D_h\right)(\yhk)\right]  -(\nabla (f_H\circ\A_H) + \nabla(\leftidx{^{\gamma_H}}{g}{}_H\circ\D_H))(\xHO).
\end{equation*}
\end{definition}

\subsection{Computation of the proximity operator of $g_h \circ \D_h$}
If $\D_h$ is the projection on a tight frame (e.g., a union of wavelets), meaning that $\D_h \D_h^* = \mu \Id$ for a constant $\mu>0$ and $\D_h^*$ the adjoint of $\D_h$, the proximity operator of $g_h \circ \D$ is expressed explicitly through the proximity operator of $g_h$, which is known in a large number of cases. 

Otherwise, a common way of estimating the proximity operator is through the \textit{dual problem}. 
Denoting $\R_h =g_h \circ \D_h$, we have that (see for instance \cite{le2022}):\vspace{-0.2cm}
\begin{equation}
\label{eq:prox_optim}
    (\forall x \in \RR^{N_h}) \qquad \prox_{\gamma \R_h}(x):= \prox_{\gamma g_h \circ \D_h}(x) = x - \D_h^* \widehat{u}\vspace{-0.3cm}
\end{equation}
with:
\vspace{-0.3cm}
\begin{equation}
    \widehat{u} \in \argmin_{u\in\RR^{K} } \frac{1}{2} \Vert \D_h^* u -x\Vert^2 + \gamma g_h^*(u),
\label{eq:dual_prox}
\end{equation}
\vspace{-0.3cm}

\noindent where $g_h^*$ is the convex conjugate of $g_h$. This problem is known as the dual problem. 
An approximation of $\widehat{u}$ may be obtained by applying any convenient optimization method to \eqref{eq:dual_prox}. For instance, FISTA  yields the following sequence (choosing $u_0=v_0$):
\begin{align}
    u_{k+1} &= \left(\Id-\gamma \prox_{g_h/\gamma}\left(\cdot/\gamma\right)\right)\left((\mbox{Id}-\D_h \D_h^*)v_{k} + \gamma_h \D_h x\right) \label{eq:dual_prox_steps} \\
    v_{k+1} &= (1+\alpha_k)u_{k+1} -\alpha_k u_k.
    \label{eq:dual_prox_steps_iner}
\end{align}
where the first step is deduced from the Moreau decomposition \cite{bauschke2017}.
Dual optimization is a simple way to estimate the proximity operator while offering guarantees on the computed approximation, as stated in the following lemma.  
\begin{proposition}{\emph{(Dual optimization yields approximation of type 2)}}
Assume that $(u_k)_{k \in \mathbb{N}}$ is a minimizing sequence for the dual function in \eqref{eq:dual_prox}. This yields:
\begin{itemize}
    \item A convergent sequence $(x-\D_h^* u_k)_{k \in \mathbb{N}}$ to the proximity operator \eqref{eq:prox_optim}.
    \item This sequence provides a type 2 approximation of the proximity operator.
\end{itemize}
\begin{proof}
The first point comes from \cite[Theorem 5.1]{villa2013}. Then the approximation of type 2 comes from \cite[Proposition 2.2, and 2.3]{villa2013}.
\end{proof}
\end{proposition}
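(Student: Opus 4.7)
The plan is to leverage Fenchel--Rockafellar duality between the primal proximal problem $\min_z \R_h(z) + \frac{1}{2\gamma}\|z-x\|^2$, with $\R_h = g_h \circ \D_h$, and the dual problem \eqref{eq:dual_prox}. First I would verify, under the standard qualification condition on $g_h$ and $\D_h$, that \eqref{eq:dual_prox} is indeed (up to a constant and a scaling of $u$) the Fenchel dual of the primal proximal problem, and that the associated KKT conditions yield the primal--dual recovery identity $\prox_{\gamma \R_h}(x) = x - \D_h^* \hat{u}$ for any dual optimum $\hat{u}$. This is what justifies studying the candidate sequence $z_k := x - \D_h^* u_k$.

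For the first bullet, I would argue that the primal objective is $(1/\gamma)$-strongly convex in $z$, so the primal minimizer $\hat{z}=\prox_{\gamma \R_h}(x)$ is unique. Strong duality then ensures that the primal--dual gap at $(z_k, u_k)$ is controlled by the dual suboptimality $D(u_k) - D(\hat{u})$, which tends to zero by hypothesis. Combined with strong convexity this yields $\frac{1}{2\gamma}\|z_k - \hat{z}\|^2 \leq D(u_k) - D(\hat{u}) \to 0$, proving $z_k \to \prox_{\gamma \R_h}(x)$.

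For the second bullet, I would use the Fenchel--Young characterization $w \in \partial_\epsilon \R_h(z) \iff \R_h(z) + \R_h^*(w) - \langle z, w\rangle \leq \epsilon$. Setting $z = z_k$ and $w = \gamma^{-1}(x - z_k) = \gamma^{-1}\D_h^* u_k$, I would expand the Fenchel--Young residual in terms of $g_h$, $g_h^*$ and $\D_h$ using the conjugate formula for a composition with a linear operator, and identify it (up to the factor $1/\gamma$) with the primal--dual gap at $(z_k, u_k)$. By the previous step this gap vanishes, hence $\gamma^{-1}(x - z_k) \in \partial_{\epsilon_k} \R_h(z_k)$ with $\epsilon_k \to 0$, which is exactly the definition of a type~2 approximation of accuracy $\epsilon_k$.

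The hard part will be this last step: handling the Fenchel conjugate $(g_h \circ \D_h)^*$ via an inf-convolution formula, and verifying that the Fenchel--Young residual collapses exactly onto the primal--dual gap so that the summability assumptions on $(\epsilon_k)$ of Theorem~\ref{th:convergence_12} can eventually be transferred from the dual optimization error. The cleanest route bypassing this \emph{ad hoc} calculation is to invoke \cite[Propositions~2.2 and~2.3]{villa2013}, which package exactly this computation in the $\epsilon$-subdifferential framework, together with \cite[Theorem~5.1]{villa2013} for the convergence claim of the previous step.
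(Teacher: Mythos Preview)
Your proposal is correct and, in its final paragraph, lands on exactly the paper's proof: the paper simply invokes \cite[Theorem~5.1]{villa2013} for the first bullet and \cite[Propositions~2.2 and~2.3]{villa2013} for the second, with no further argument. The Fenchel--Rockafellar and Fenchel--Young computations you sketch in the body of your proposal are precisely what those cited results package, so your longer outline is a faithful unpacking of the same proof rather than a different route.
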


\section{Experimental results}
\label{sec:results}
The objective of this section is to illustrate the benefits of the proposed IML FISTA in various 
image reconstruction tasks,  particularly when they involve 
large-scale images. We show that FISTA and IML FISTA both converge to the same solution but IML FISTA always converges faster, ensuring a good reconstruction in few iterations and thus providing a method of considerable interest for large-scale imaging applications. Code and examples are available here\footnote{\href{https://laugaguillaume.github.io}{https://laugaguillaume.github.io}}. 

\subsection{Experimental setting for color images reconstruction}

\paragraph{Degradation types} We consider two types of image reconstruction problems: a restoration problem where the  linear operator $\A$ is a Gaussian blur, and an inpainting problem where $\A$ models the action of random pixel deletion. 
In all cases, we consider an additive white Gaussian noise with standard deviation $\sigma$.

\paragraph{Minimization problem}
At fine level, we consider the state-of-the-art optimization problem in this context, the minimization of the sum of a quadratic data-fidelity term and a sparsity prior based on a total variation $\ell_{1,2}$-norm (isotropic total variation):
\begin{equation}\label{eq:fine_pb}
    (\forall x \in \RR^{N_h}),\qquad F_h(x) = \frac{1}{2}\Vert \A_h x - z_h \Vert_2^2 +  \lambda_h\Vert \D_h x \Vert_{1,2},
\end{equation}
with $\lambda_h>0$. In all the experiments, the regularisation parameter $\lambda_h$ was chosen by a grid search, in order to maximize the SNR of $\widehat{x}$ computed by FISTA at convergence. Finally, we choose as initialization $x_{h,0}$, the Wiener filtering of $z$.

\paragraph{Experiment datasets} 
We consider two color images of different sizes to evaluate the impact of the problem's dimension: ``ImageNet Car" the picture of a yellow car of size $512 \times 512 \times 3$, taken from the ImageNet dataset, and a picture taken by the James Webb Space Telescope with its Near-Infrared Camera and its Mid-Infrared Instrument of the structure called ``Pillars of Creation" of size $2048 \times 2048 \times 3$ (Figure \ref{fig:images_originales}). Pixels values are normalized so that the maximum value across all channels is $1$.%

\begin{figure}
    \centering
    \includegraphics[width = 4.67cm]{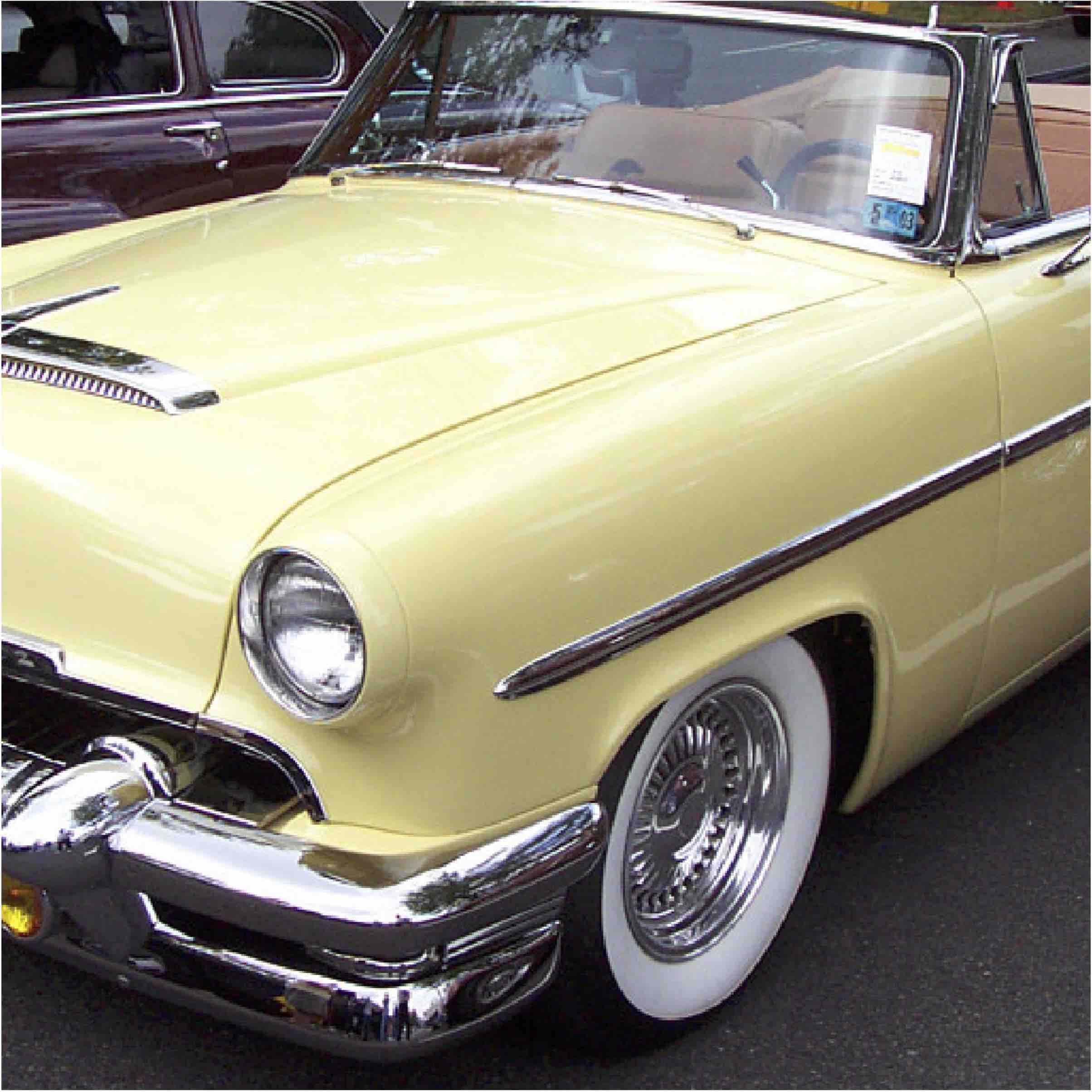}
    \hspace{2em}
    \includegraphics[width = 5cm]{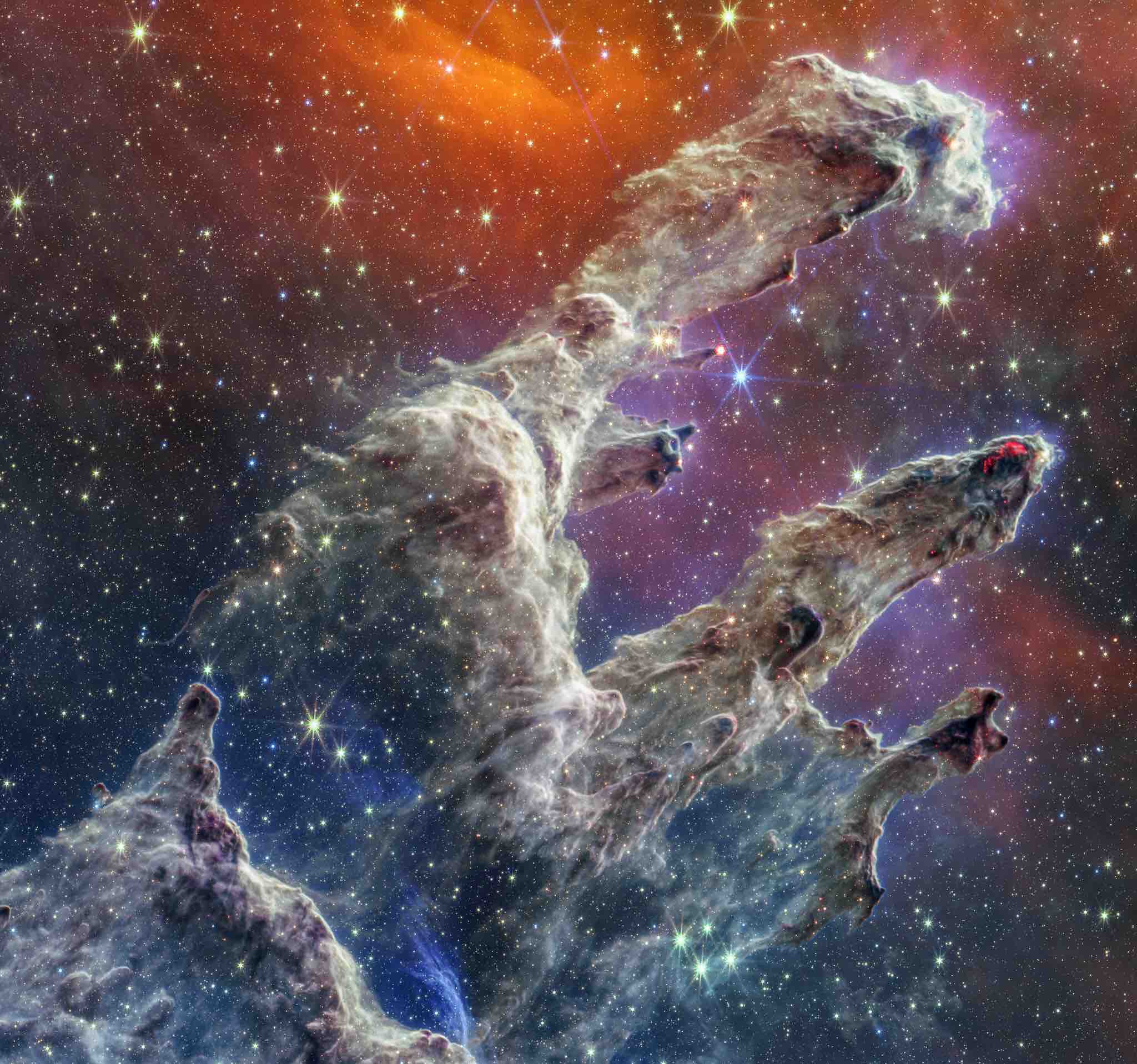}
    \caption[Caption for LOF]{ImageNet Car "ILSVRC2012$\_$test$\_$00000164"\footnotemark[1]. Pillars of Creation\footnotemark[2]. Credits: SCIENCE: NASA, ESA, CSA, STScI (Image processing): Joseph DePasquale (STScI), Alyssa Pagan (STScI), Anton M. Koekemoer (STScI). \vspace{-2em}}
    \label{fig:images_originales}
\end{figure}

\paragraph{Multilevel structure}
For all our experiments we use a 5-levels hierarchy. For ``Pillars of Creation" the first level corresponds to an image of size $2048 \times 2048 \times 3$, and the fifth level to an image of size $ 128 \times 128 \times 3 $. Similarly for ``ImageNet Car" the first level corresponds to an image of size $512 \times 512 \times 3$ and the fifth level to an image of size $32 \times 32 \times 3$. 

The coarse model associated to \eqref{eq:fine_pb} is written as:
\begin{equation}
    (\forall x \in \RR^{N_H}),\qquad F_H(x) = \frac{1}{2}\Vert \A_H x - z_H \Vert_2^2 + \lambda_H\left(\leftidx{^{\gamma_H}}{g}_H \left(\D_H x\right) \right) + \langle v_H, x \rangle,
\end{equation}
with $\lambda_H>0$, $z_H = \IhH z_h$ and $g_H$ the $\ell_{1,2}$-norm applied on the $N_H$ components of $\D_H x$, as for the fine level. 
    As the dimension of the problem is reduced by a factor 4 every time we lower the resolution, we set the regularization parameter $\lambda_H$ at coarse level to to a quarter of the value of the regularisation parameter at the next higher level. %
    In practice, this ratio gives the best performance in terms of decrease of the fine level objective function. 
The CIT operators were built for every pair of levels with ``Symlets 10" wavelets corresponding to a filter size of $20$ coefficients.

Based on our previous study in \cite{lauga2022_1}, we always impose $p=2$ %
coarse corrections (V-cycles) with $m=5$ iterations per level, and always performed at the beginning of the optimization process, as this configuration showed to perform well for different levels of degradation. 
This appears to be a common setting in the multilevel literature \cite{parpas2016,parpas2017,lauga2022,lauga2022_1,plier2021,fung2020,javaherian2017,ho2019}. Increasing the number of coarse corrections may be occasionally beneficial, while sometimes it decreases the potential gain. Being difficult to know this without solving several times the same optimization problem, 
we deem more important to use and display a configuration that is satisfactory regardless of the specific problem parameters. 
\footnotetext[1]{\href{https://www.kaggle.com/competitions/imagenet-object-localization-challenge/data}{https://www.kaggle.com/competitions/imagenet-object-localization-challenge/data}}
\footnotetext[2]{\href{https://webbtelescope.org/contents/media/images/01GK2KKTR81SGYF24YBGYG7TAP.html}{https://webbtelescope.org/contents/media/images/01GK2KKTR81SGYF24YBGYG7TAP.html}}
\paragraph{Accuracy of the computation of the proximity operator}
Convergence guarantees of the algorithm are directly linked to the decrease of the error introduced by estimating the proximity operator at each iteration. The necessary (see Theorems \ref{th:convergence_0} and \ref{th:convergence_12})  %
speed decrease depends on the choice of $d$ (Step \ref{alg_step:extrapolation} in Algorithm \ref{alg:MMiFB_general}) and on the type of approximation we are using. Indeed, based on the convergence result derived earlier (Theorem \ref{th:convergence_12}), going from $d=1$ to $d=0$ relaxes the speed decrease. In all cases, a lower error is correlated with a higher computational cost, which is why some strategies rather use a fixed budget of sub-iterations to compute the proximity operator \cite{beck2009}. This fixed budget comes at the cost of a limited precision on the estimated solution and may lead to divergence after a large number of iterations. 

This problem was notably addressed in \cite{machart2012}, where the authors introduced the Speedy Inexact Proximal-Gradient Strategy (SIP). %
The number of sub-iterations used to estimate the proximity operator is dynamically increased. More precisely, if at step $k$, $F_h(\xhk) > F_h(x_{h,k-1})$, we decrease the tolerance ($tol$) on the estimation of the proximity operator at the next steps $k+1,k+2,\ldots$ as $tol$ controls the relative distance between two consecutive sub-iterates of the proximity operator estimation.

\begin{algorithm}
\caption{Accuracy of the proximity operator estimation}\label{alg:prox_estimation}
\begin{algorithmic}[1]
\STATE Set $x_{h,0} \in \RR^N$,
\FOR {$k=0,1,\ldots,$}
    \IF {$F_h(\xhk)>F_h(x_{h,k-1})$} 
    \STATE $tol = tol/10$
    \ENDIF
\ENDFOR
\end{algorithmic}

\end{algorithm}
This minimization is carried out by FISTA coupled with a warm start strategy as in \cite{beck2009}. We set the initialization value of $tol$ based on the reconstruction quality of images in a Total Variation based denoising problem (that is equivalent to one computation of the associated proximity operator). $tol=10^{-8}$ at the start of the optimization unless stated otherwise.

\subsection{IML FISTA results on image restoration: deblurring}

To get a full picture of the performance of IML FISTA, we propose four scenarios, corresponding to four different combinations of the size of the Gaussian blur PSF and of the value of the standard deviation $\sigma$(noise) of the Gaussian noise. These four scenarios are described in Table~\ref{tab:scenarios_blur}.

\begin{table}[h]
\centering
\begin{tabular}{c|c|c|}
\cline{2-3}
\textbf{Blur $\backslash$ Noise} & $\sigma$(noise) $= 0.01$ & $\sigma$(noise) $= 0.05$ \\ \hline
\multicolumn{1}{|l|}{dim(PSF) $=20$, $\sigma$(PSF) $= 3.6$} & low blur, low noise & low blur, high noise \\ \hline
\multicolumn{1}{|l|}{dim(PSF) $=40$, $\sigma$(PSF) $= 7.3$} &   high blur, low noise & high blur, high noise \\ \hline
\end{tabular}
\caption{Four scenarios of Gaussian blur degradation with additive Gaussian noise.}
\label{tab:scenarios_blur}
\end{table}
\paragraph{FB/FISTA vs IML FB/FISTA} This first set of experiments allows us to compare several formulations of IML-FISTA, including its particular instances FB and FISTA. Algorithm~\ref{alg:MMiFB_general} can take the form of 
\begin{itemize}
    \item FB when $d=0$ and $\mbox{ML}(\yhk)=\yhk$,
    \item IML FB when $d=0$ and $\mbox{ML}(\yhk)=\bar{y}_{h,k}$,
     \item FISTA when $d=1$ and $\mbox{ML}(\yhk)=\yhk$,
    \item IML FISTA when $d=1$ and $\mbox{ML}(\yhk)=\bar{y}_{h,k}$.  
\end{itemize}
In Figure~\ref{fig:FBvsIMLFISTA}, we focus on the top left corner degradation configuration (Table \ref{tab:scenarios_blur}) and display the evolution of the objective function w.r.t. the CPU time for the four versions of Algorithm~\ref{alg:MMiFB_general}. We observe that IML FB (resp. FISTA) converges faster than FB (resp. FISTA) and additionally,  it confirms that FISTA and IML FISTA outperform forward-backward approaches without inertial steps.  In the following experiments, we focus on FISTA and IML FISTA comparisons.

\begin{figure}
    \centering
    \includegraphics[trim={2.5em 2.2em 0 2.6em},clip,width=0.5\textwidth]{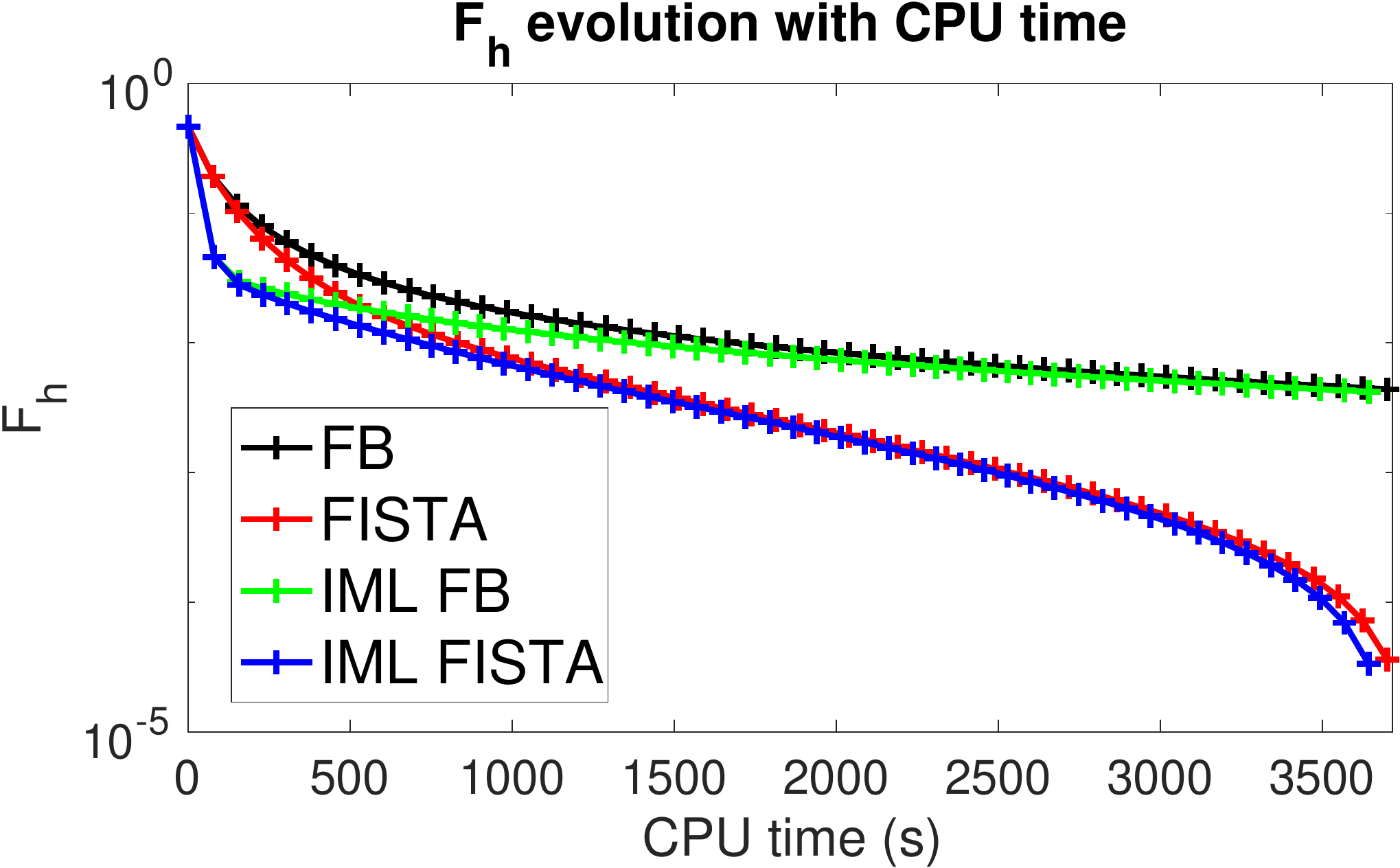}
    \caption{Comparison of FB and FISTA against their multilevel counterpart constructed with our framework, IML FB and IML FISTA for the restoration $\ell_{1,2}$-TV problem for the Pillars of Creation image (see top left corner Table \ref{tab:scenarios_blur}). To put the emphasis on the performance's difference between these algorithms, the objective function evolution is displayed in a log scale between the initial value and the minimum value obtained by these four algorithms in 50 iterations. \vspace{-1em}}
    \label{fig:FBvsIMLFISTA}
\end{figure}
\begin{figure}
    \centering
    \includegraphics[trim={2.5em 2.2em 0 2.7em},clip,width=0.4\textwidth]{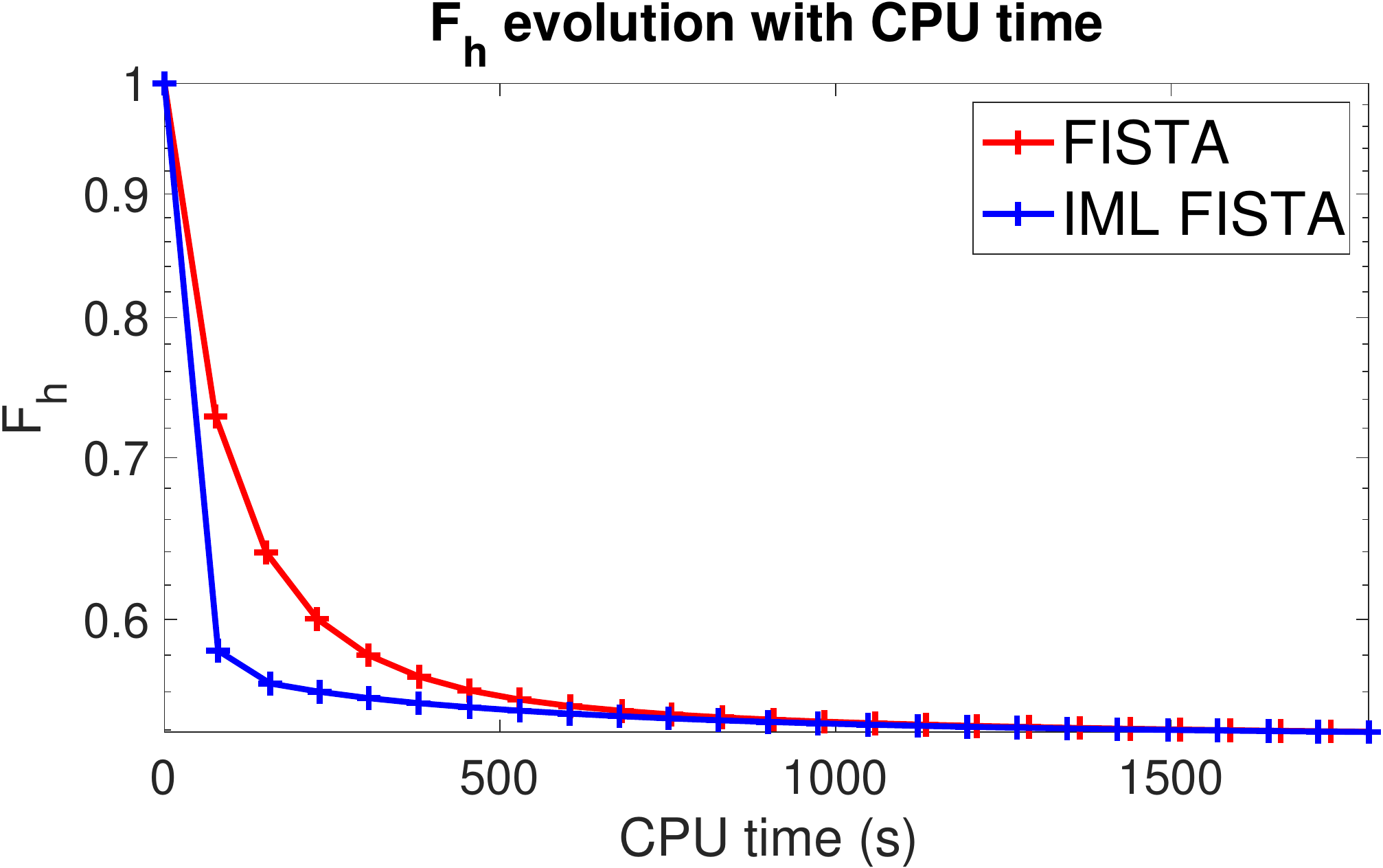}
    \hspace{2em}
    \includegraphics[trim={2.5em 2em 0 2.7em},clip,width=0.4\textwidth]{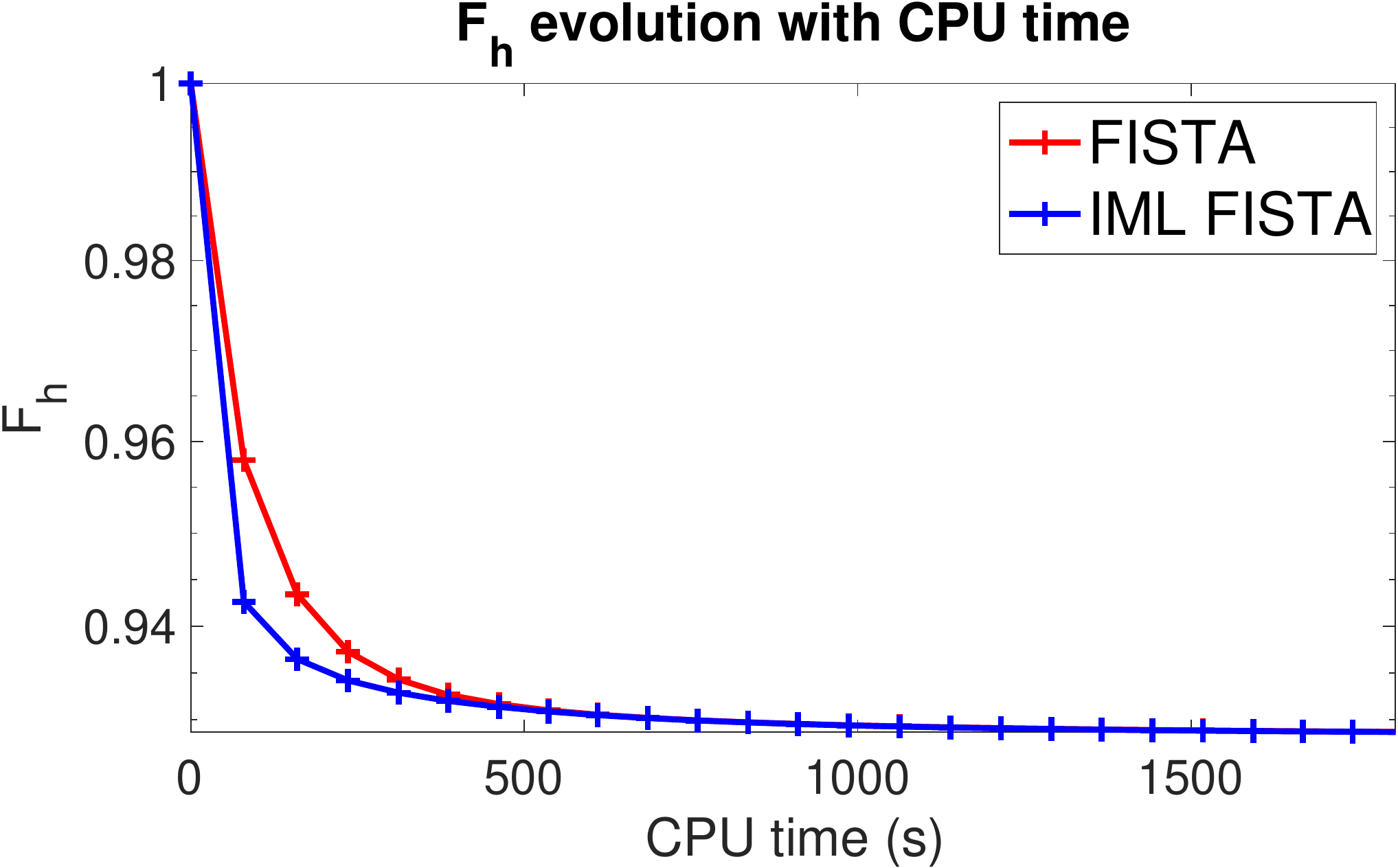} \\ \vspace{0.1em}
    \includegraphics[trim={2.5em 2.2em 0 2.7em},clip,width=0.4\textwidth]{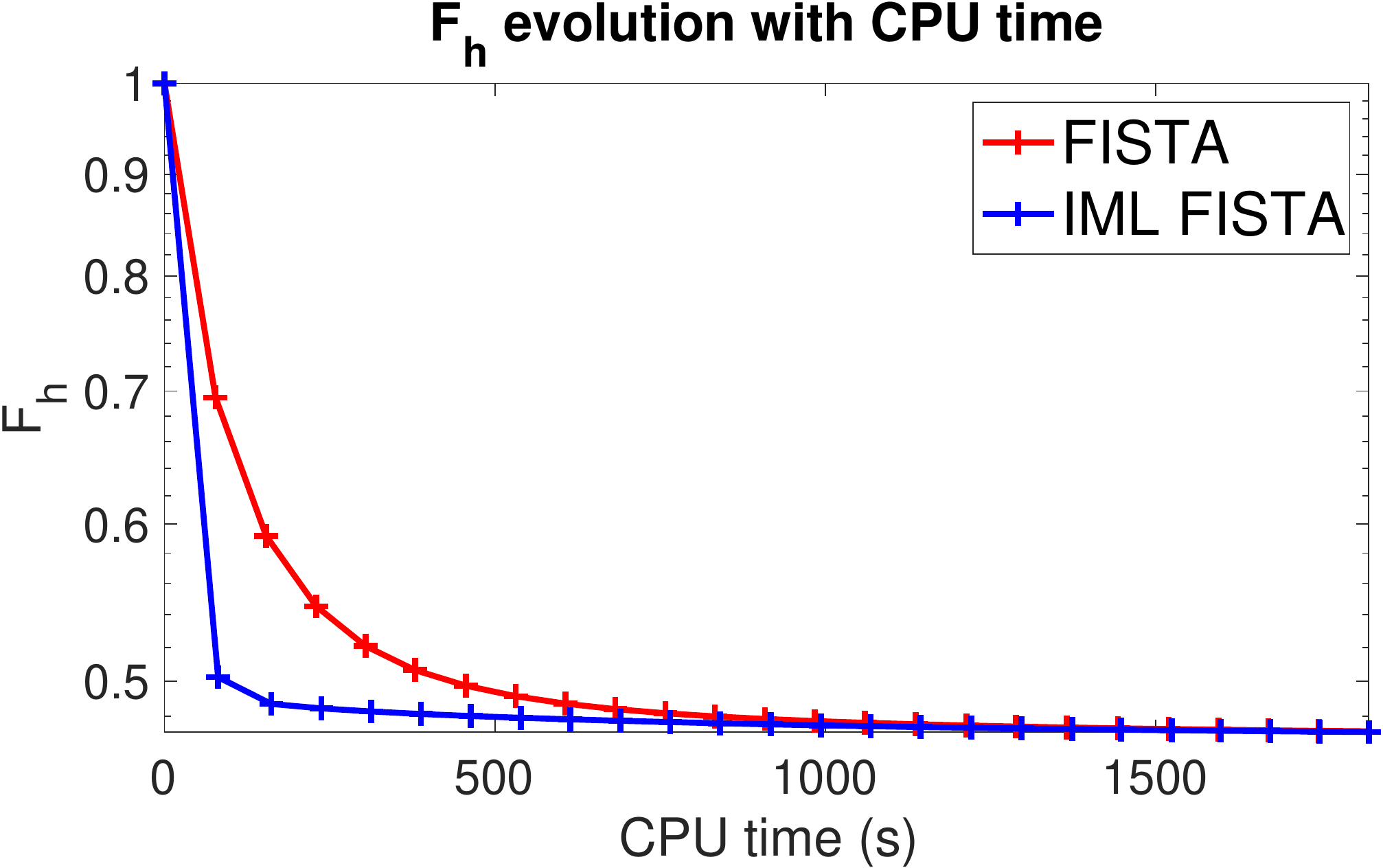}
    \hspace{2em}
    \includegraphics[trim={2.5em 2em 0 2.7em},clip,width=0.4\textwidth]{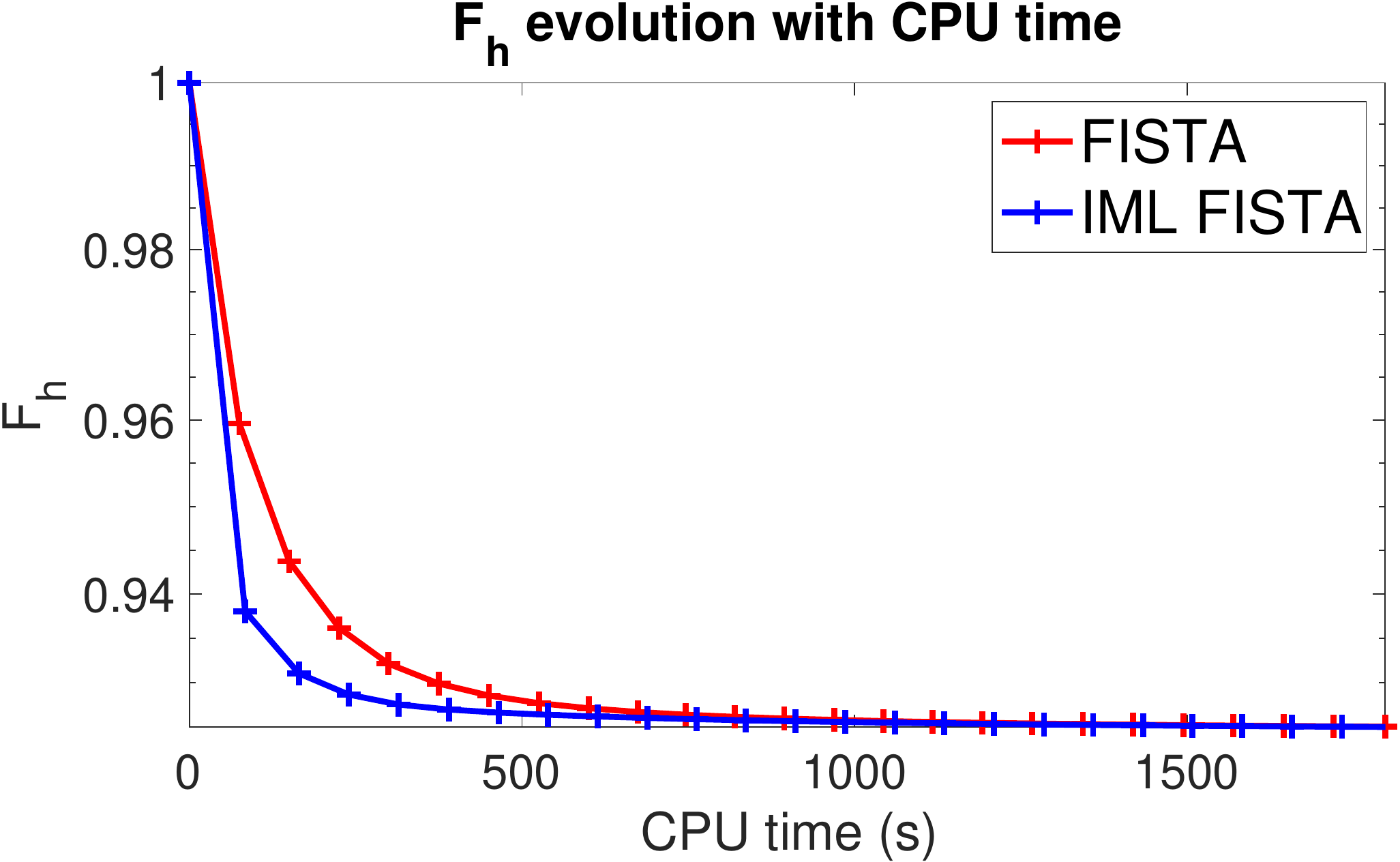}
    \caption{Deblurring $\ell_{1,2}$-TV for the Pillars of Creation image. Objective function (normalized w.r.t. the initial value) vs CPU time (sec). First column: $\sigma$(noise) $= 0.01$; second column: $\sigma$(noise) $= 0.05$. First row: dim(PSF) $=20$, $\sigma$(PSF) $= 3.6$; second row: dim(PSF) $=40$, $\sigma$(PSF) $= 7.3$. For each plot, the crosses represent iterations of the algorithm.} 
    \label{fig:TV_deblurring_FHIR}
\end{figure}
\begin{figure}
    \begin{center}
        \setlength{\tabcolsep}{3pt}
        \begin{tabular}{cccccc}
        \ftn$x$ & \ftn$x_{h,2}^{\text{FISTA}}(16.6)$ & \ftn$x_{h,50}^{\text{FISTA}}(16.8)$ & \ftn$x$ & \ftn$x_{h,2}^{\text{FISTA}}(16.4)$ & \ftn$x_{h,50}^{\text{FISTA}}(16.3)$ \\
         \includegraphics[trim={0 0 0 0},clip,width=0.15\textwidth]{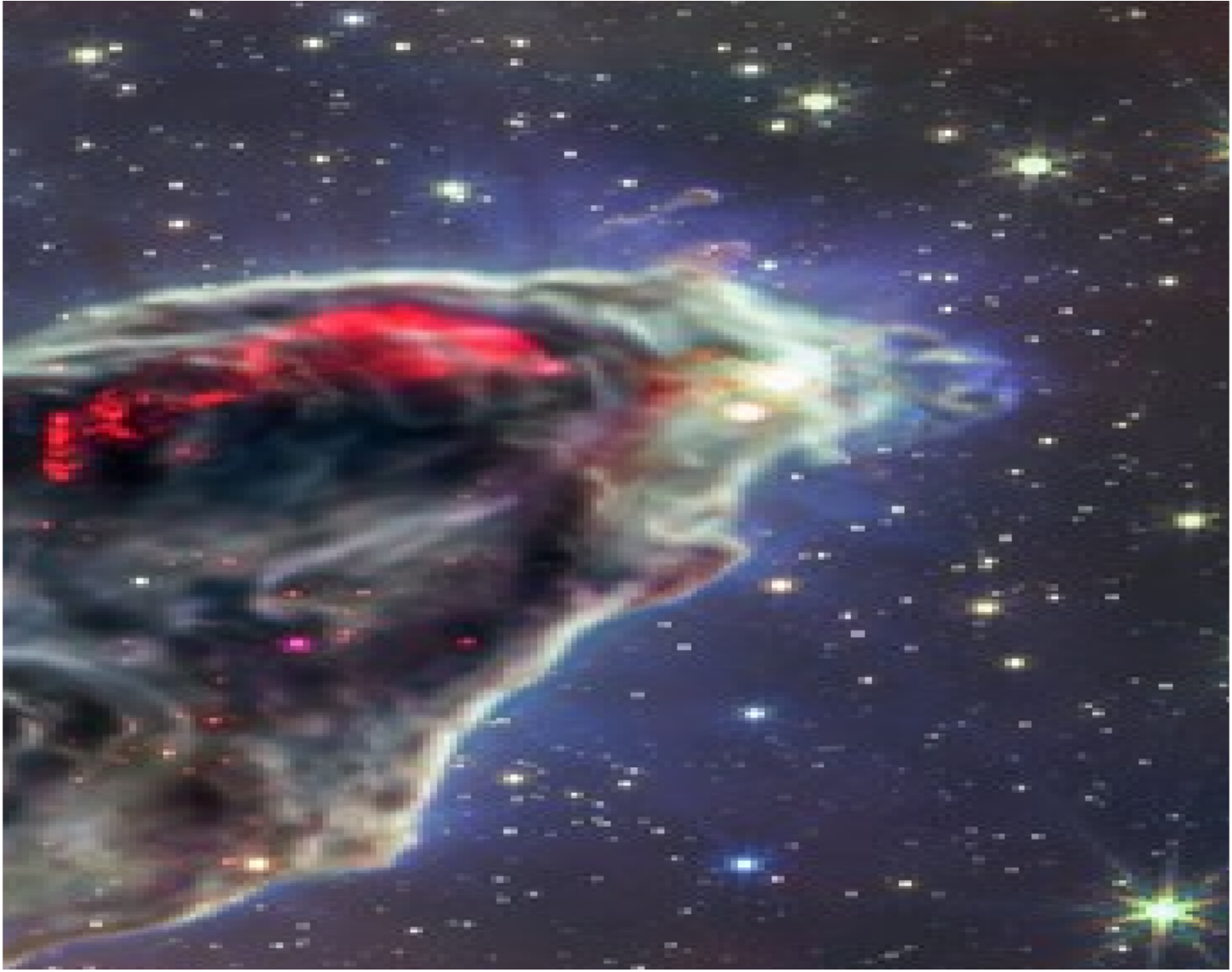} & \includegraphics[trim={0 0 0 0},clip,width=0.15\textwidth]{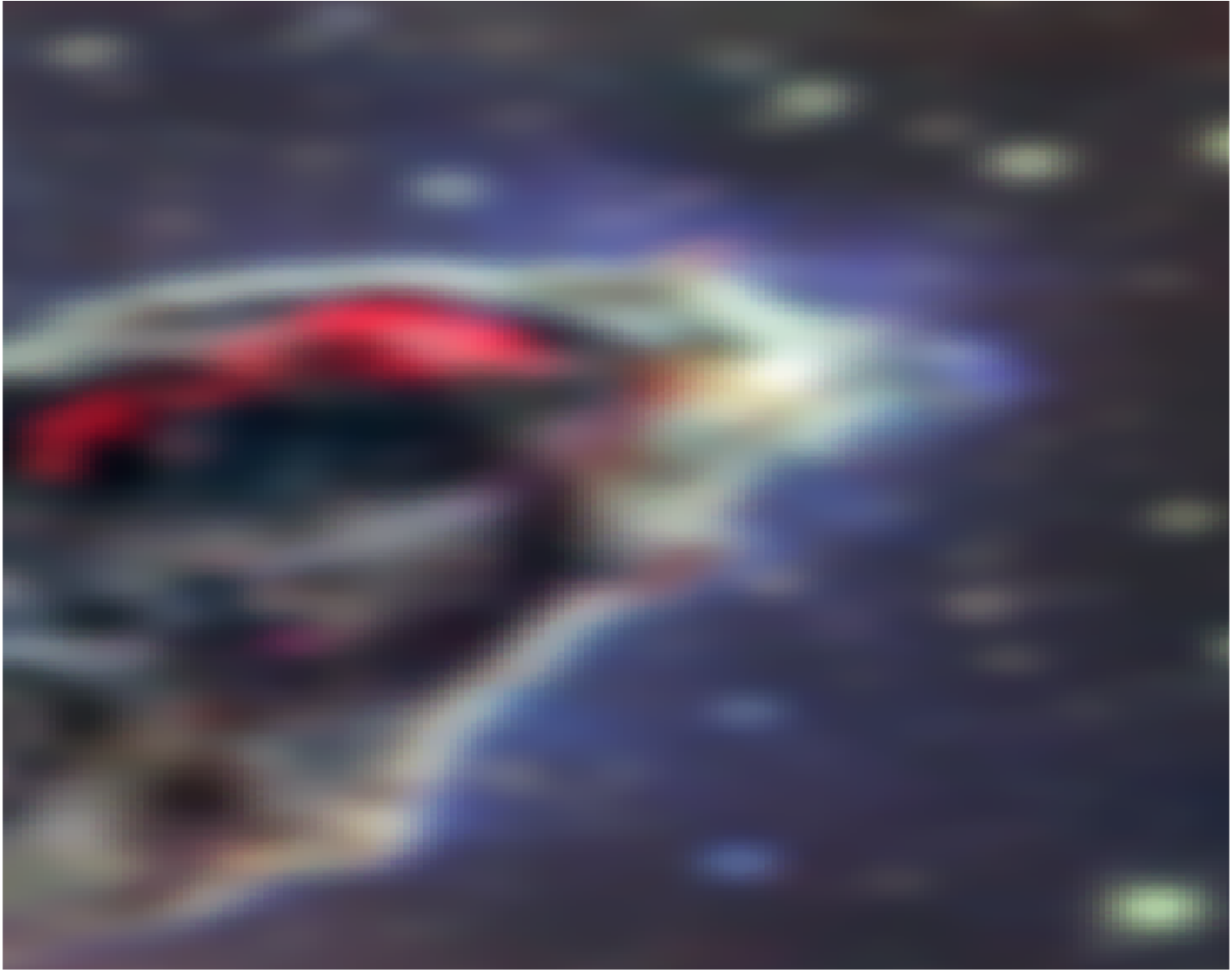} & \includegraphics[trim={0 0 0 0},clip,width=0.15\textwidth]{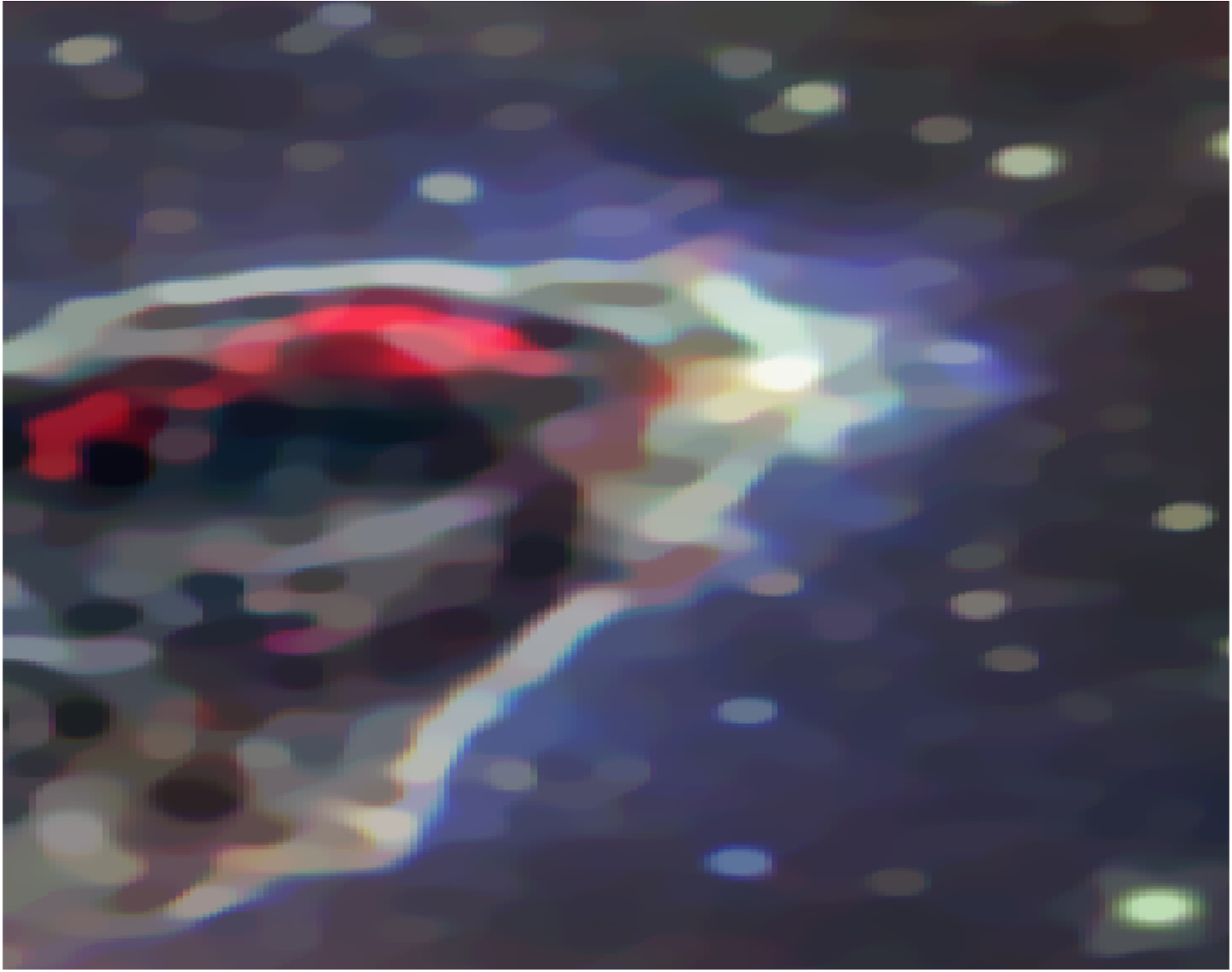} & \includegraphics[trim={0 0 0 0},clip,width=0.15\textwidth]{X-crop.pdf} 
         & \includegraphics[trim={0 0 0 0},clip,width=0.15\textwidth]{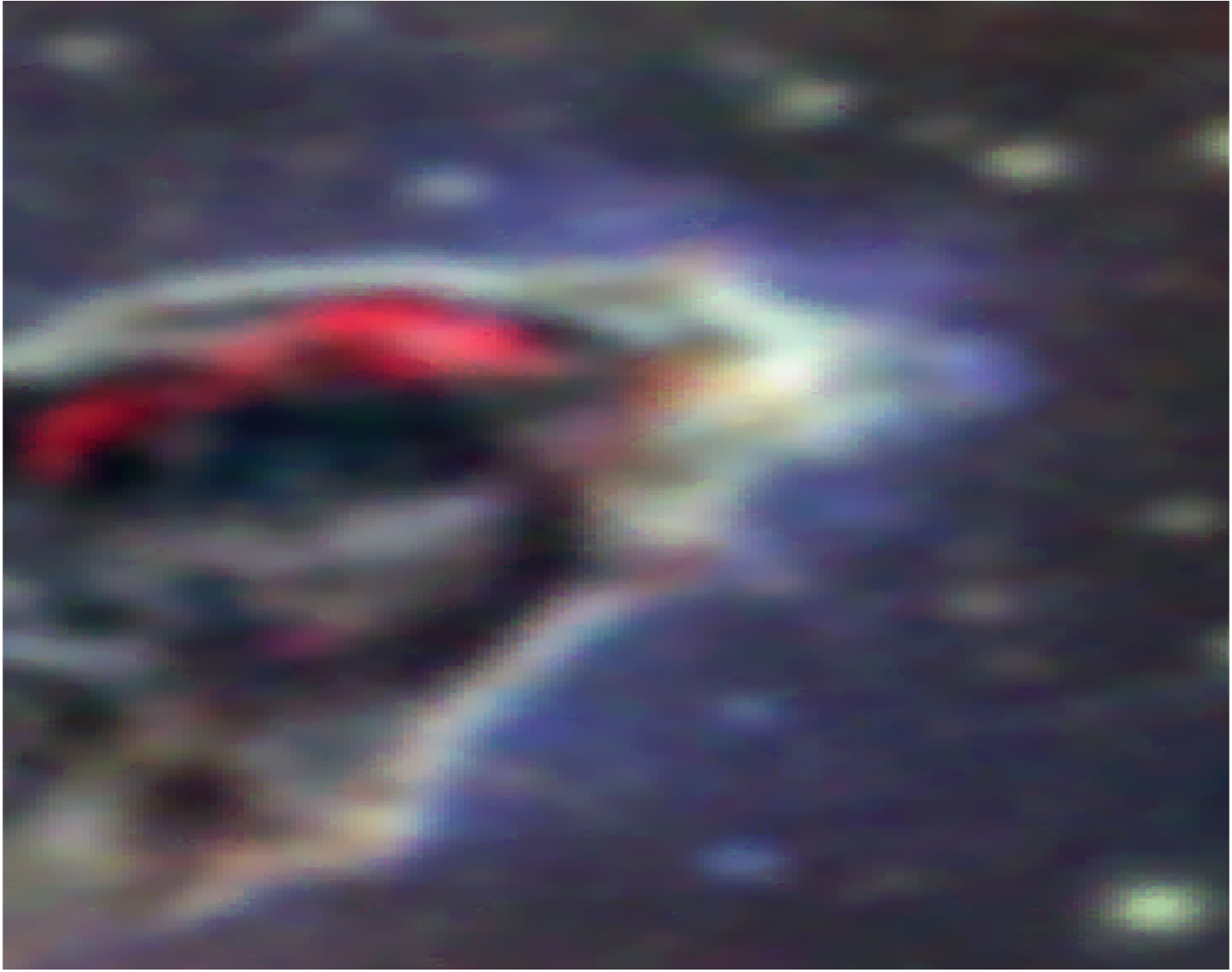} & \includegraphics[trim={0 0 0 0},clip,width=0.15\textwidth]{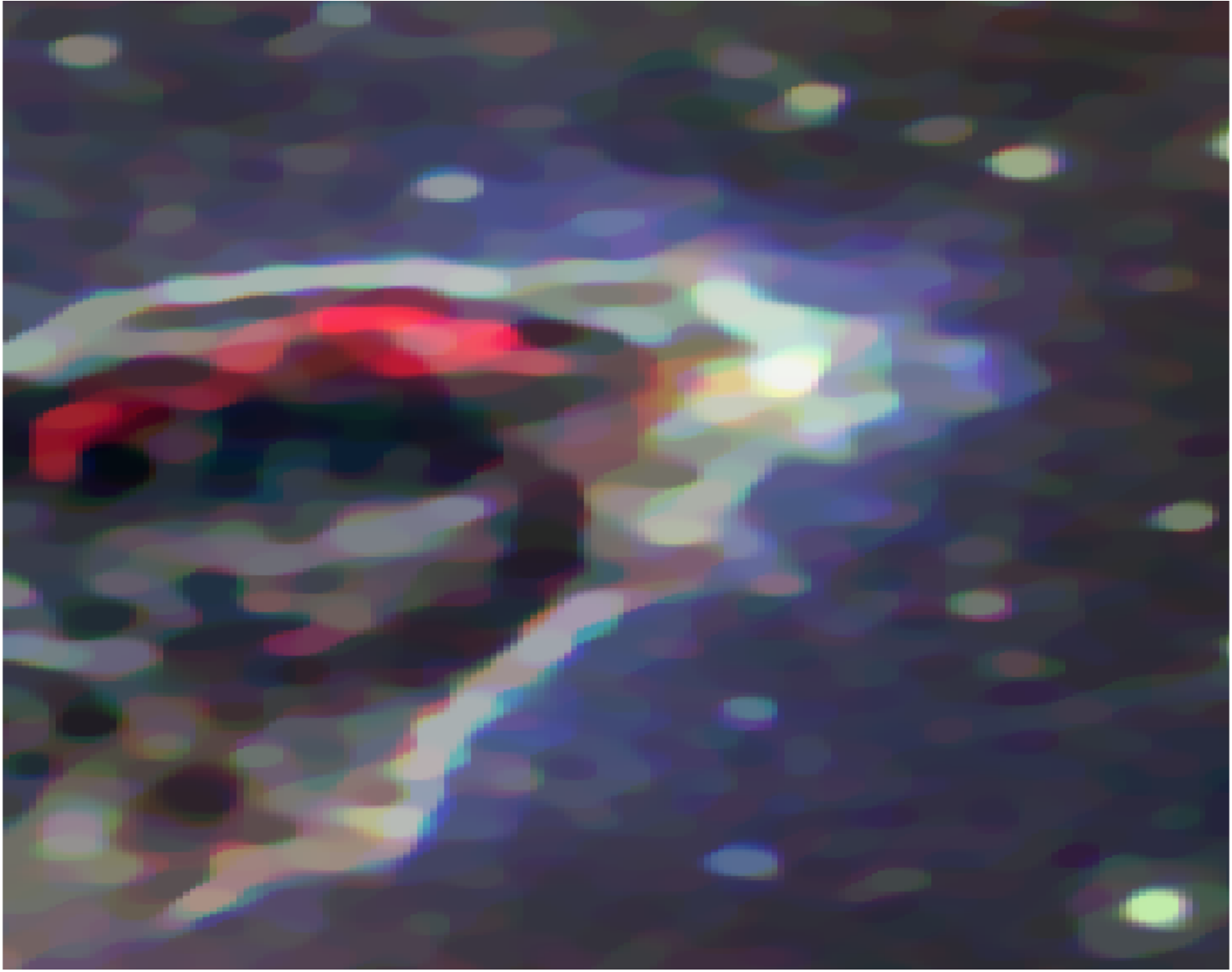}
         \\ 
         \ftn$z(16.3)$ & \ftn$x_{h,2}^{\text{IML FISTA}}(16.8)$ & \ftn$x_{h,50}^{\text{IML FISTA}}(16.8)$ & \ftn$z(16)$ & \ftn$x_{h,2}^{\text{IML FISTA}}(16.5)$ & \ftn$x_{h,50}^{\text{IML FISTA}}(16.3)$ 
         \\
         \includegraphics[trim={0 0 0 0},clip,width=0.15\textwidth]{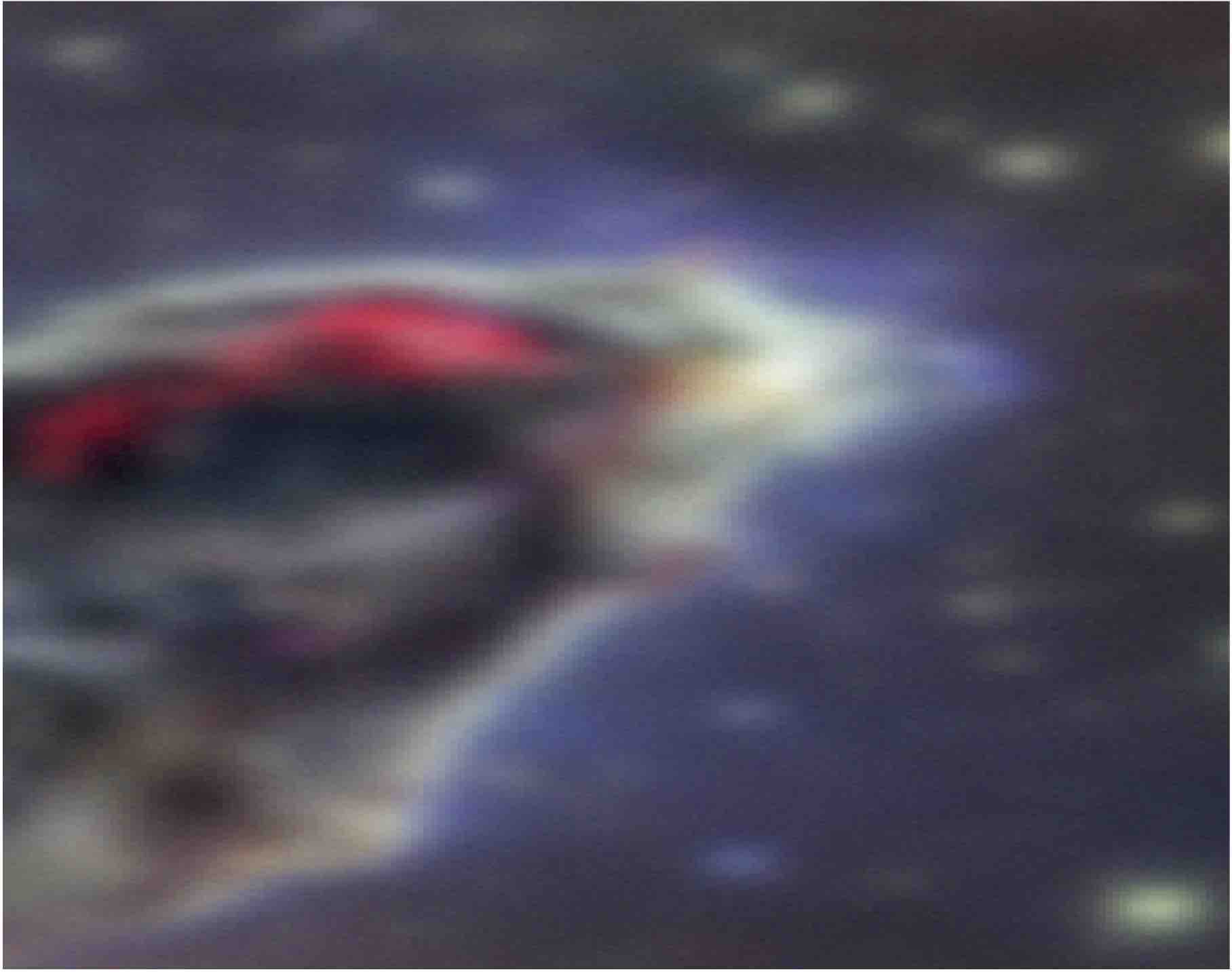} & \includegraphics[trim={0 0 0 0},clip,width=0.15\textwidth]{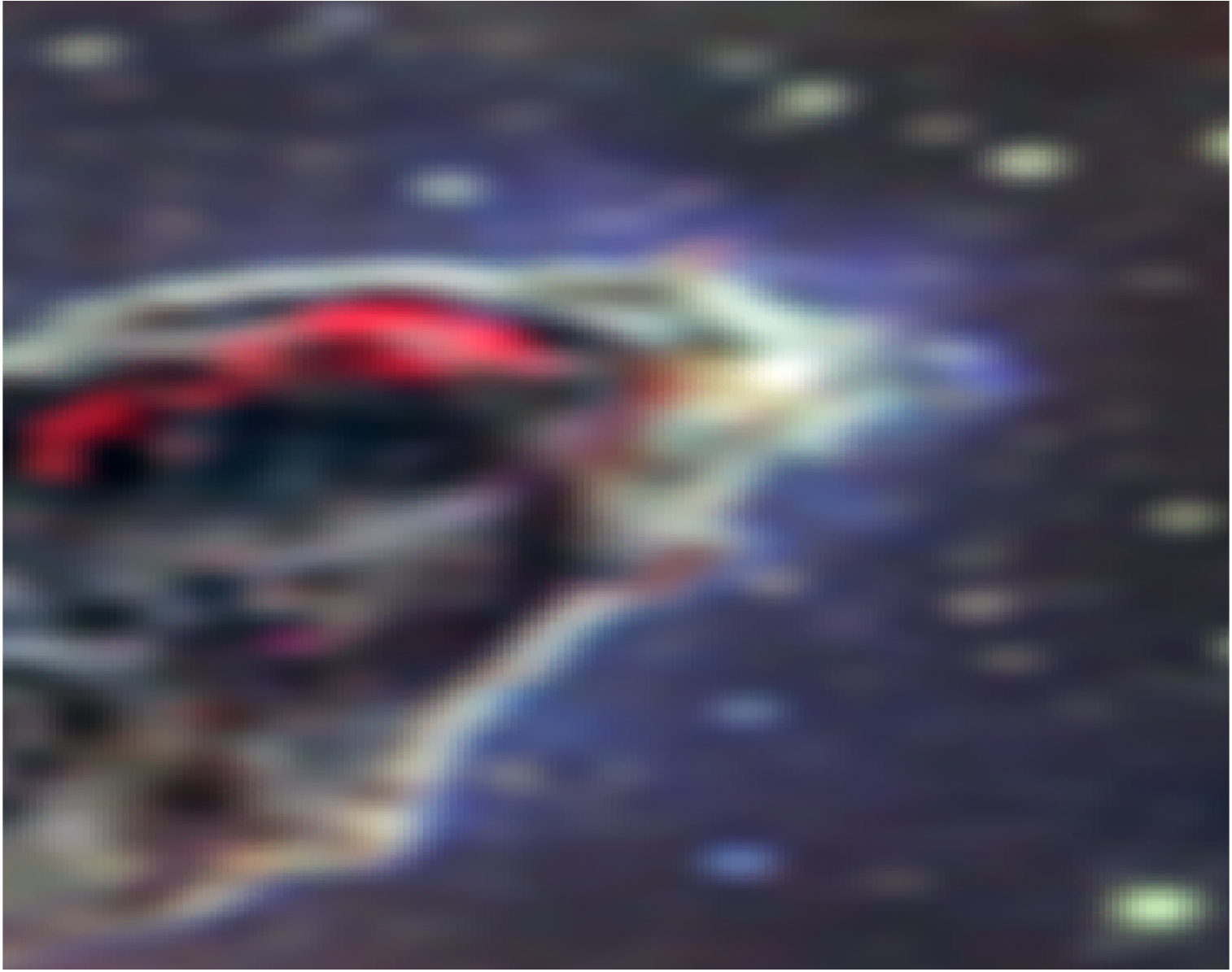} & \includegraphics[trim={0 0 0 0},clip,width=0.15\textwidth]{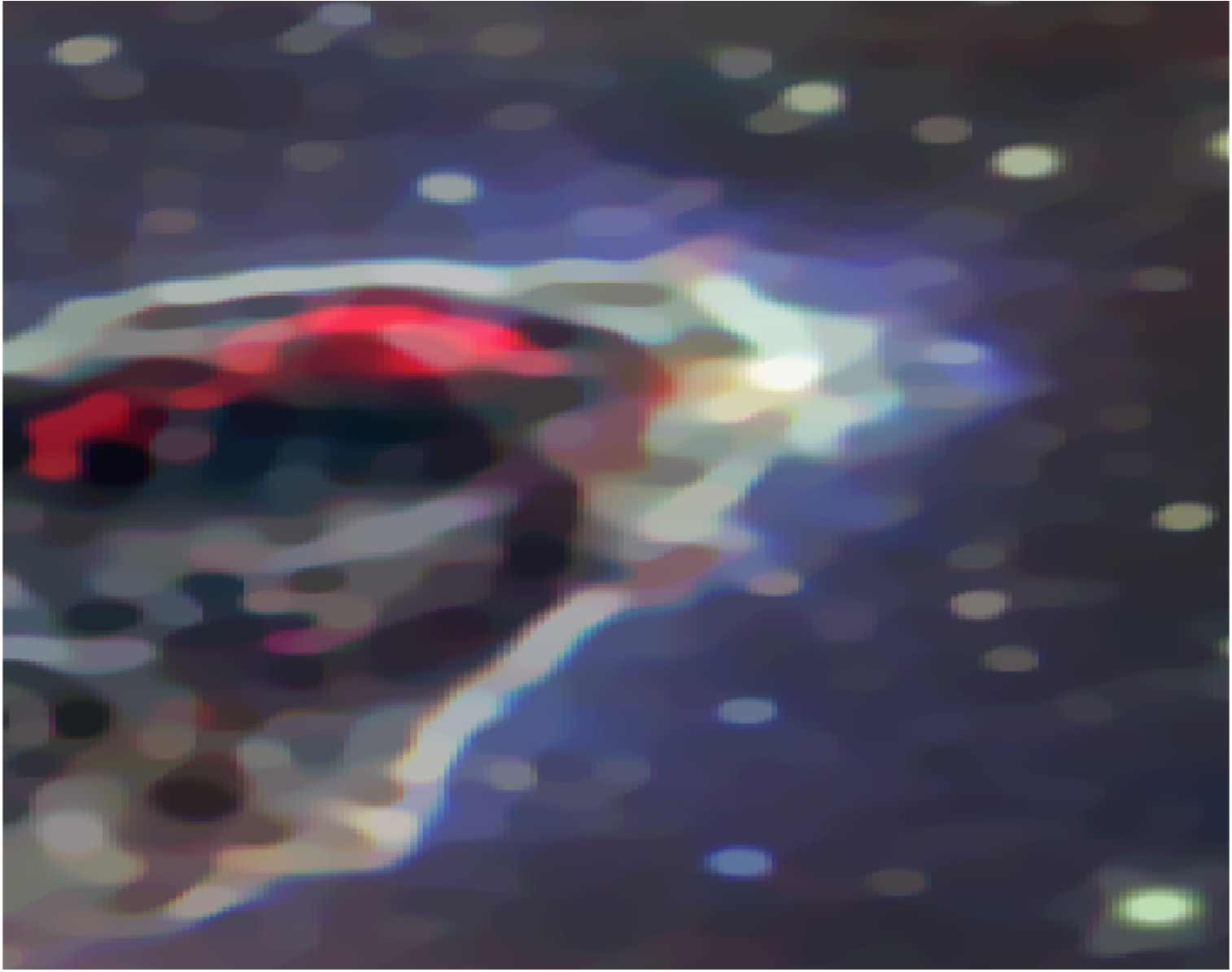} & \includegraphics[trim={0 0 0 0},clip,width=0.15\textwidth]{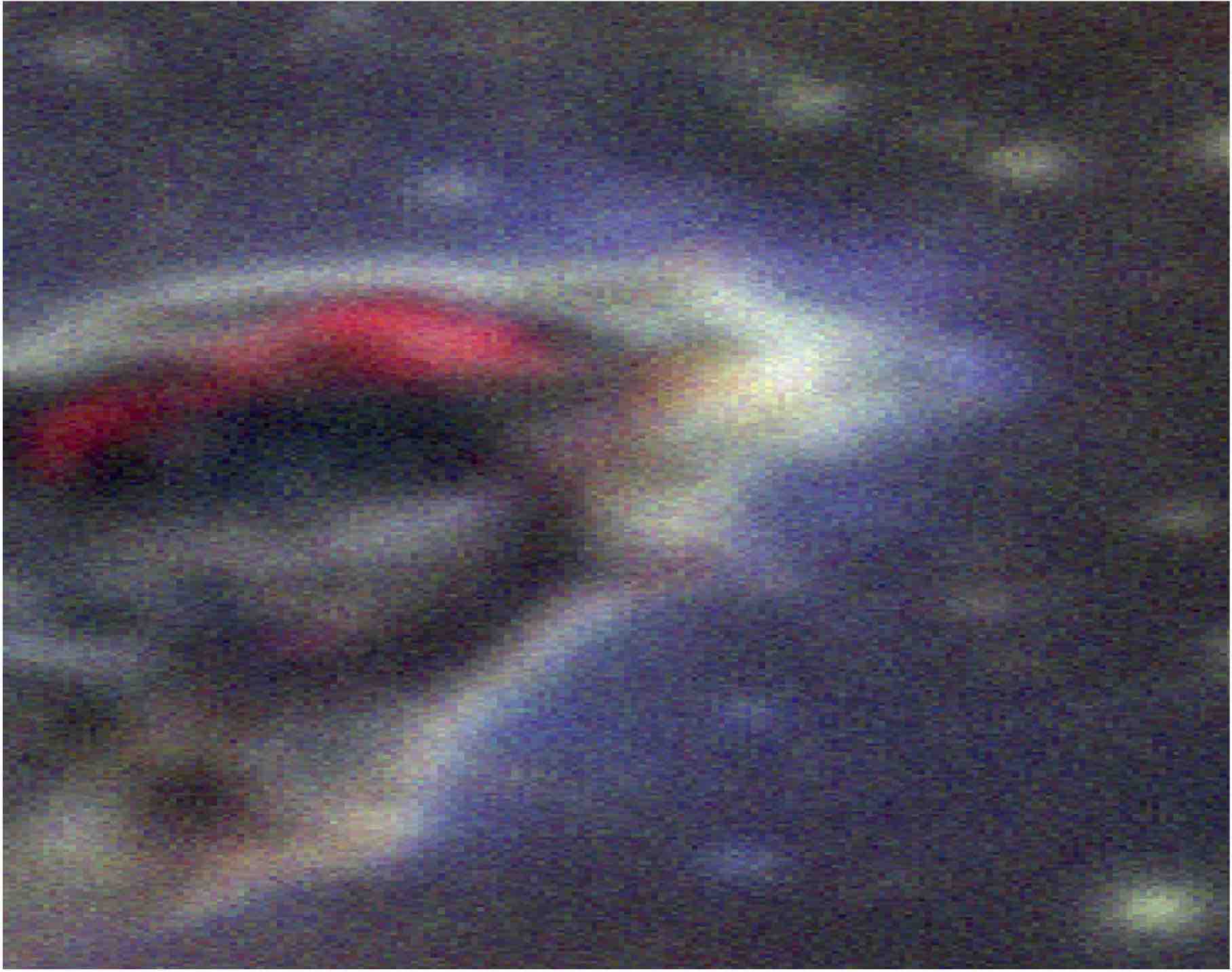} &  \includegraphics[trim={0 0 0 0},clip,width=0.15\textwidth]{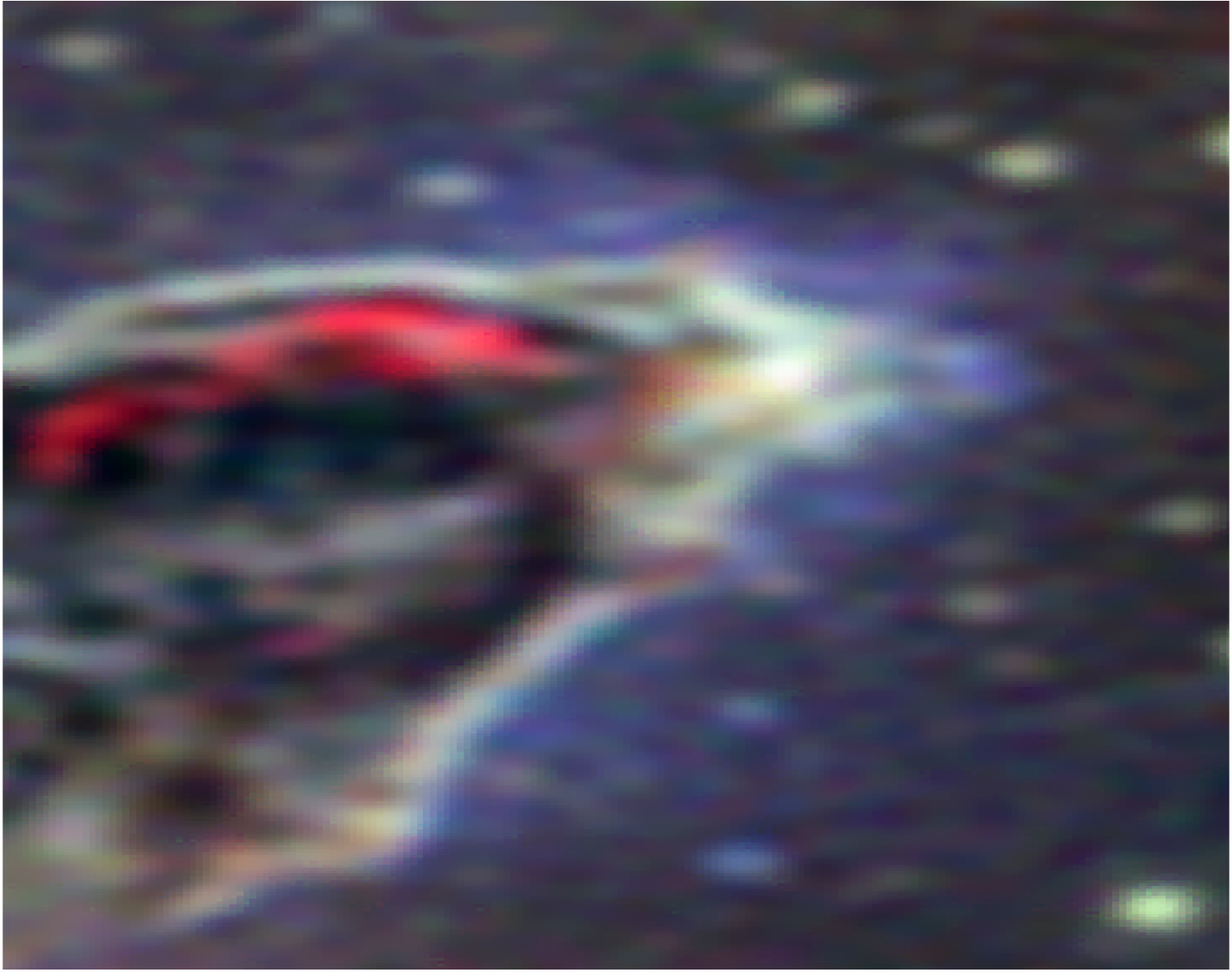} & \includegraphics[trim={0 0 0 0},clip,width=0.15\textwidth]{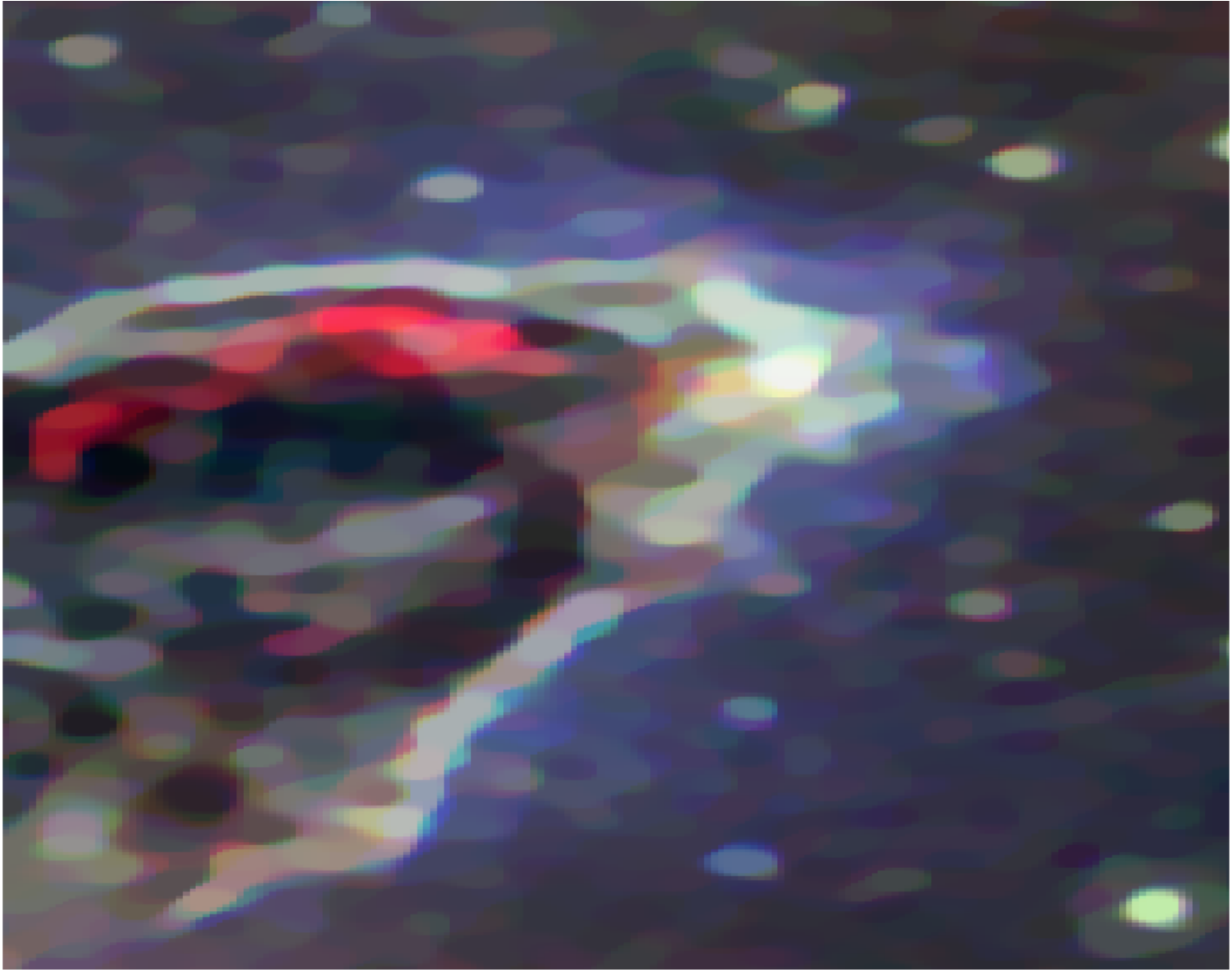}  \\
         \ftn$x$ & \ftn$x_{h,2}^{\text{FISTA}}(15.3)$ & \ftn$x_{h,50}^{\text{FISTA}}(15.7)$ & \ftn$x$ & \ftn$x_{h,2}^{\text{FISTA}}(15.2)$ & \ftn $x_{h,50}^{\text{FISTA}}(15.4)$ 
         \\
         \includegraphics[trim={0 0 0 0},clip,width=0.15\textwidth]{X-crop.pdf} & \includegraphics[trim={0 0 0 0},clip,width=0.15\textwidth]{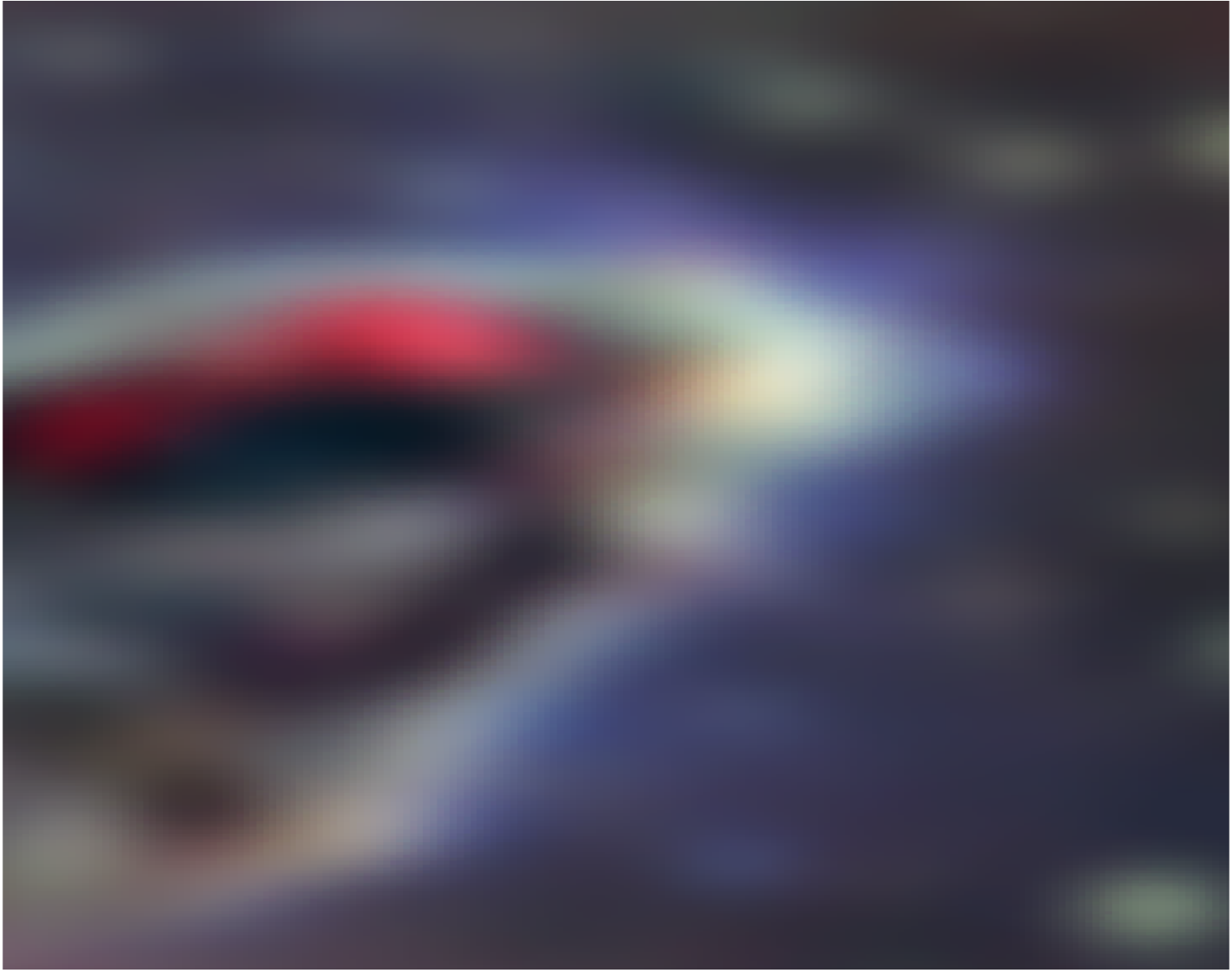} & \includegraphics[trim={0 0 0 0},clip,width=0.15\textwidth]{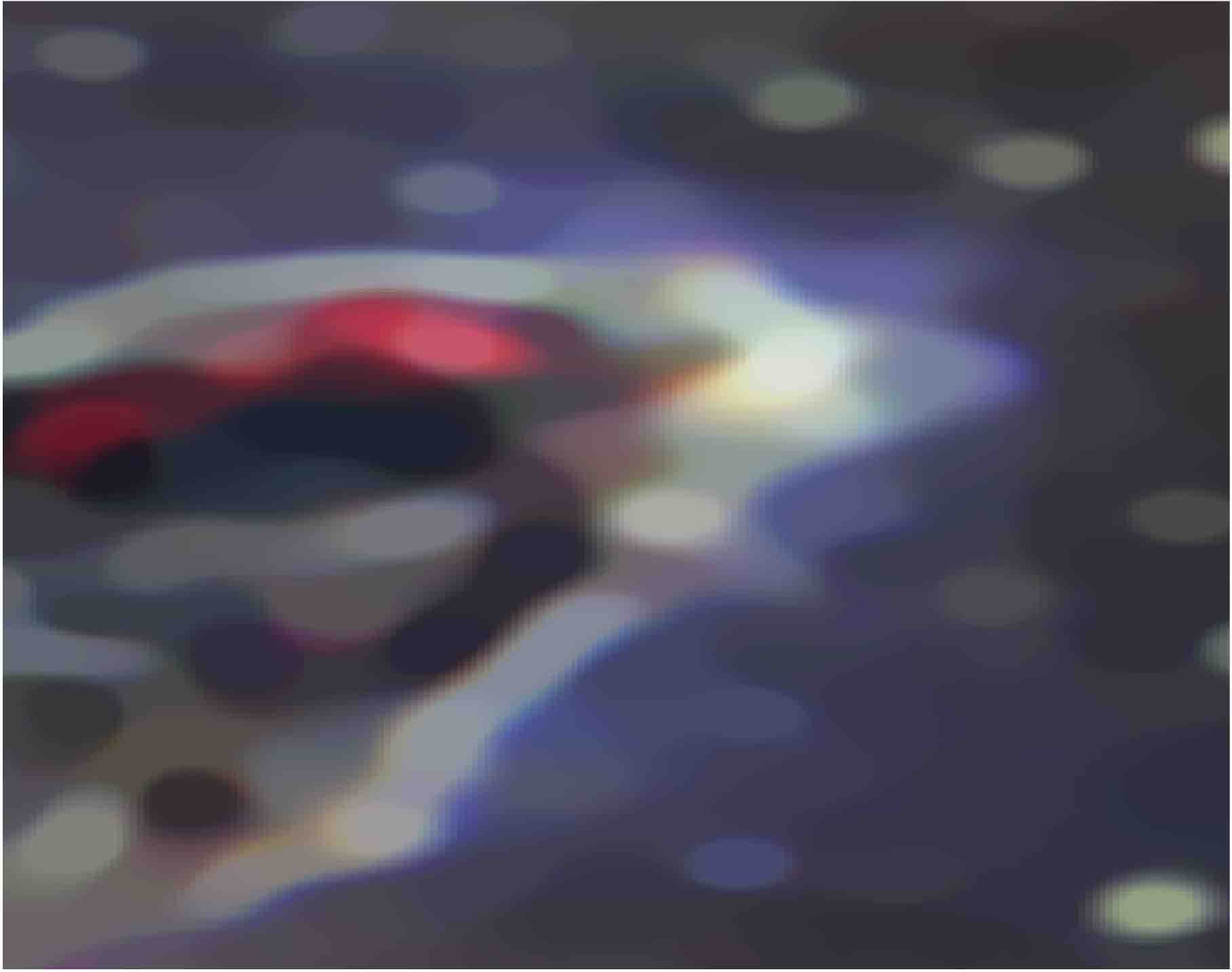} & \includegraphics[trim={0 0 0 0},clip,width=0.15\textwidth]{X-crop.pdf} 
         & \includegraphics[trim={0 0 0 0},clip,width=0.15\textwidth]{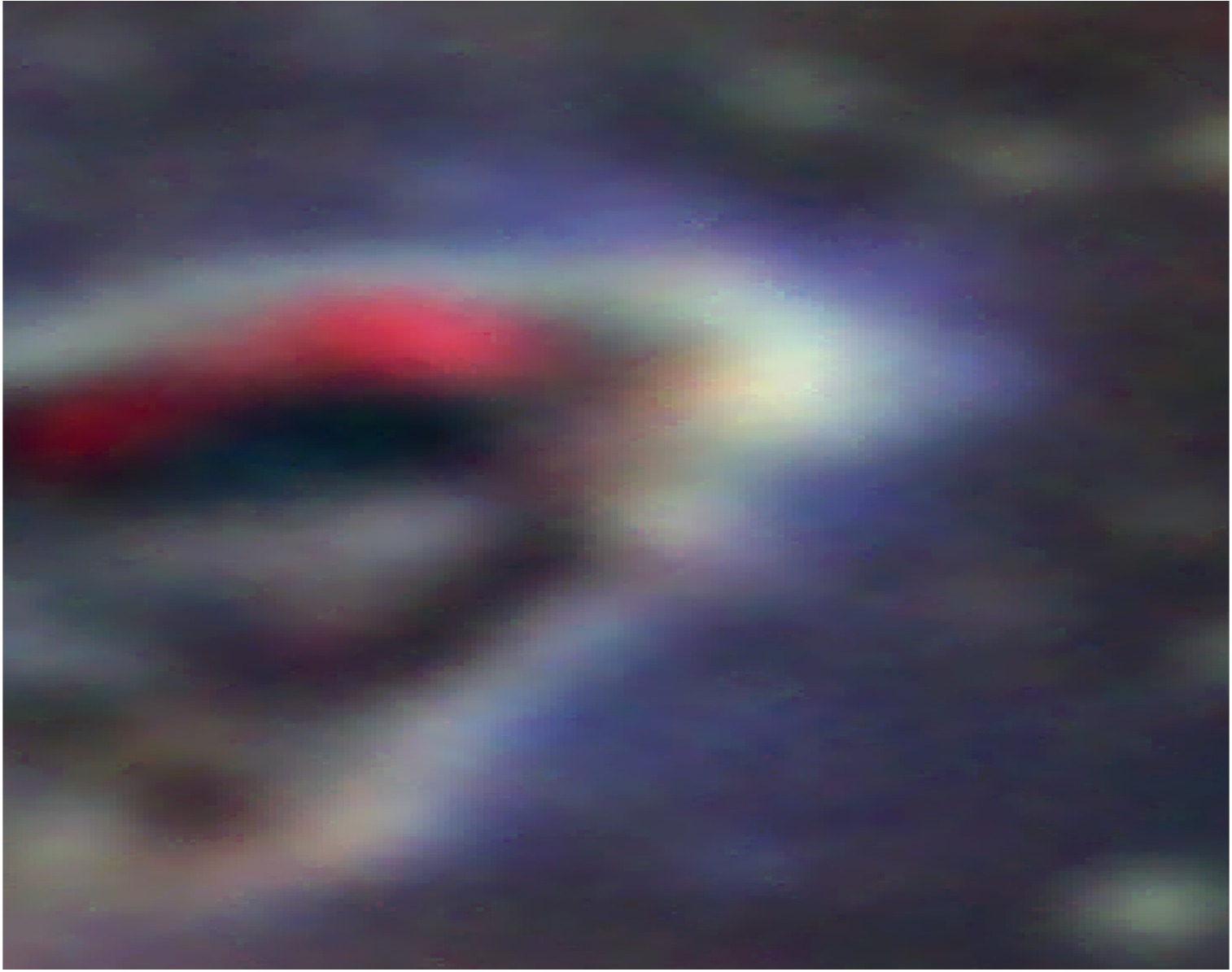} & \includegraphics[trim={0 0 0 0},clip,width=0.15\textwidth]{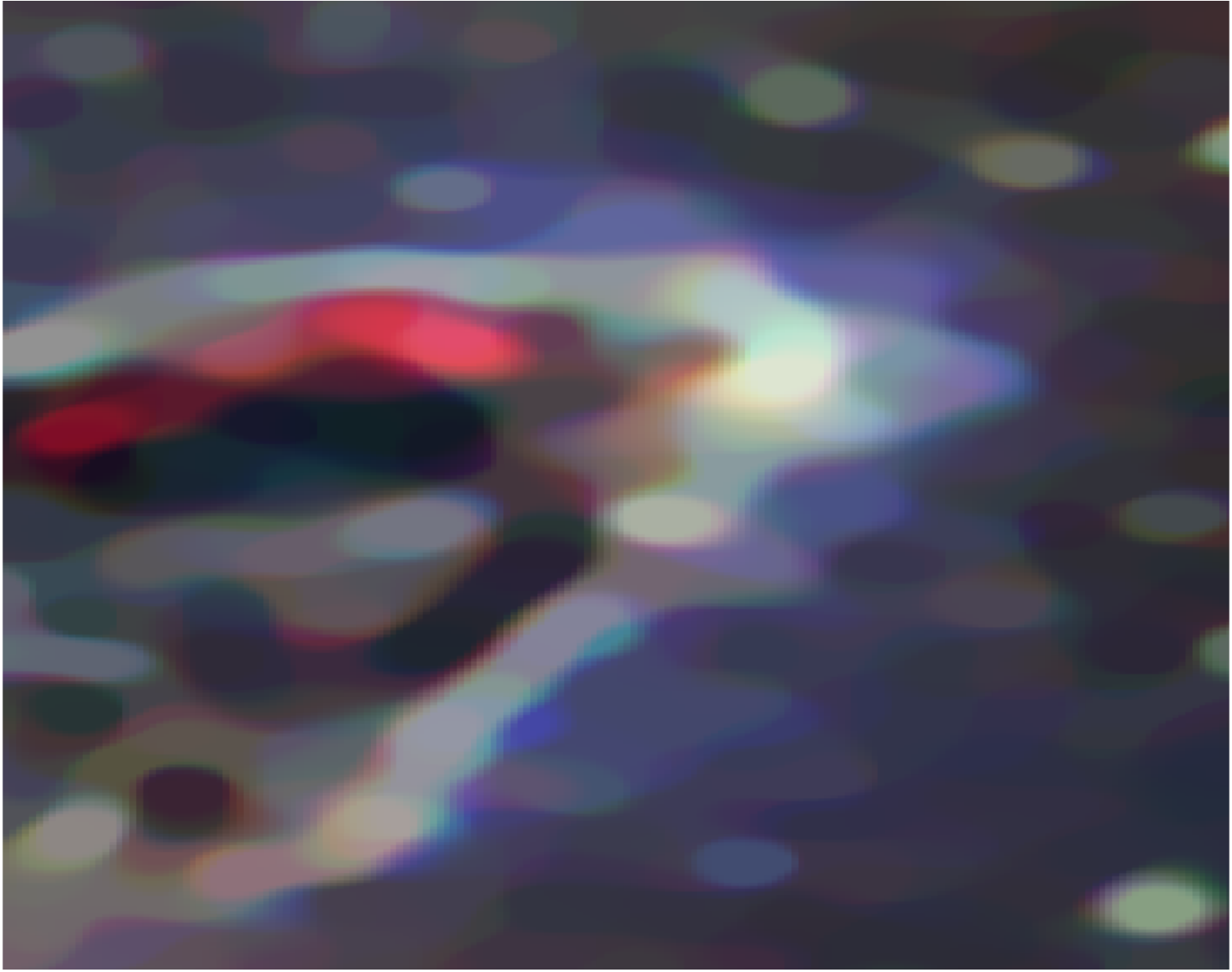}
         \\
         \ftn$z(15)$ & \ftn$x_{h,2}^{\text{IML FISTA}}(15.6)$ & \ftn$x_{h,50}^{\text{IML FISTA}}(15.7)$ & \ftn$z(14.8)$ & \ftn$x_{h,2}^{\text{IML FISTA}}(15.5)$ & \ftn$x_{h,50}^{\text{IML FISTA}}(15.4)$ 
         \\
         \includegraphics[trim={0 0 0 0},clip,width=0.15\textwidth]{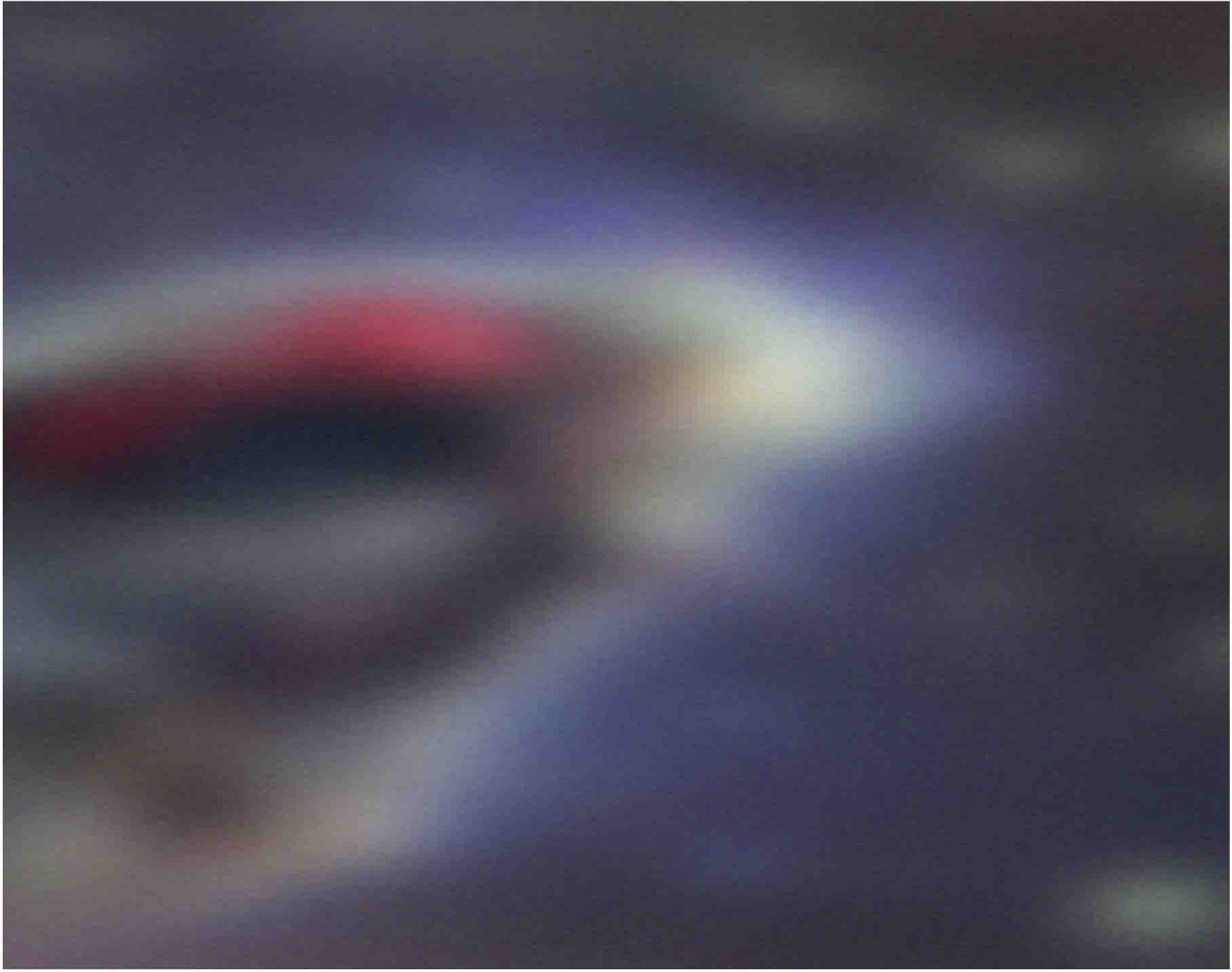} & \includegraphics[trim={0 0 0 0},clip,width=0.15\textwidth]{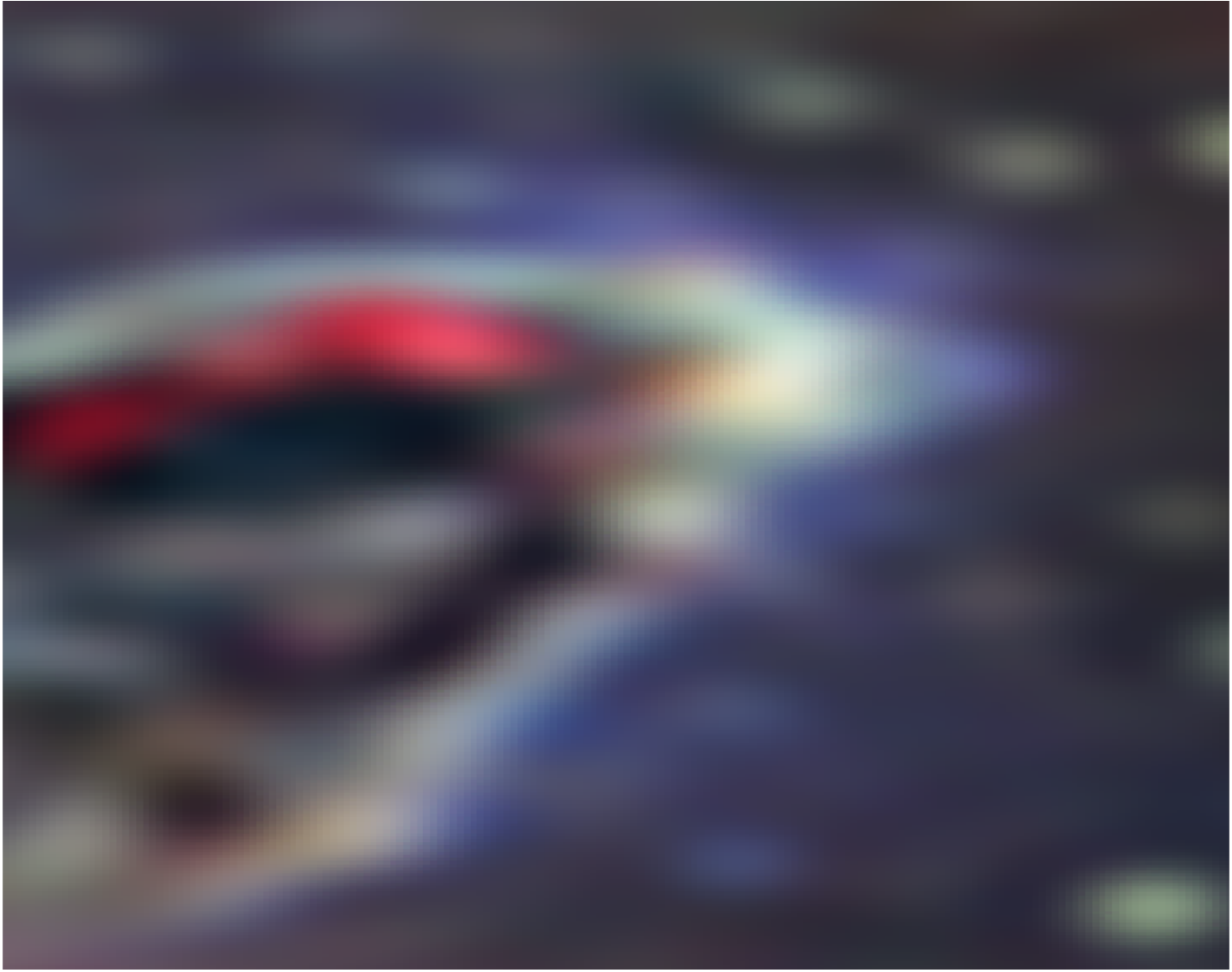} & \includegraphics[trim={0 0 0 0},clip,width=0.15\textwidth]{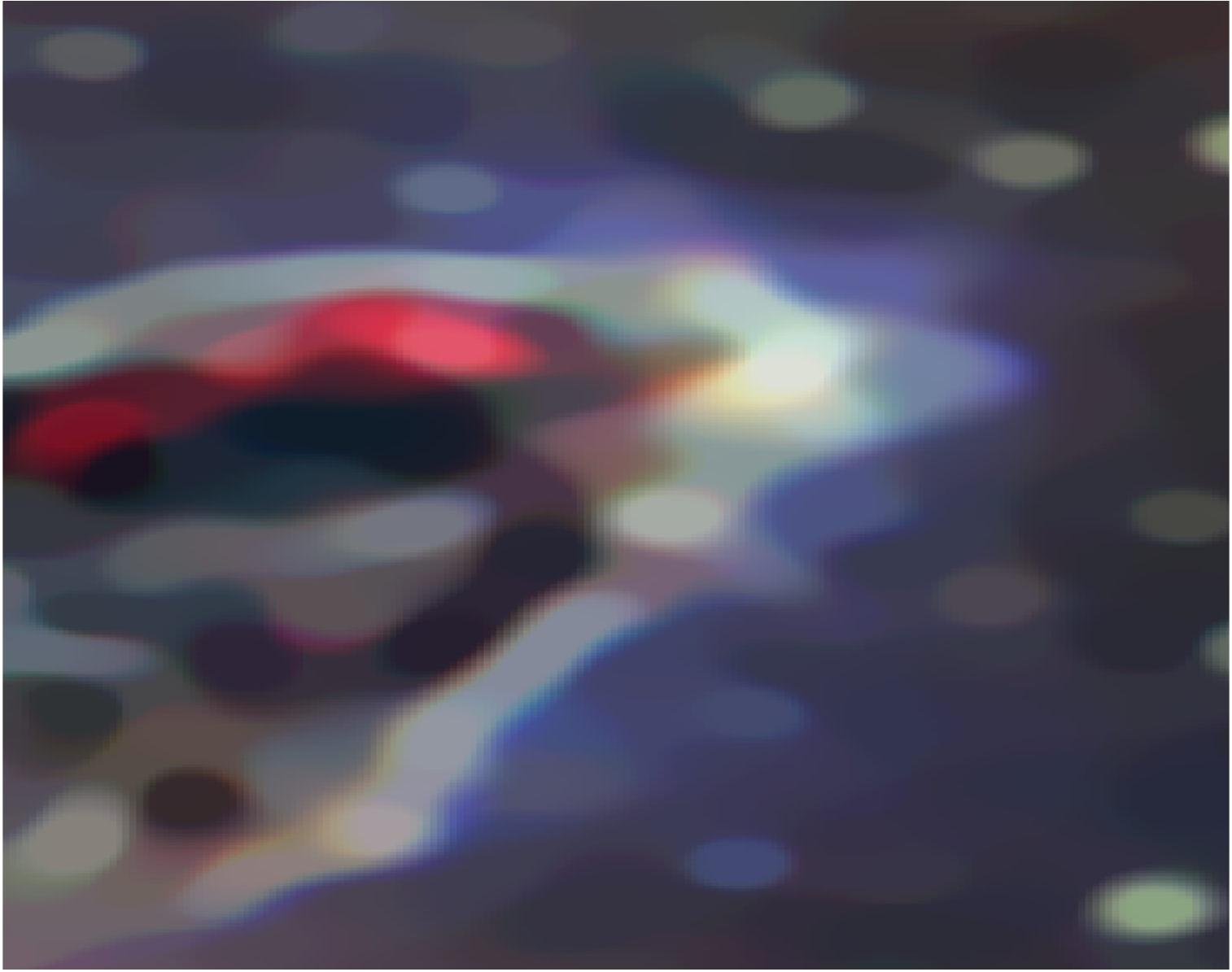} & \includegraphics[trim={0 0 0 0},clip,width=0.15\textwidth]{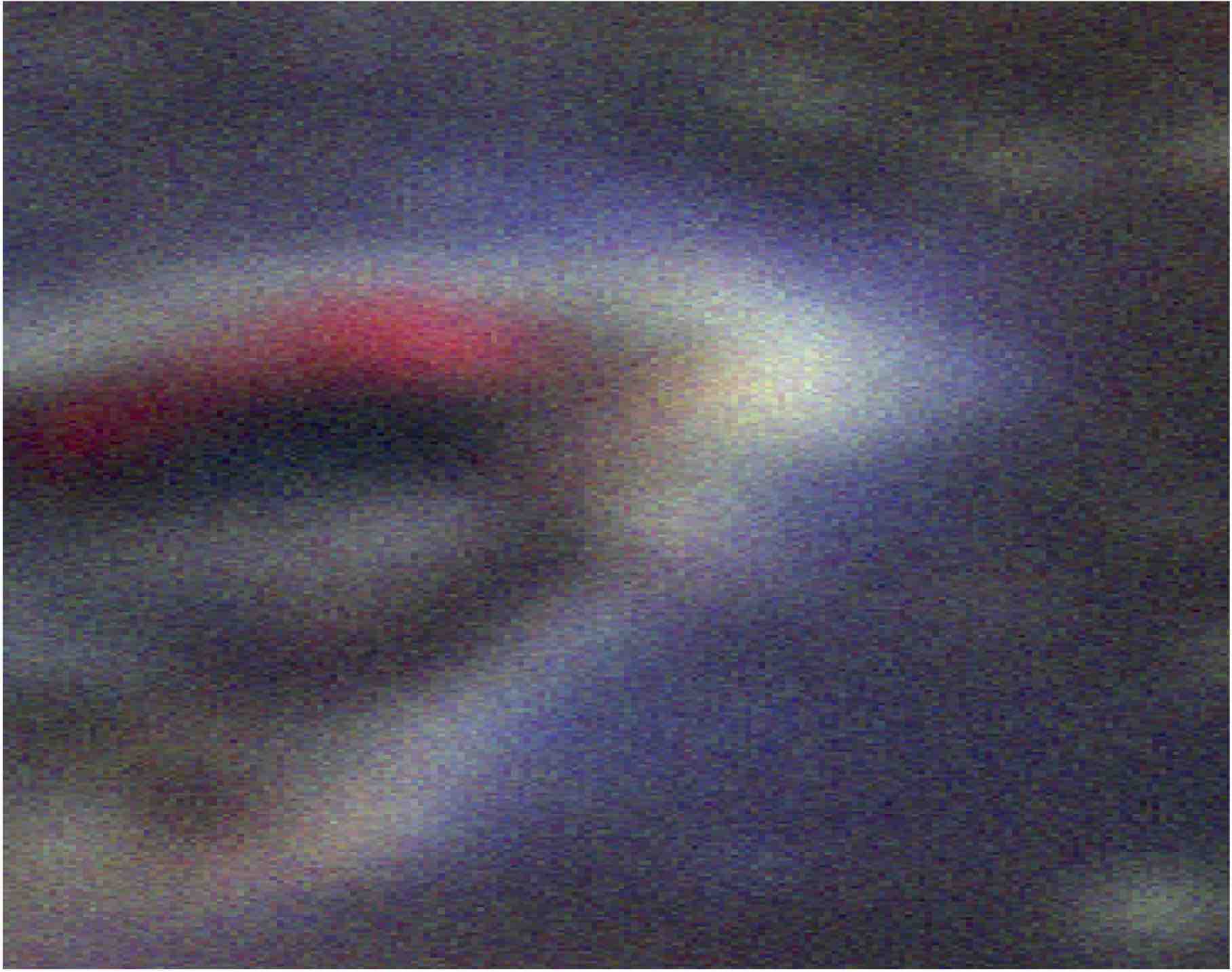} &  \includegraphics[trim={0 0 0 0},clip,width=0.15\textwidth]{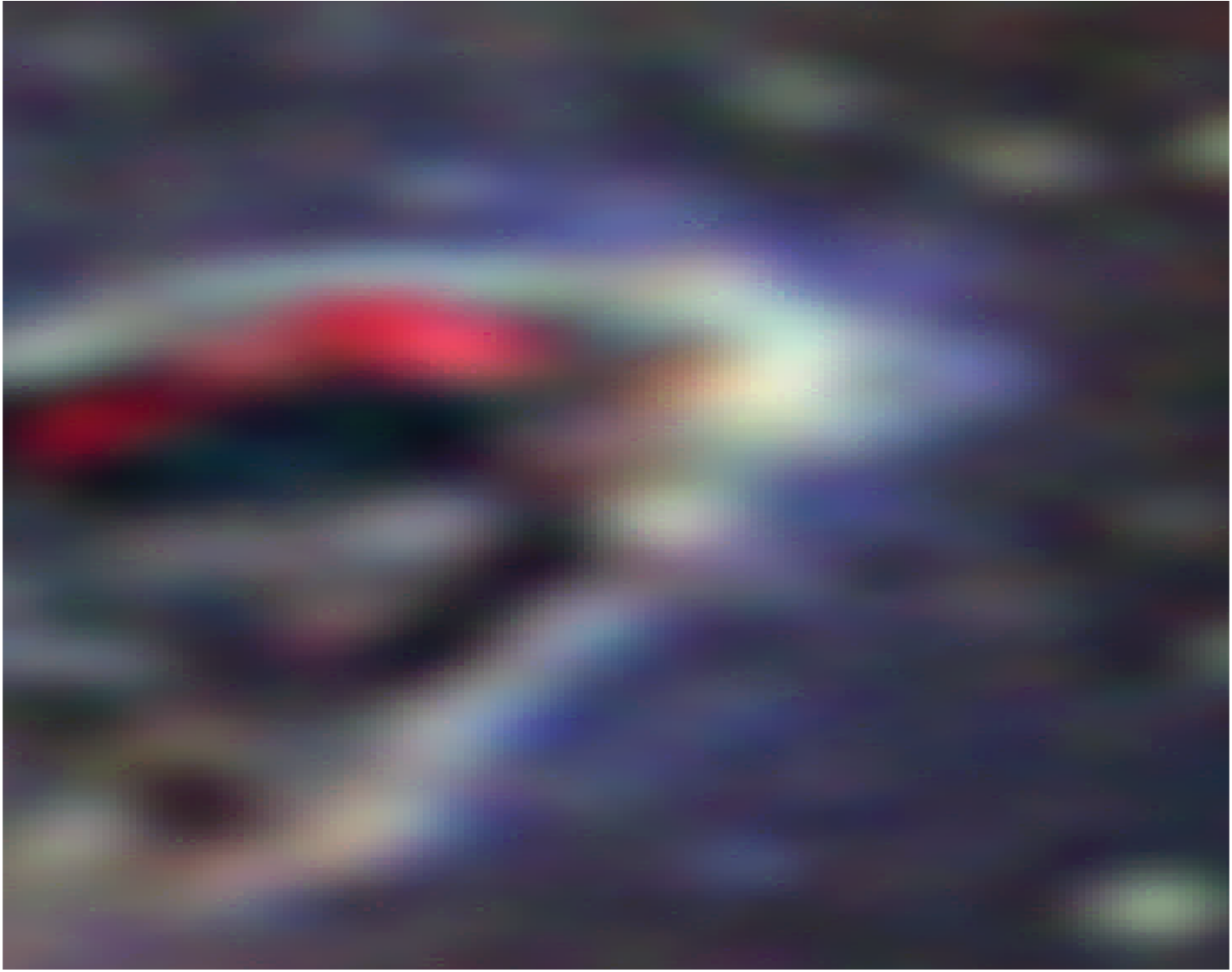} & \includegraphics[trim={0 0 0 0},clip,width=0.15\textwidth]{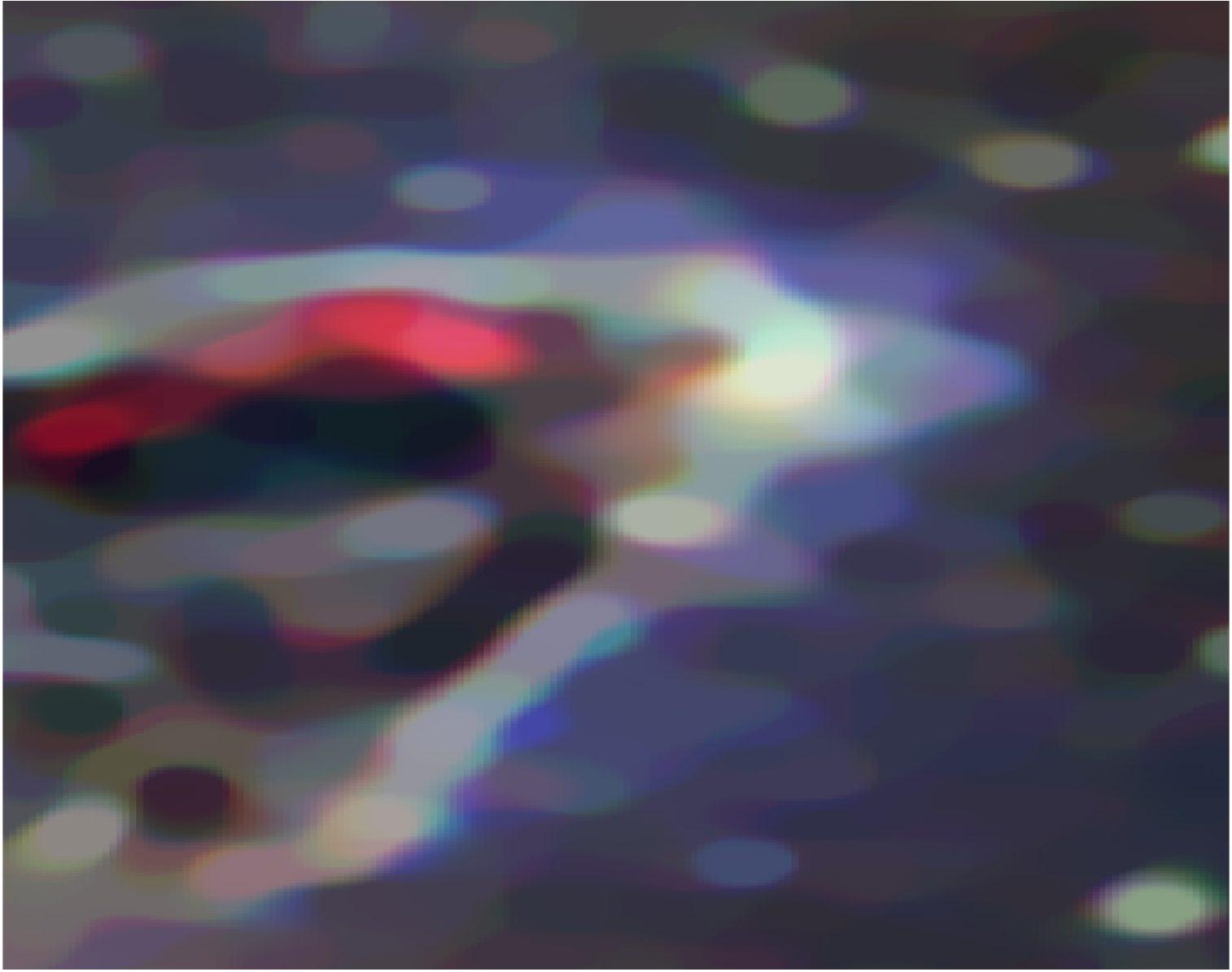}  \\
        \end{tabular}
    \end{center}
    \caption{Deblurring $\ell_{1,2}$-TV for the Pillars of Creation image. Small crop of the image after 2 iterations and after 50 iterations for FISTA (top row) and IML FISTA (bottom row) compared to the original ($x$) and degraded ($z$) images. For each image we report the SNR in dB. \\
    First column: $\sigma$(noise) $= 0.01$; second column: $\sigma$(noise) $= 0.05$. First row: dim(PSF) $=20$, $\sigma$(PSF) $= 3.6$; second row: dim(PSF) $=40$, $\sigma$(PSF) $= 7.3$.} 
    \label{fig:TV_deblurring_imagesIR}
\end{figure}

\paragraph{Experimental performance for different degradation levels} In each of the following figures, the organization of the four plots coincides with the configurations in table \ref{tab:scenarios_blur}.  
For each of them, we tested two regularizations: $\ell_{1,2}$-TV and $\ell_{1,2}$-NLTV. Because the relative behaviour of IML FISTA with respect to FISTA is similar for the two regularizations, for the sake of conciseness, we only report here the results obtained with the $\ell_{1,2}$-TV prior. 
Figure \ref{fig:TV_deblurring_FHIR} and Figure \ref{fig:TV_deblurring_imagesIR} provide a first set of results for the $2048\times 2048$ Pillars of Creation image. We focus in the following on the 25 first iterations as the main gain provided by the proposed method is obtained at the start of the optimization.
We can see that in all cases, the decreasing of the objective function of IML FISTA is faster than that of FISTA.  %

Given the cost of estimating proximity operators for TV and NLTV based regularizations, the computational overhead of a multilevel step is almost negligible, as we expected (cf. Figure \ref{fig:TV_deblurring_FHIR}). Thus, the two low cost coarse corrections are sufficient for our algorithm to gain an advantage that FISTA cannot recover without decreasing the tolerance on the approximation of the proximity operator. %
As a result, this would entail higher computation time at each iteration as the error must decrease with the number of iterations to converge.
Most interestingly, if we compare the methods at the very early stages of the optimization process, after the same number of iterations, IML FISTA reaches a much lower value for the objective function, leading to a much better reconstruction. The difference is particularly striking after 2 iterations (Figure \ref{fig:TV_deblurring_imagesIR}). 

One can also notice that increasing the noise (and thus increasing the value of regularization term $\lambda$) degrades the relative performance of our algorithm compared to FISTA. This behaviour was observed in the exact proximal case (with wavelet based regularization \cite{lauga2022_1}) albeit it is far less pronounced here. Similarly, increasing the blur size improves the relative performances of IML FISTA, just like in the case of exact expression for the proximity operator.

We stress that the potential of multilevel strategies, especially for high levels of degradations (i.e., blurring and noise), is particularly evident for large scale images: on smaller problems the overhead introduced by the method may overcome the gain obtained in passing to lower resolutions. This is evident when looking
at the results obtained in the same degradation context for the Yellow Car image of size $512 \times 512 \times 3$. %
We reproduce in Figure \ref{fig:TV_deblurring_FHIM} 
the evolution of the objective function when the regularization is the $\ell_{1,2}$-TV norm. With this problem of small dimension, the relative performances of IML FISTA compared to those of FISTA are less impressive than in the case of the Pillars of Creation image.%
\begin{figure}
    \centering
    \includegraphics[trim={2.5em 2em 0 2.7em},clip,width=0.395\textwidth]{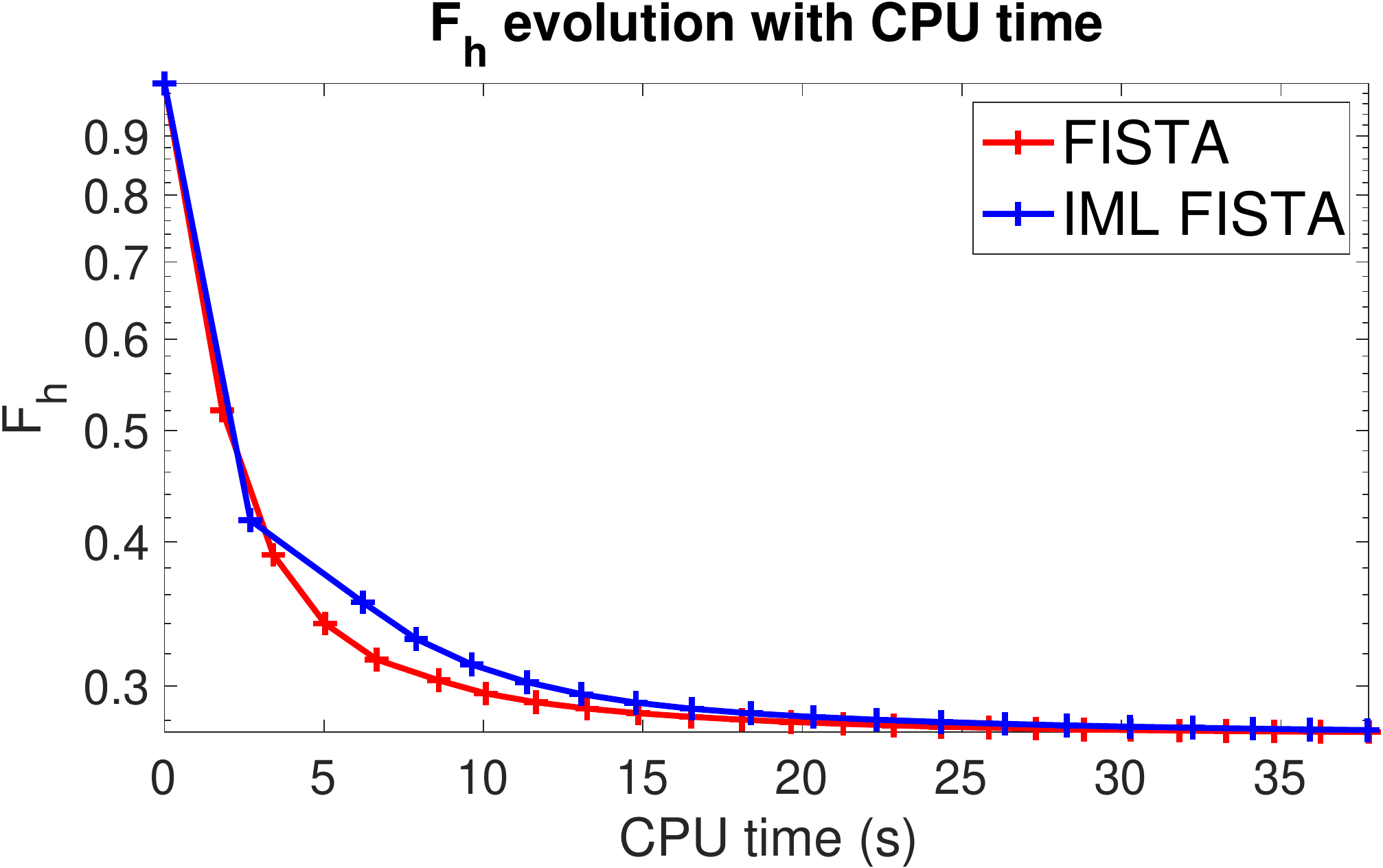}
    \hspace{2em}
    \includegraphics[trim={2.5em 2em 0 2.7em},clip,width=0.4\textwidth]{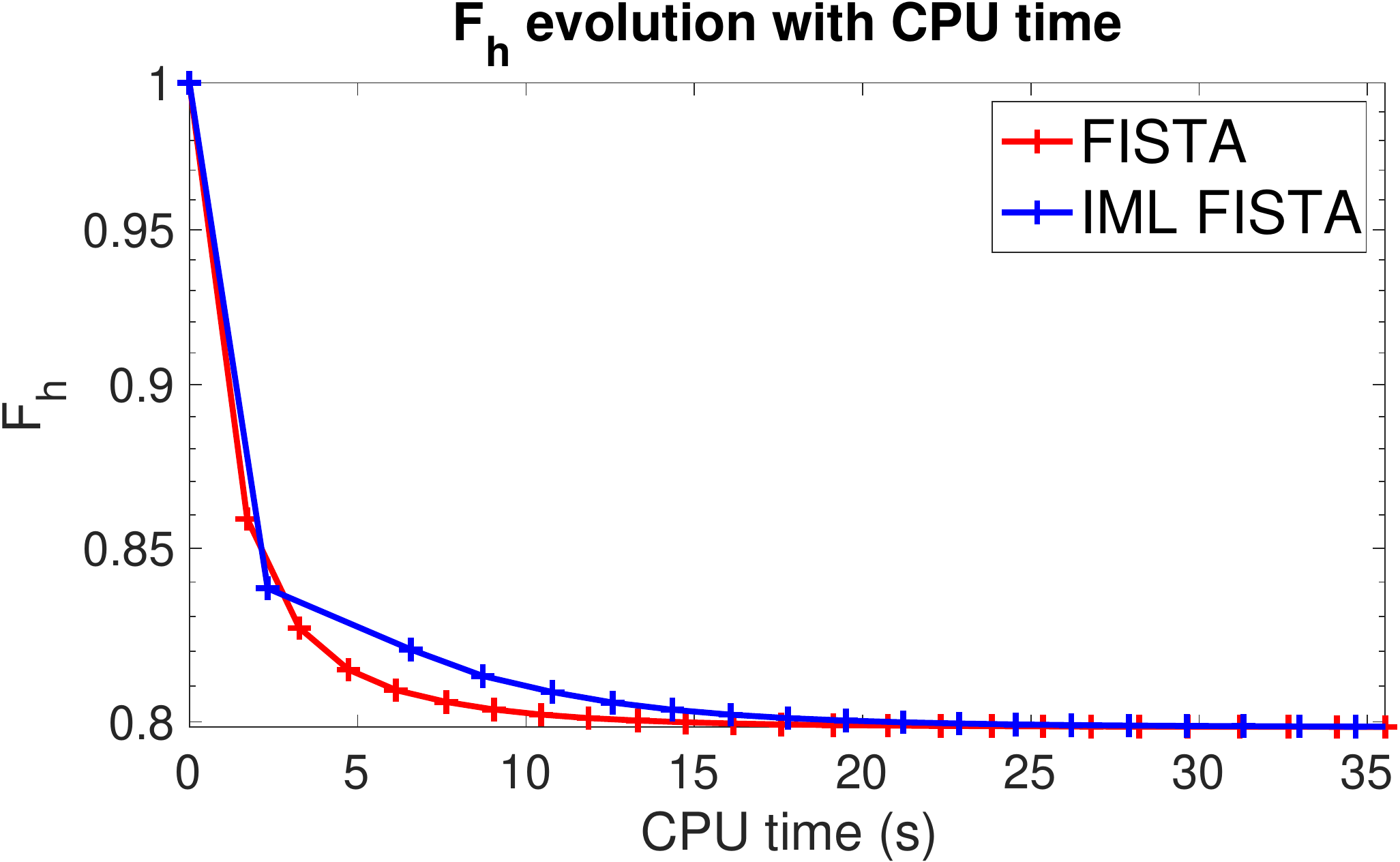} \\ \vspace{0.1em}
    \includegraphics[trim={2.5em 2em 0 2.7em},clip,width=0.395\textwidth]{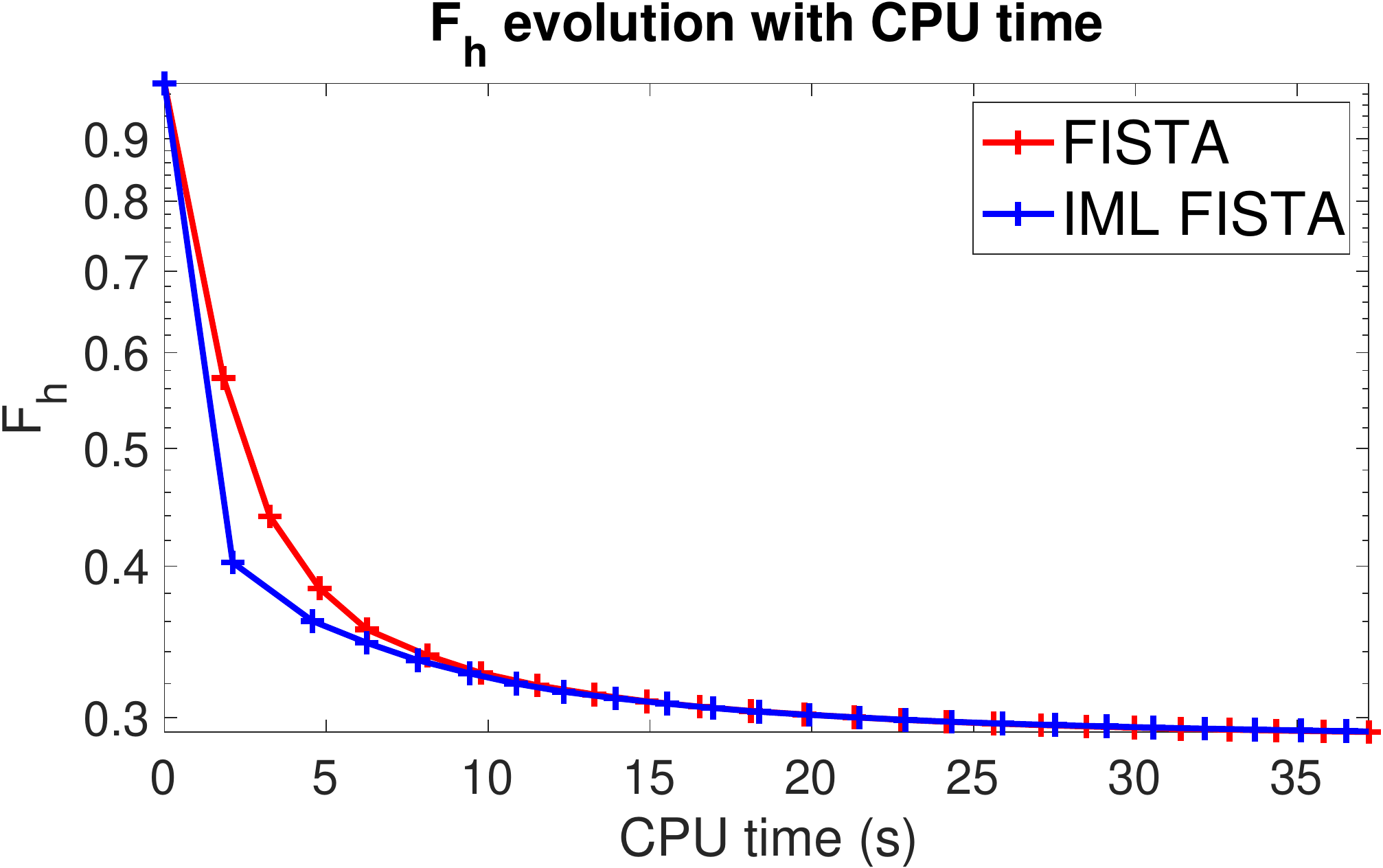}
    \hspace{1em}
    \includegraphics[trim={2.5em 2em 0 2.7em},clip,width=0.4\textwidth]{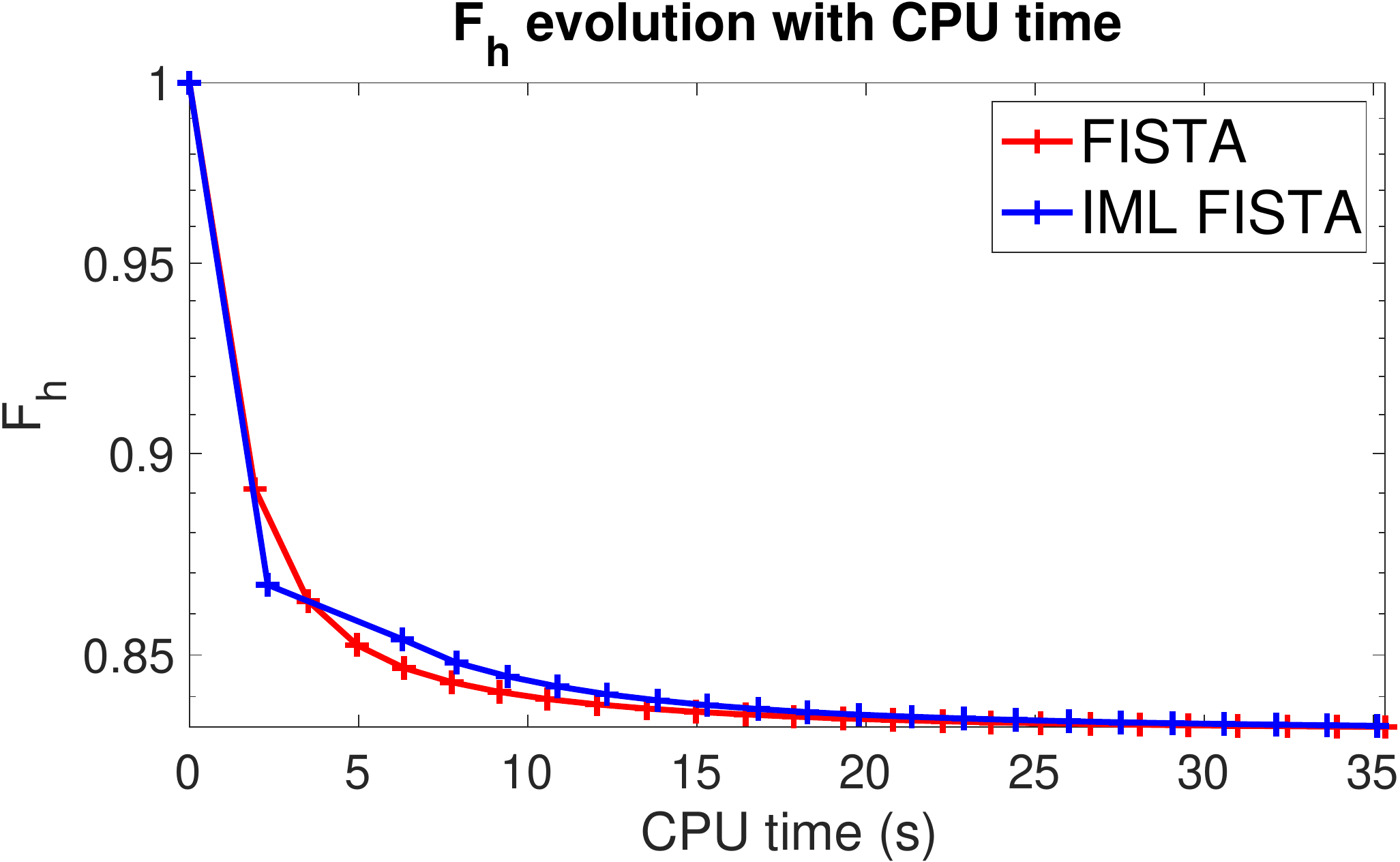}
    \caption{Deblurring $\ell_{1,2}$-TV for the Yellow Car image (small dimensional image). Objective function (normalized with initialization value) vs CPU time (sec). First column: $\sigma$(noise) $= 0.01$; second column: $\sigma$(noise) $= 0.05$. First row: dim(PSF) $=20$, $\sigma$(PSF) $= 3.6$; second row: dim(PSF) $=40$, $\sigma$(PSF) $= 7.3$. For each plot, the crosses represent iterations of the algorithm.\vspace{-1.5em}} 
    \label{fig:TV_deblurring_FHIM}
\end{figure}
\subsection{IML FISTA results on image inpainting}
\begin{figure}
    \centering
    \includegraphics[trim={2.5em 2em 0 2.7em},clip,width=0.4\textwidth]{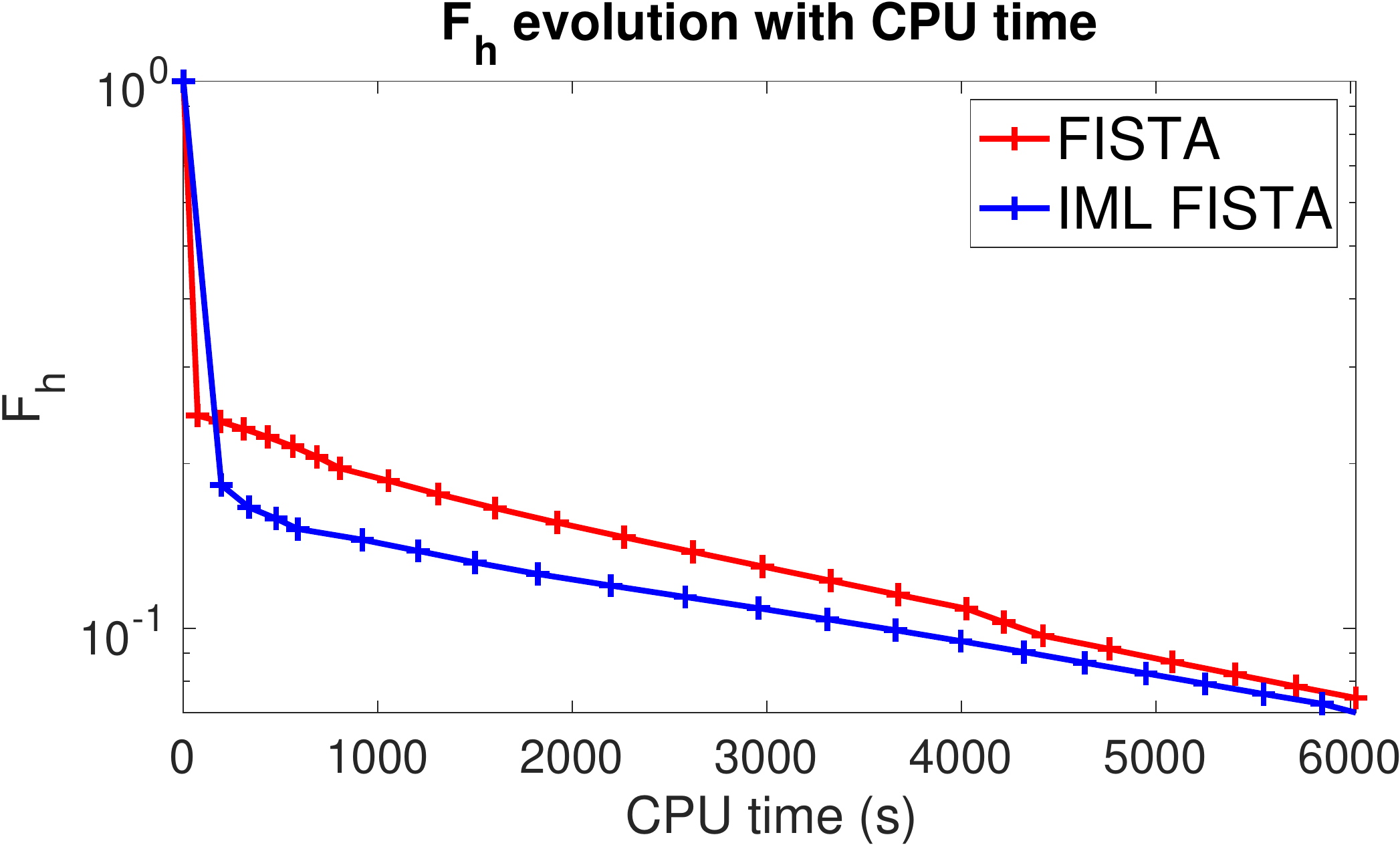}
    \hspace{2em}
    \includegraphics[trim={2.5em 2em 0 2.7em},clip,width=0.395\textwidth]{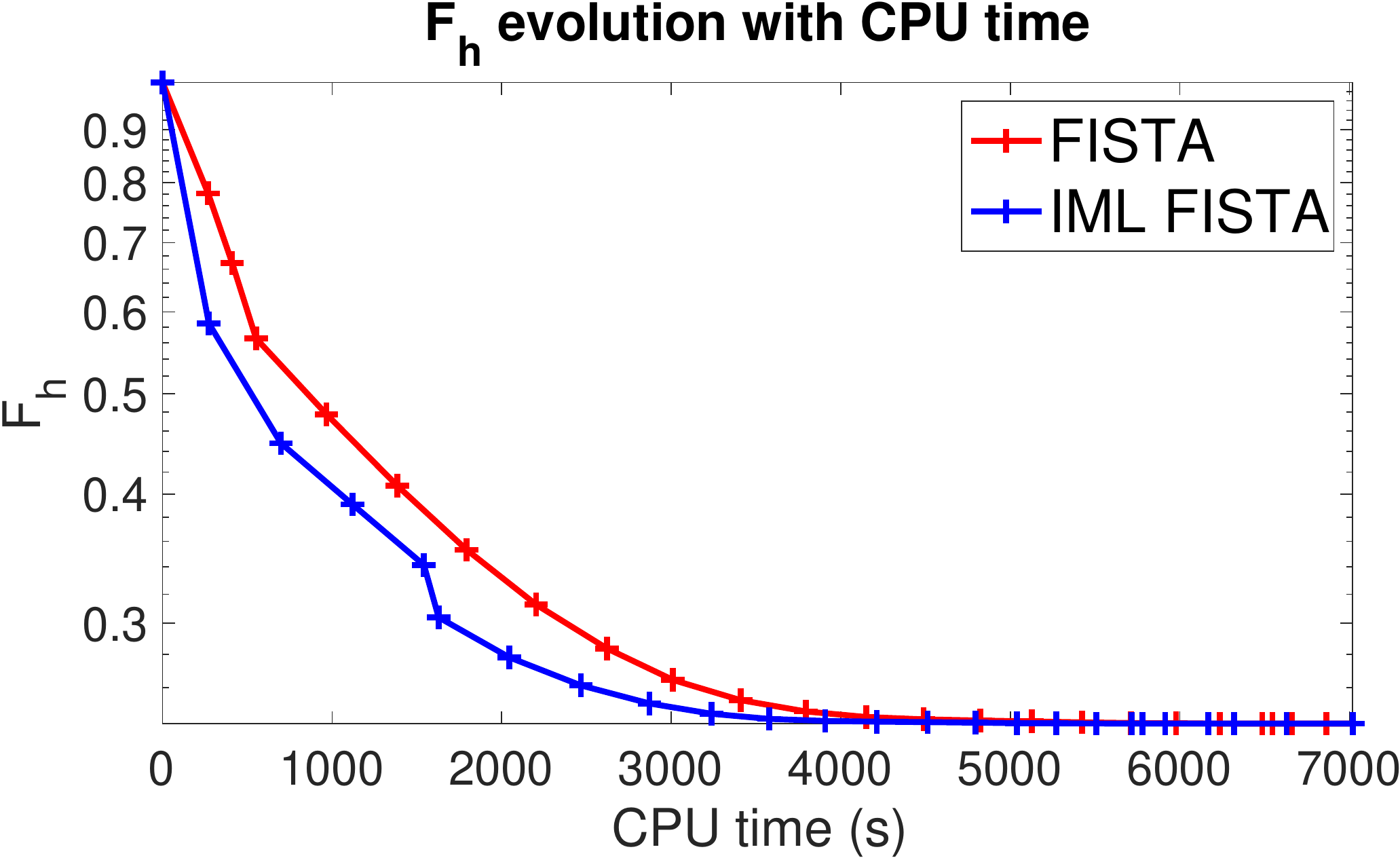} \\ \vspace{0.1em}
    \includegraphics[trim={2.5em 2em 0 2.7em},clip,width=0.4\textwidth]{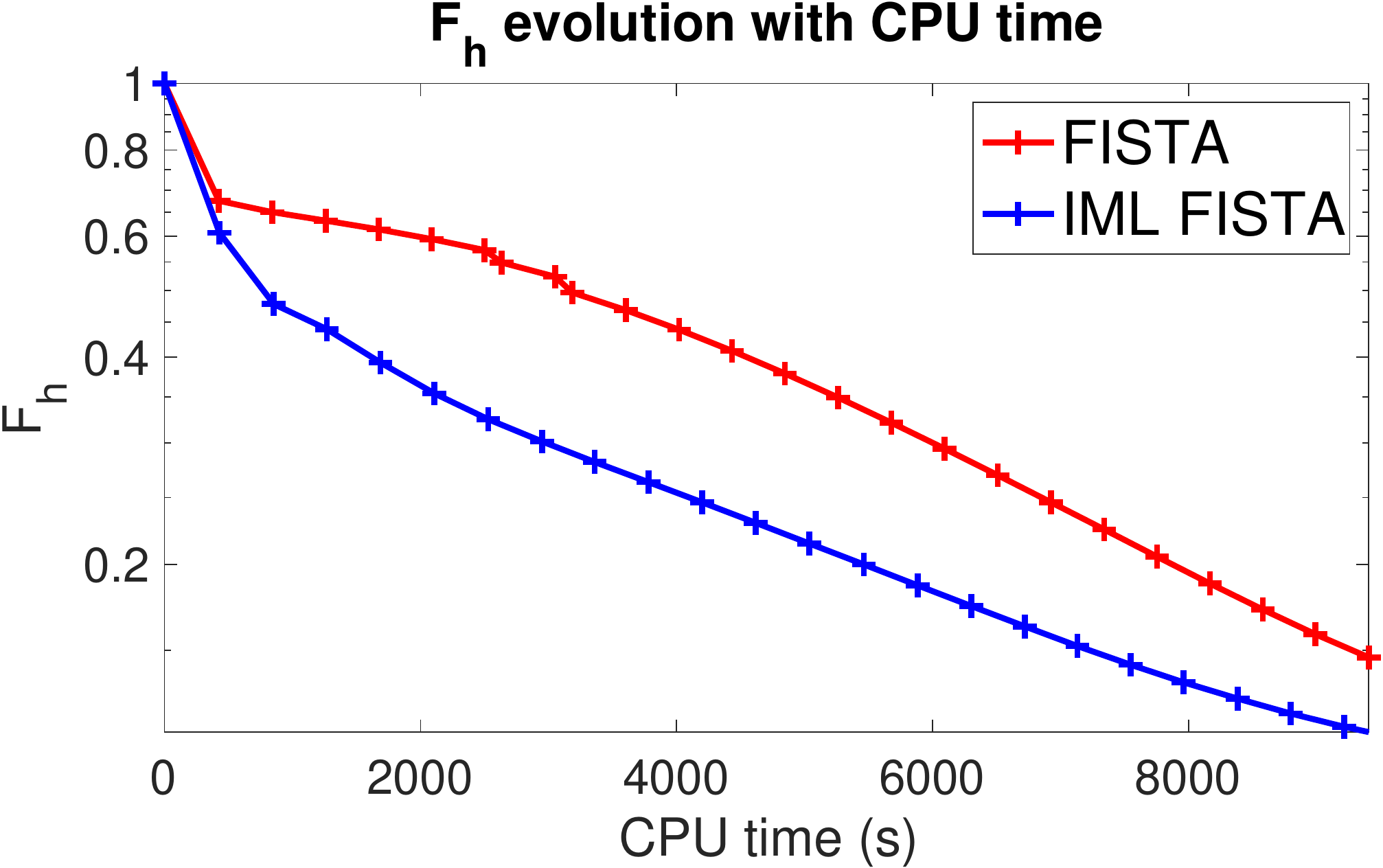}
    \hspace{2em}
    \includegraphics[trim={2.5em 2em 0 2.7em},clip,width=0.395\textwidth]{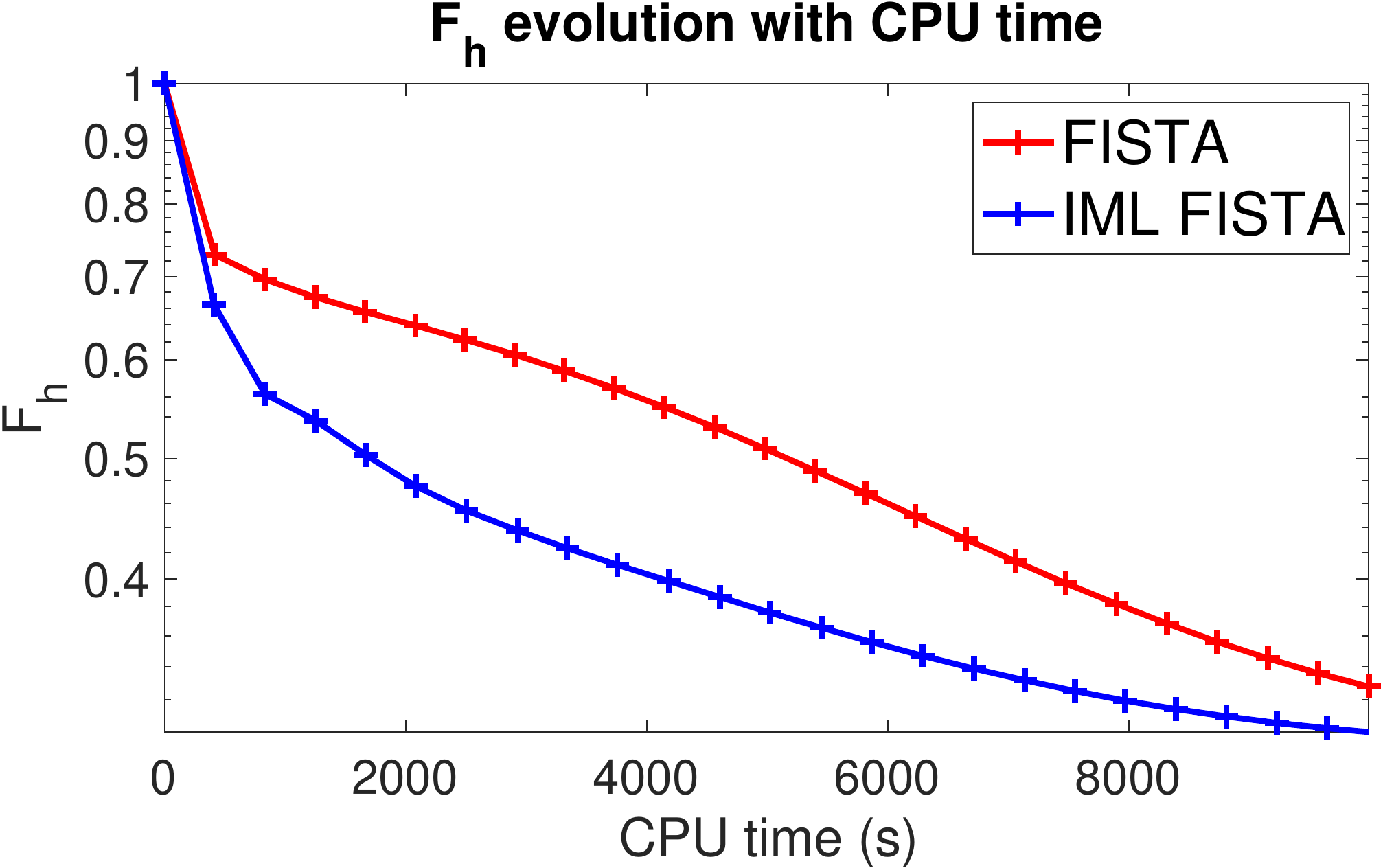}
    \caption{Inpainting $\ell_{1,2}$-NLTV for the Pillars of Creation image. Objective function (normalized with initialization value) vs CPU time (sec). First column: $\sigma$(noise) $= 0.01$; second column: $\sigma$(noise) $= 0.05$. First row: missing pixels $50\%$; second row: missing pixels $90\%$. For each plot, the crosses represent iterations of the algorithm. \vspace{-1em}} 
    \label{fig:NLTV_inpainting_FHIR}
\end{figure}
\begin{figure}
    \begin{center}
        \setlength{\tabcolsep}{3pt}
        \begin{tabular}{cccccc}
        \ftn $x$ & \ftn $x_{h,2}^{\text{FISTA}}(6.1)$ & \ftn $x_{h,50}^{\text{FISTA}}(20.5)$ & \ftn $x$ & \ftn$x_{h,2}^{\text{FISTA}}(7.4)$ & \ftn$x_{h,50}^{\text{FISTA}}(19.2)$ \\
         \includegraphics[trim={0 0 0 0},clip,width=0.15\textwidth]{X-crop.pdf} & \includegraphics[trim={0 0 0 0},clip,width=0.15\textwidth]{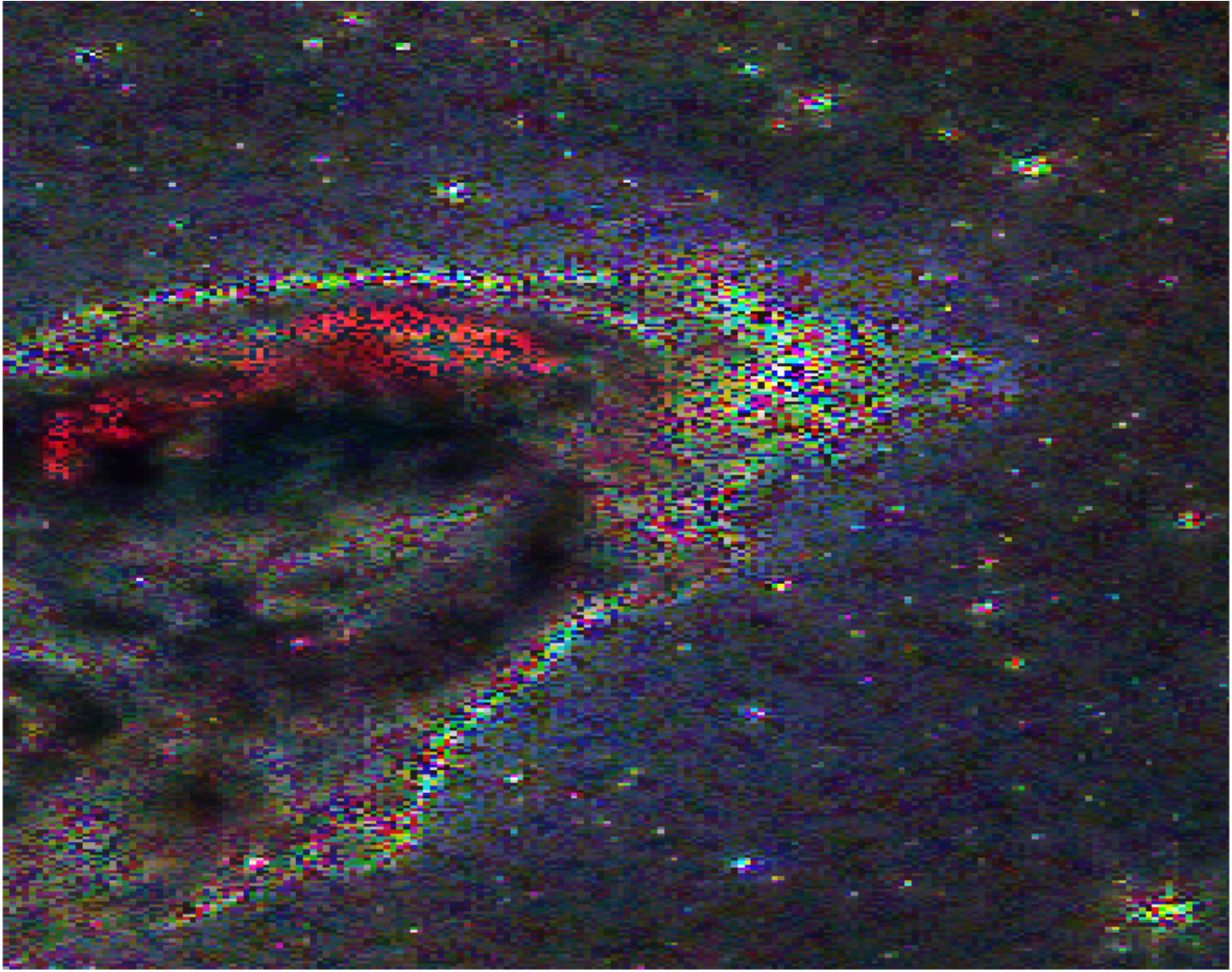} & \includegraphics[trim={0 0 0 0},clip,width=0.15\textwidth]{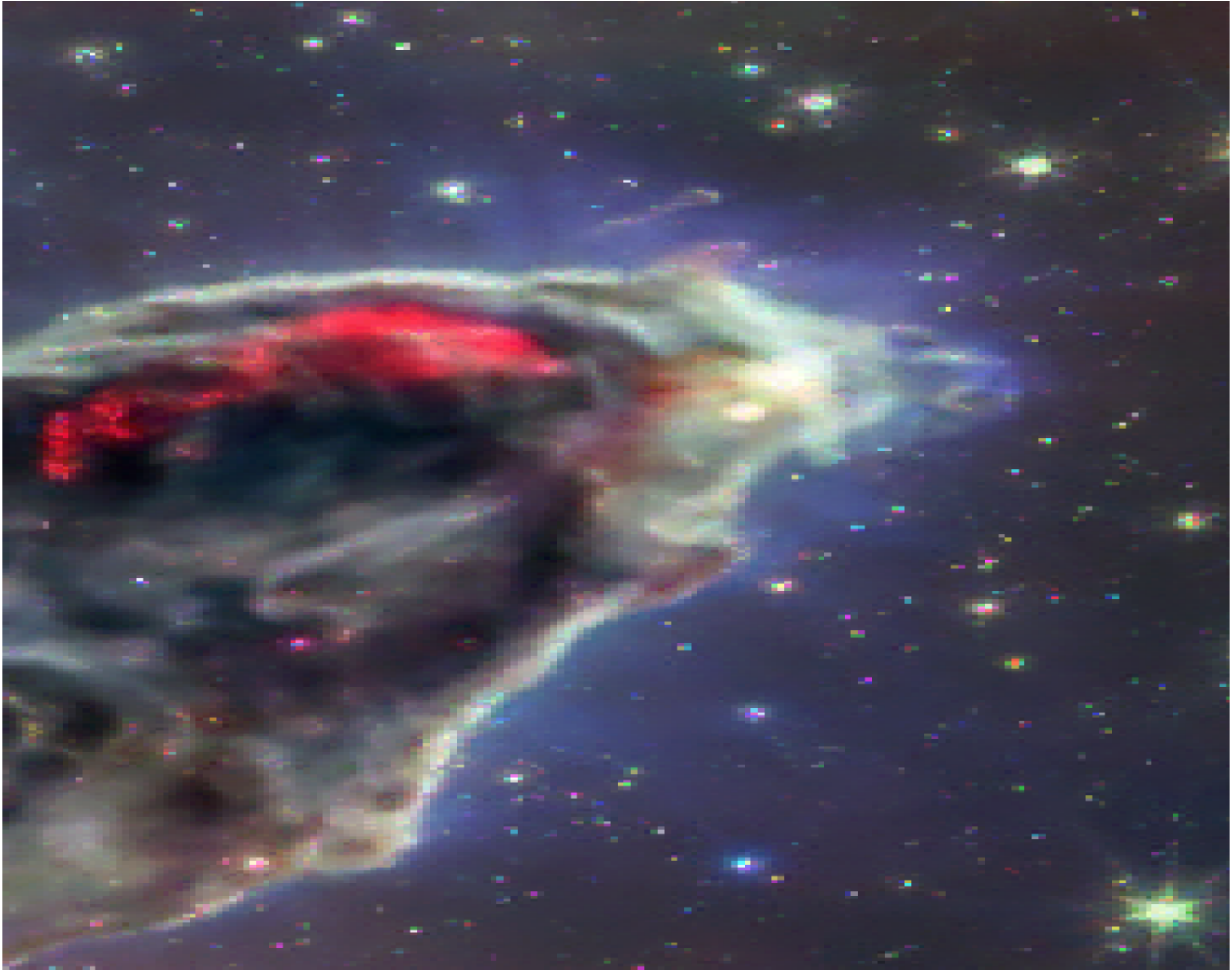} & \includegraphics[trim={0 0 0 0},clip,width=0.15\textwidth]{X-crop.pdf} 
         & \includegraphics[trim={0 0 0 0},clip,width=0.15\textwidth]{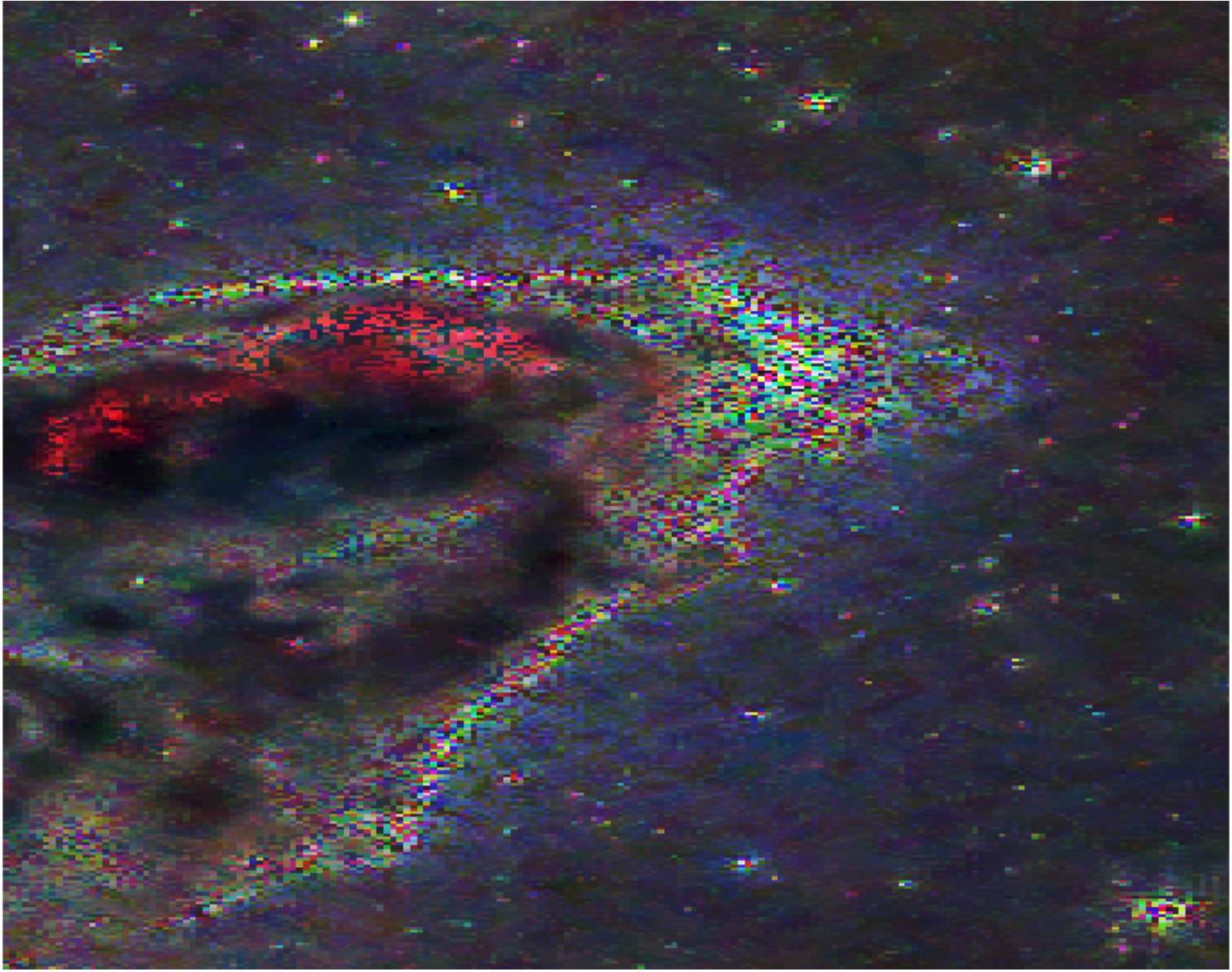} & \includegraphics[trim={0 0 0 0},clip,width=0.15\textwidth]{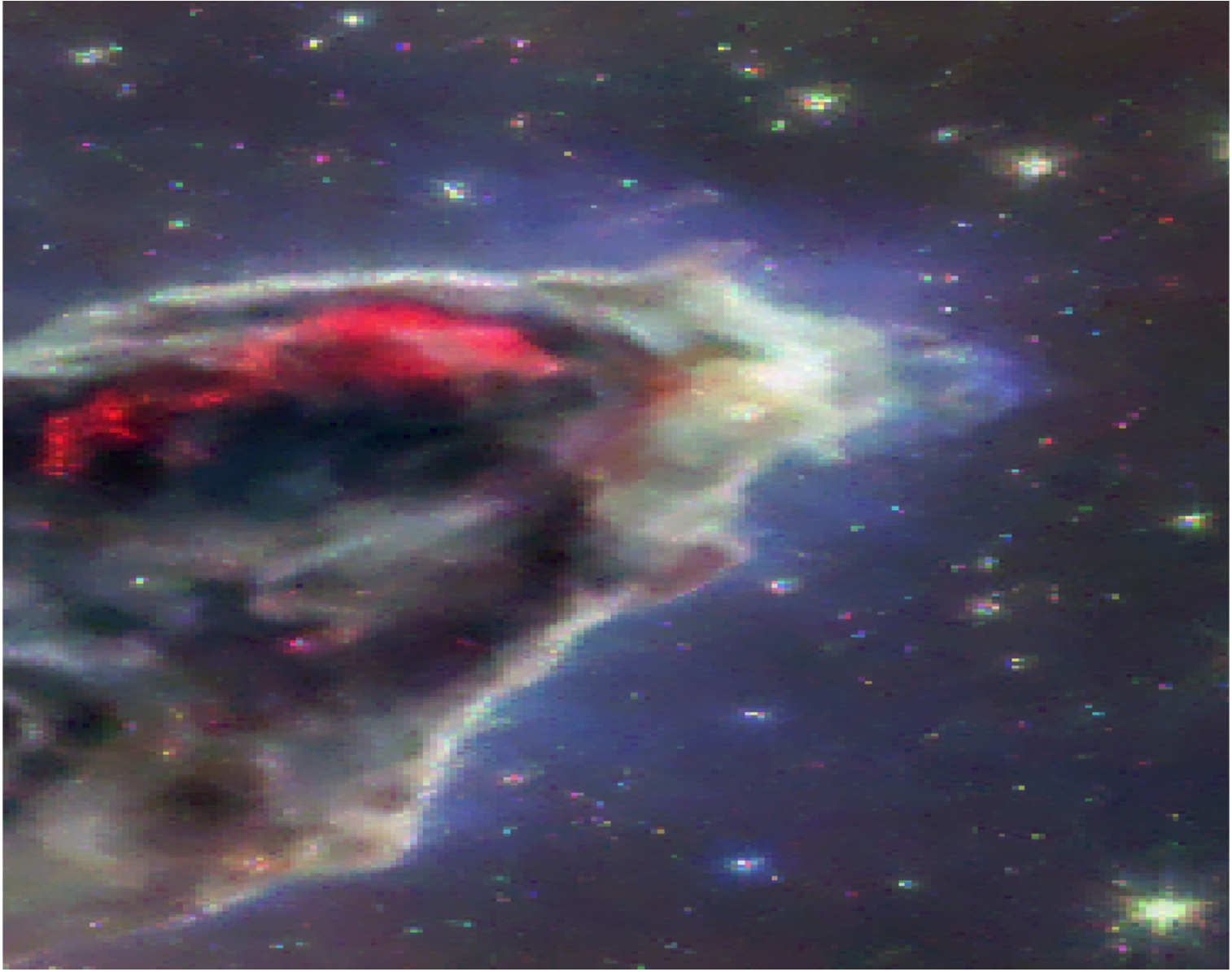}
         \\
         \ftn$z(4.9)$ & \ftn$x_{h,2}^{\text{IML FISTA}}(8.8)$ & \ftn$x_{h,50}^{\text{IML FISTA}}(20.5)$ & \ftn $z(4.9)$ & \ftn$x_{h,2}^{\text{IML FISTA}}(10.8)$ & \ftn$x_{h,50}^{\text{IML FISTA}}(19.2)$ 
         \\
         \includegraphics[trim={0 0 0 0},clip,width=0.15\textwidth]{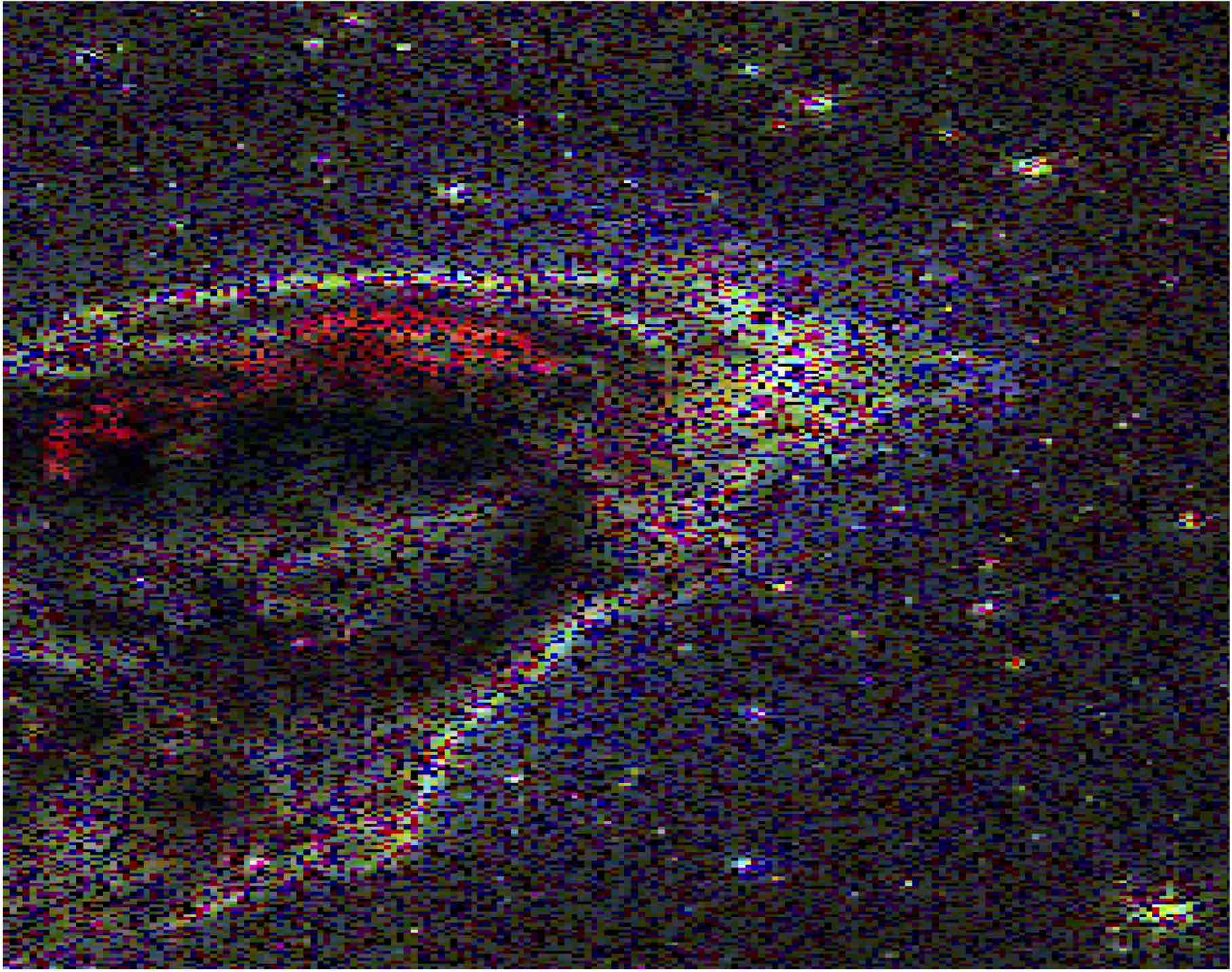} & \includegraphics[trim={0 0 0 0},clip,width=0.15\textwidth]{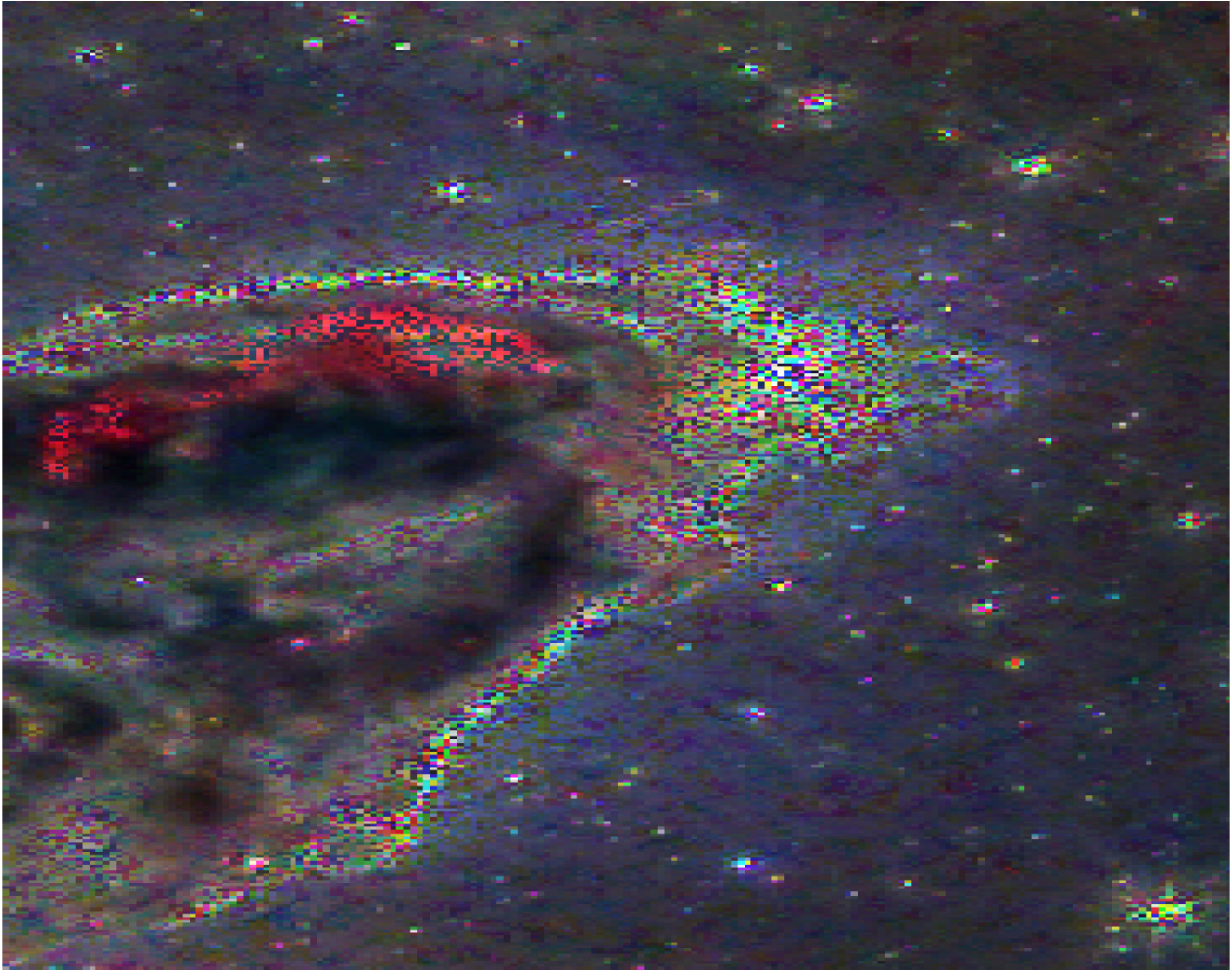} & \includegraphics[trim={0 0 0 0},clip,width=0.15\textwidth]{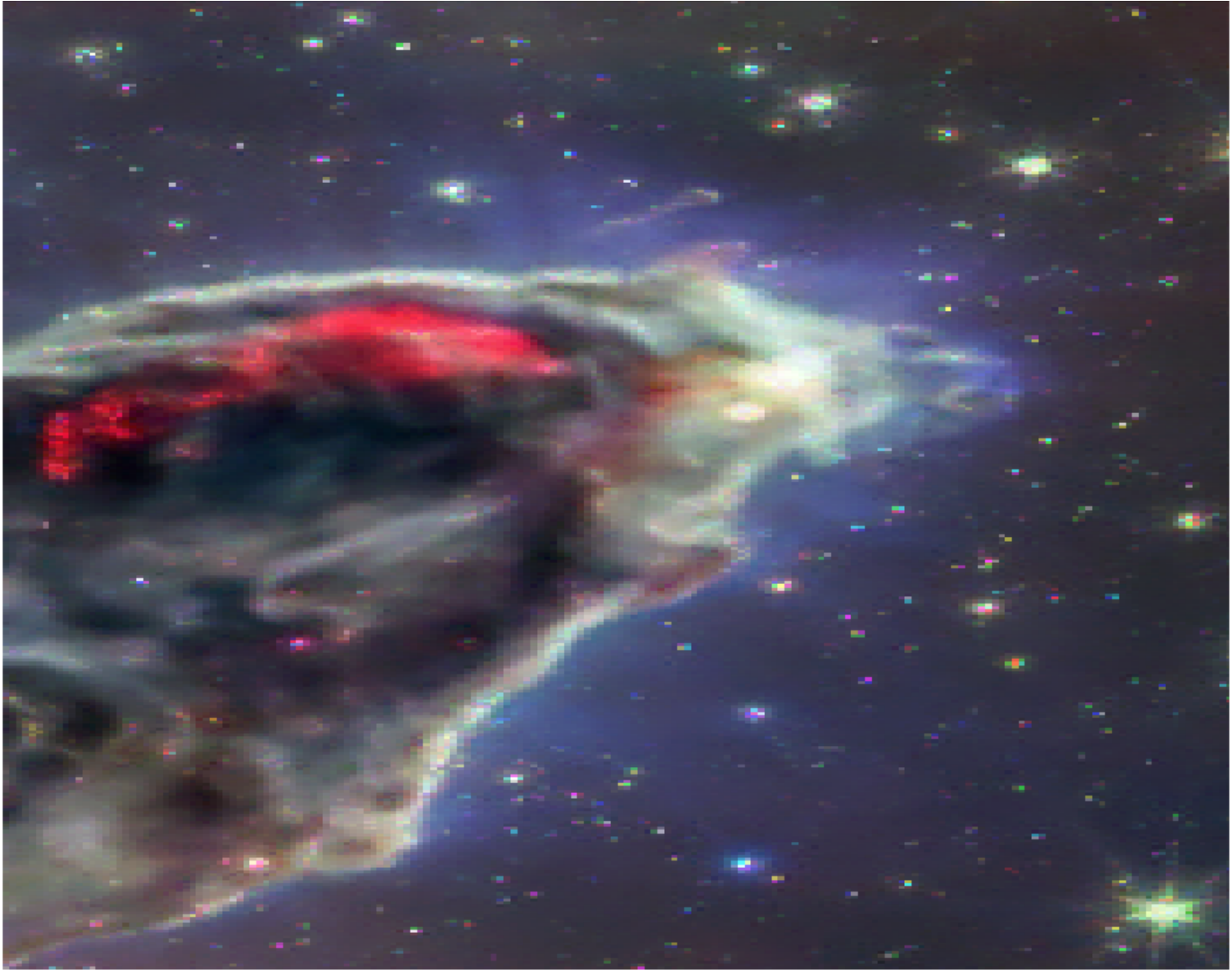} & \includegraphics[trim={0 0 0 0},clip,width=0.15\textwidth]{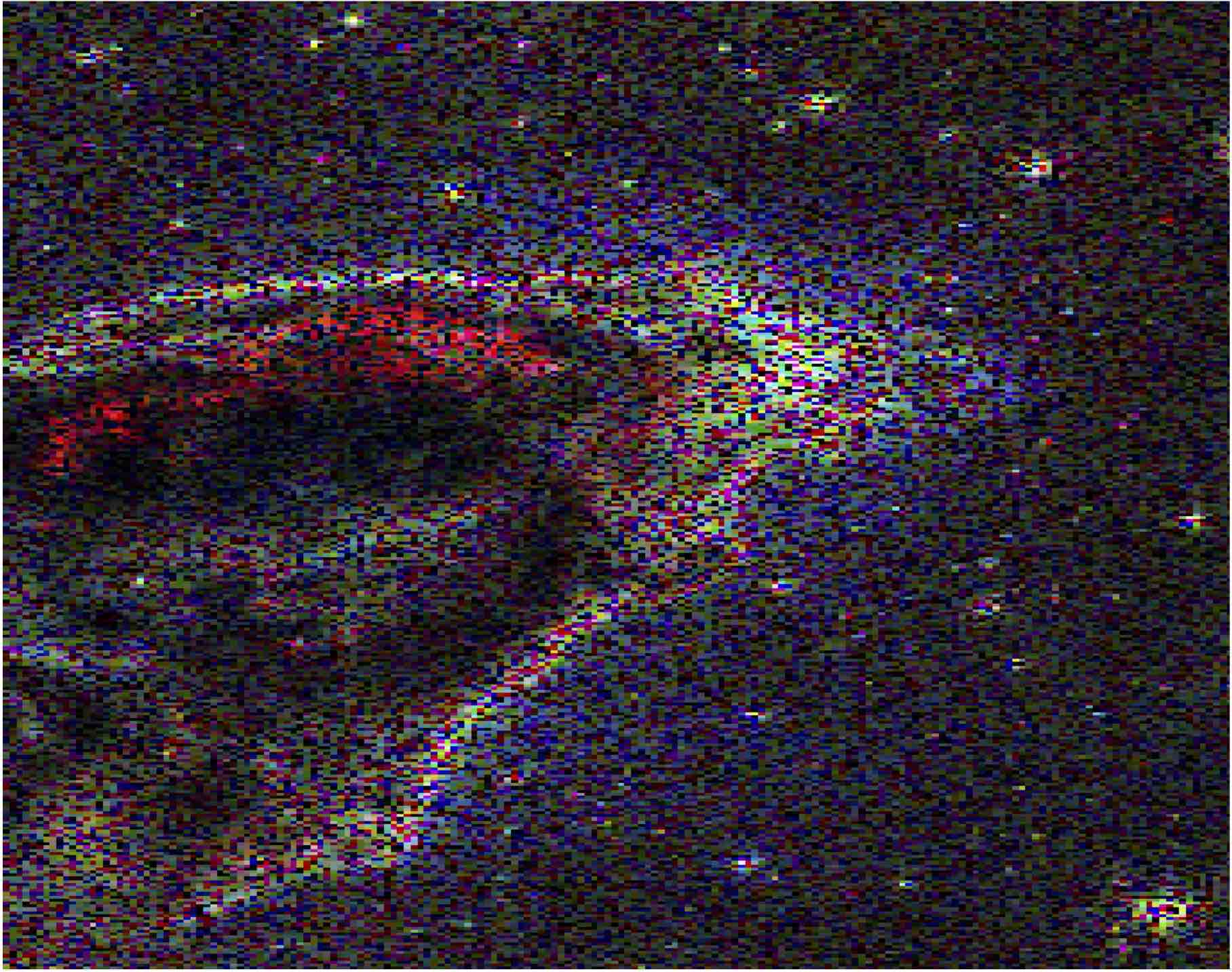} &  \includegraphics[trim={0 0 0 0},clip,width=0.15\textwidth]{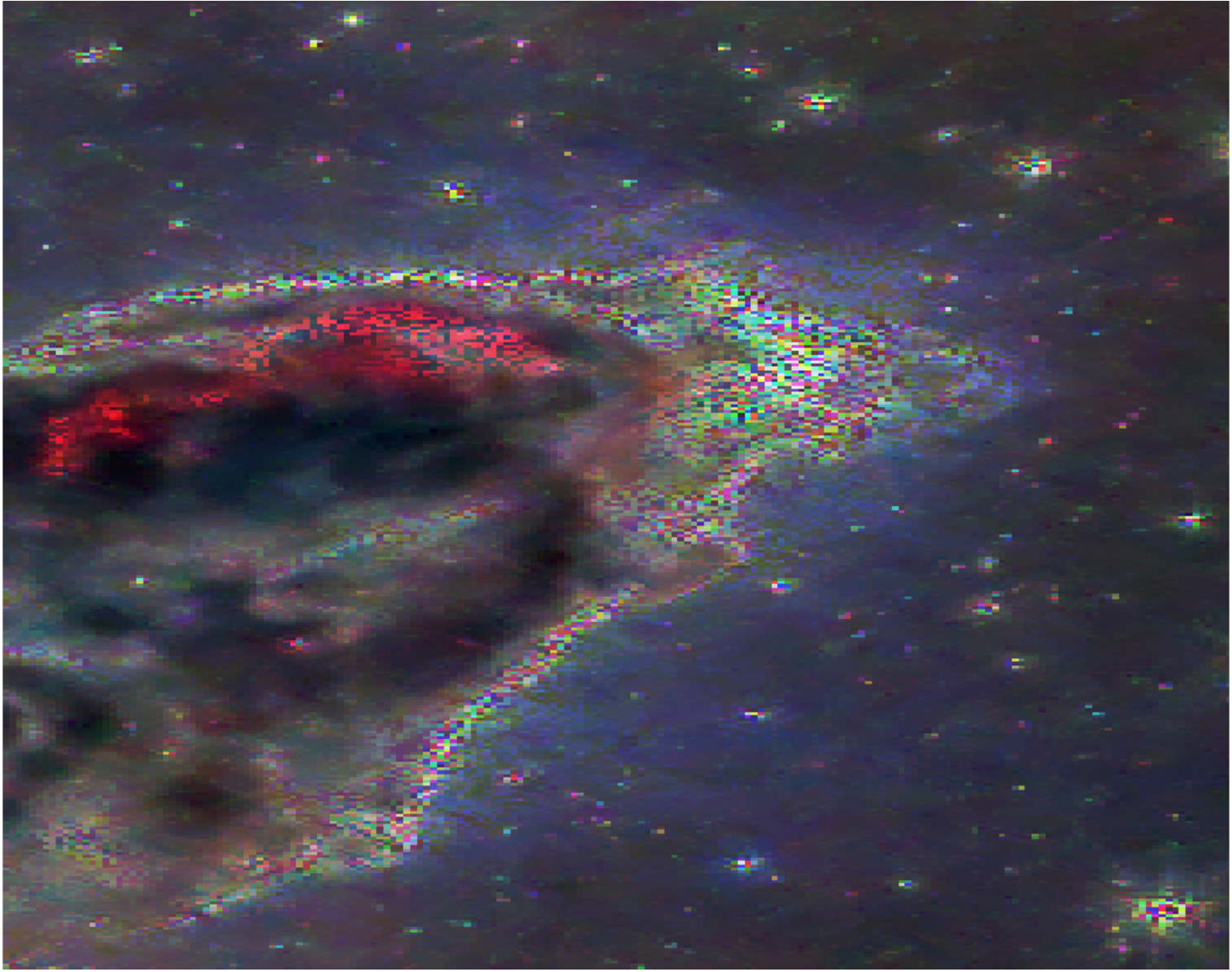} & \includegraphics[trim={0 0 0 0},clip,width=0.15\textwidth]{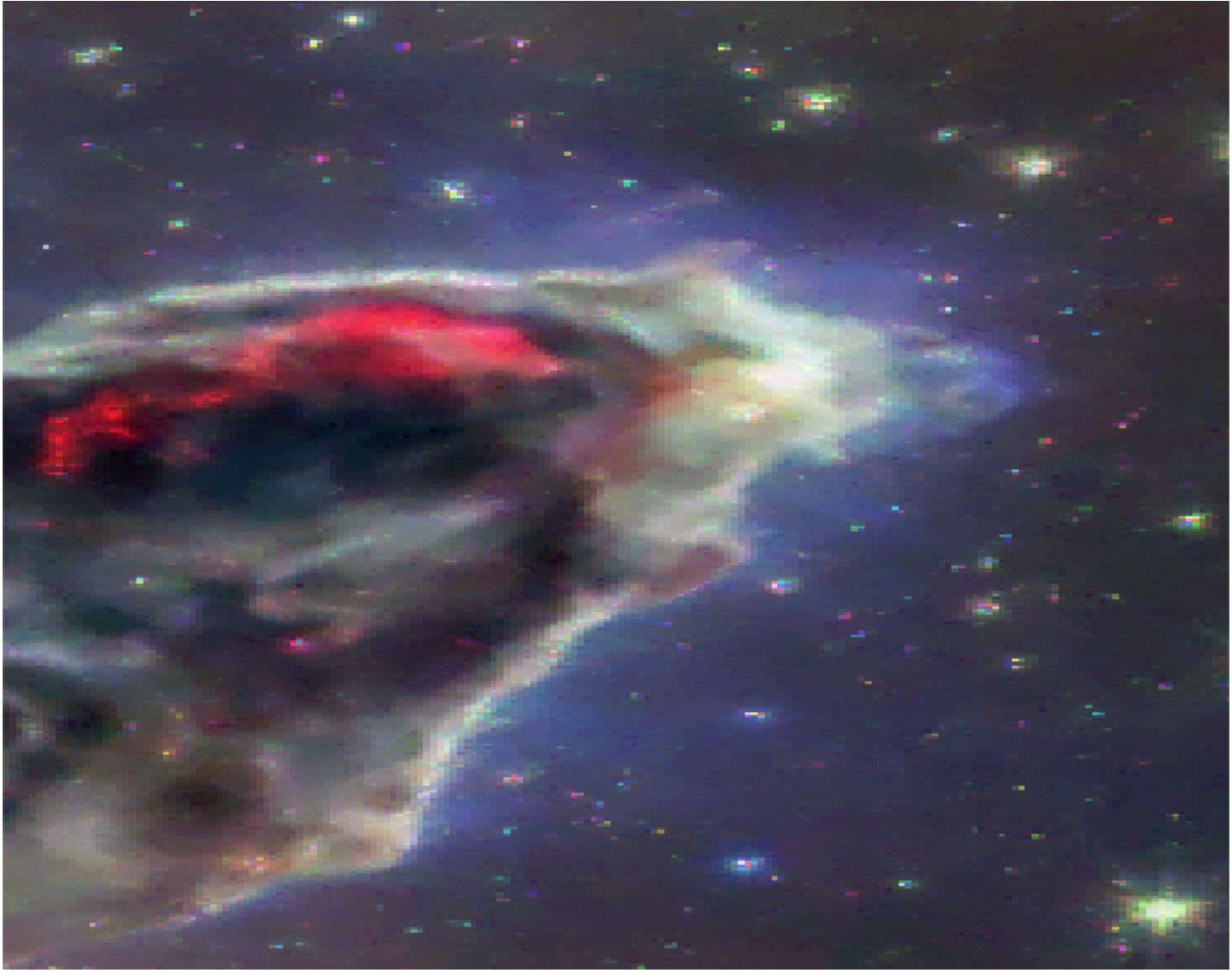}  \\
         \ftn $x$ & \ftn $x_{h,2}^{\text{FISTA}}(1.2)$ & \ftn $x_{h,50}^{\text{FISTA}}(15.3)$ & \ftn $x$ & \ftn $x_{h,2}^{\text{FISTA}}(0.8)$ & \ftn $x_{h,50}^{\text{FISTA}}(15.2)$ 
         \\
         \includegraphics[trim={0 0 0 0},clip,width=0.15\textwidth]{X-crop.pdf} & \includegraphics[trim={0 0 0 0},clip,width=0.15\textwidth]{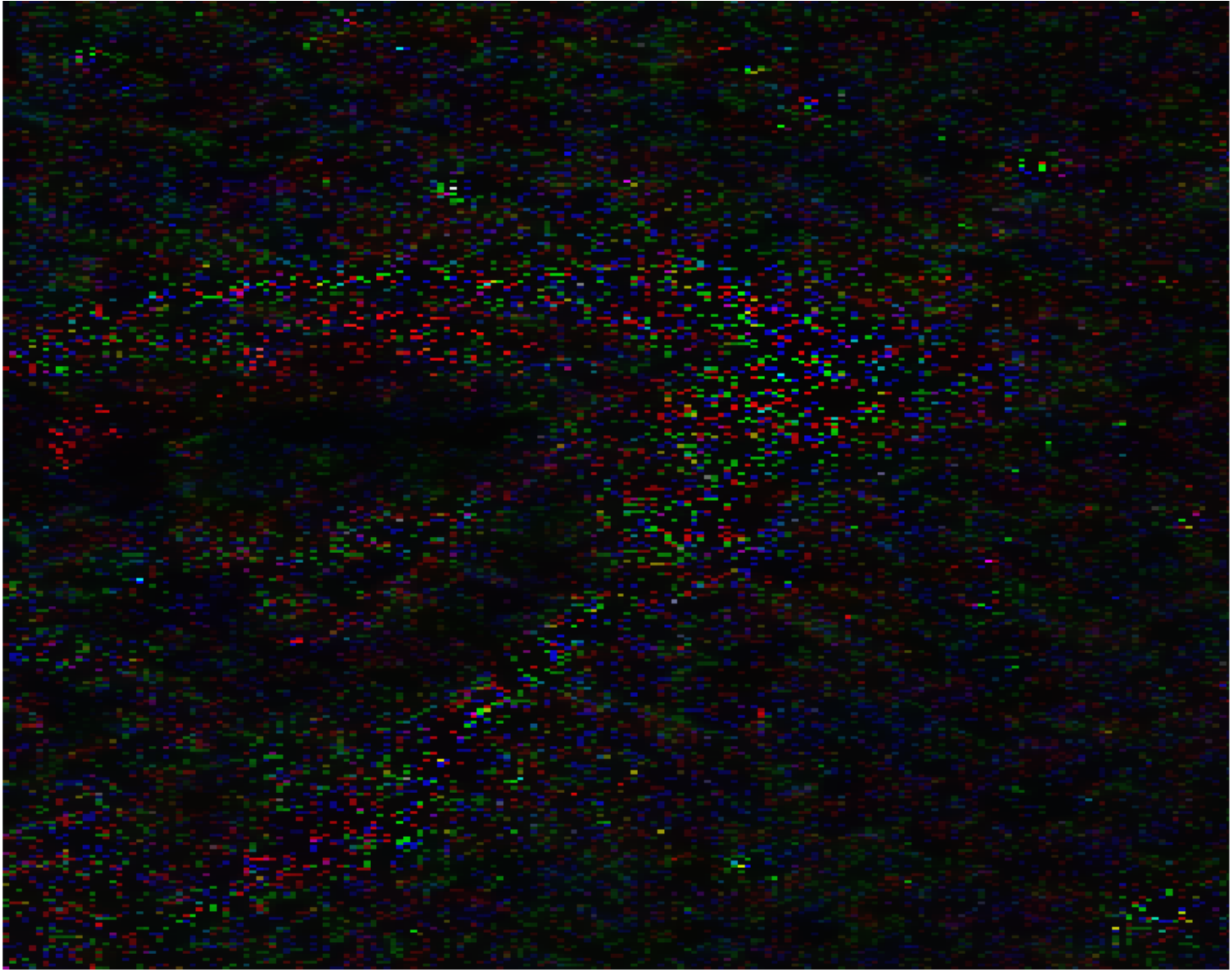} & \includegraphics[trim={0 0 0 0},clip,width=0.15\textwidth]{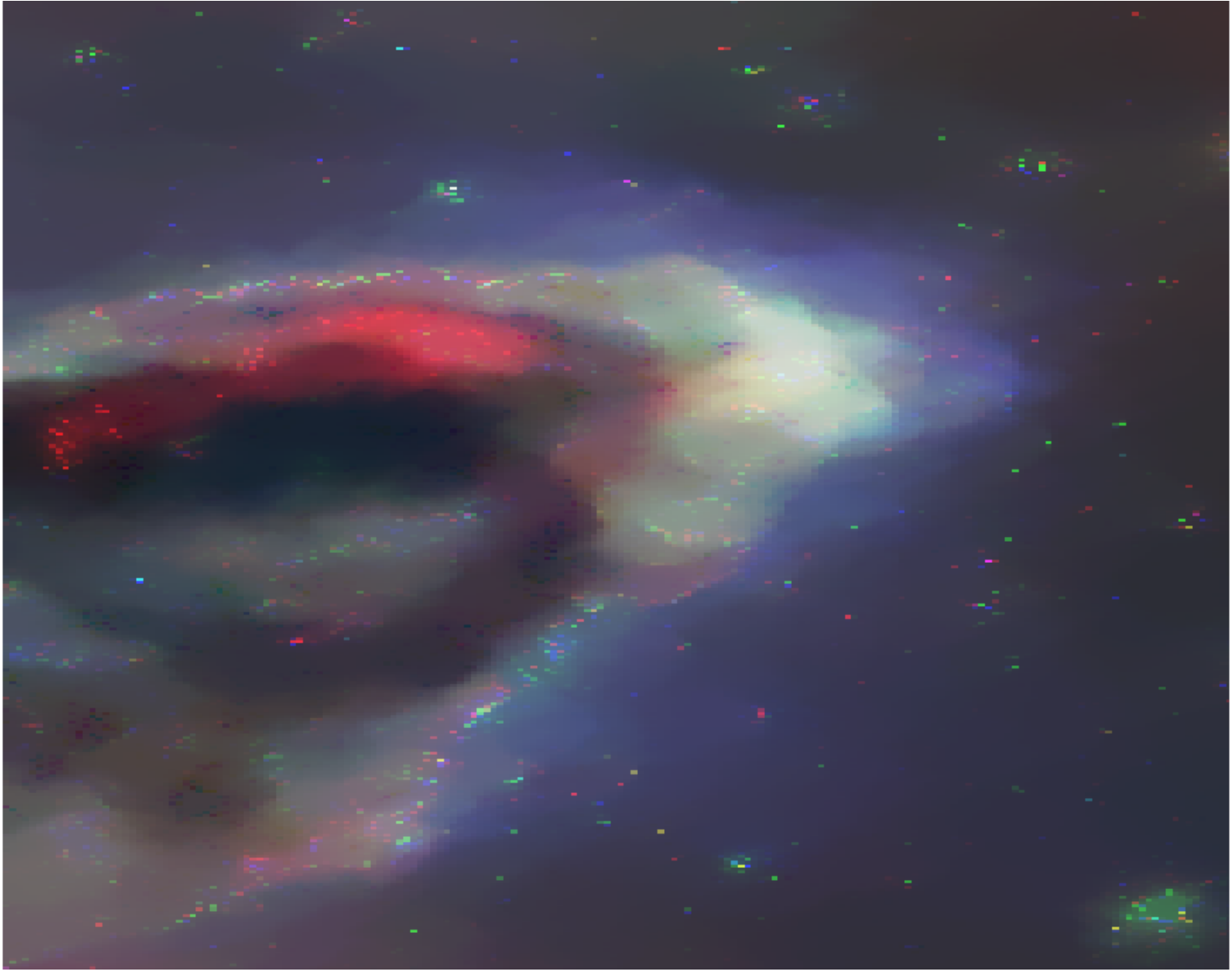} & \includegraphics[trim={0 0 0 0},clip,width=0.15\textwidth]{X-crop.pdf} 
         & \includegraphics[trim={0 0 0 0},clip,width=0.15\textwidth]{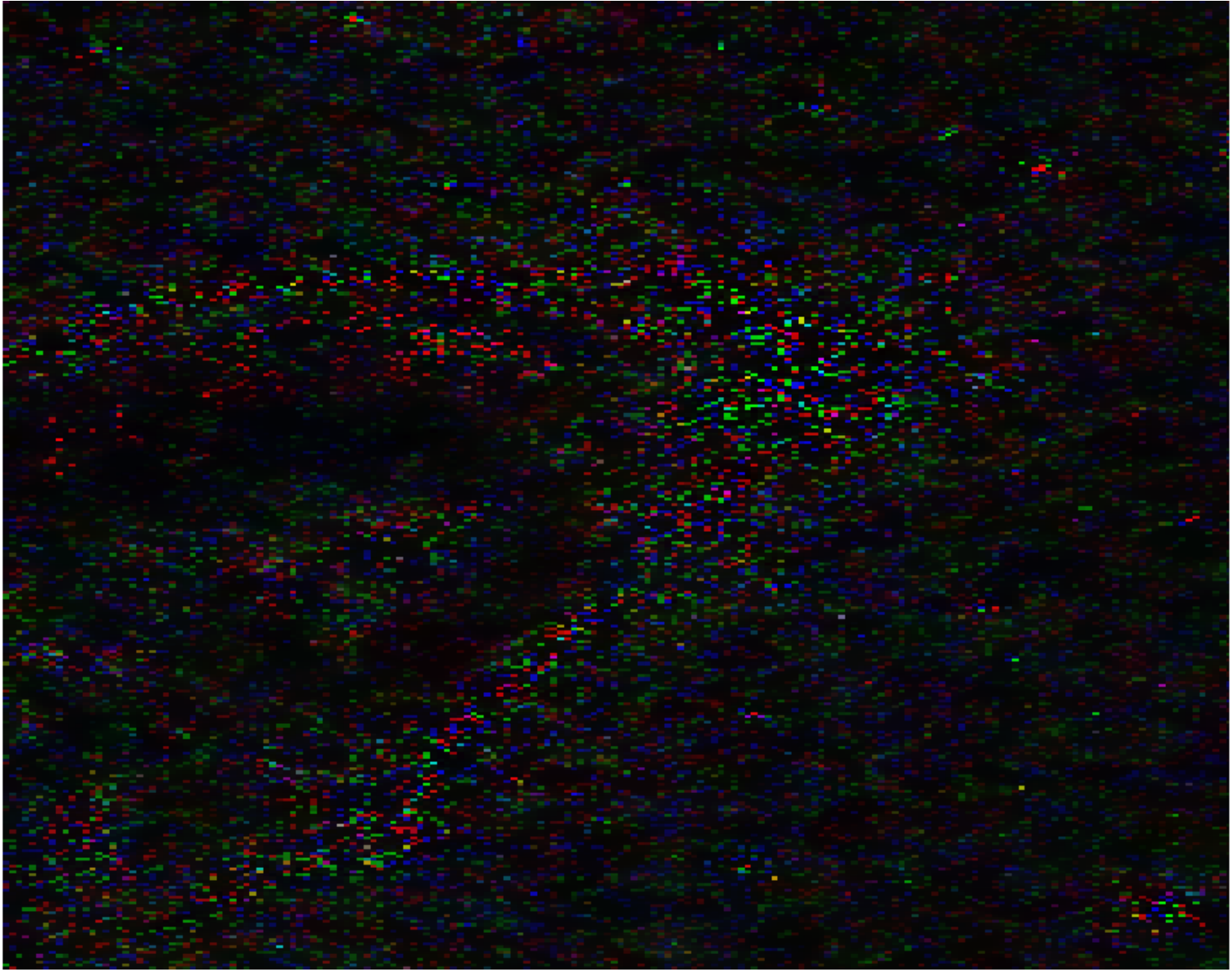} & \includegraphics[trim={0 0 0 0},clip,width=0.15\textwidth]{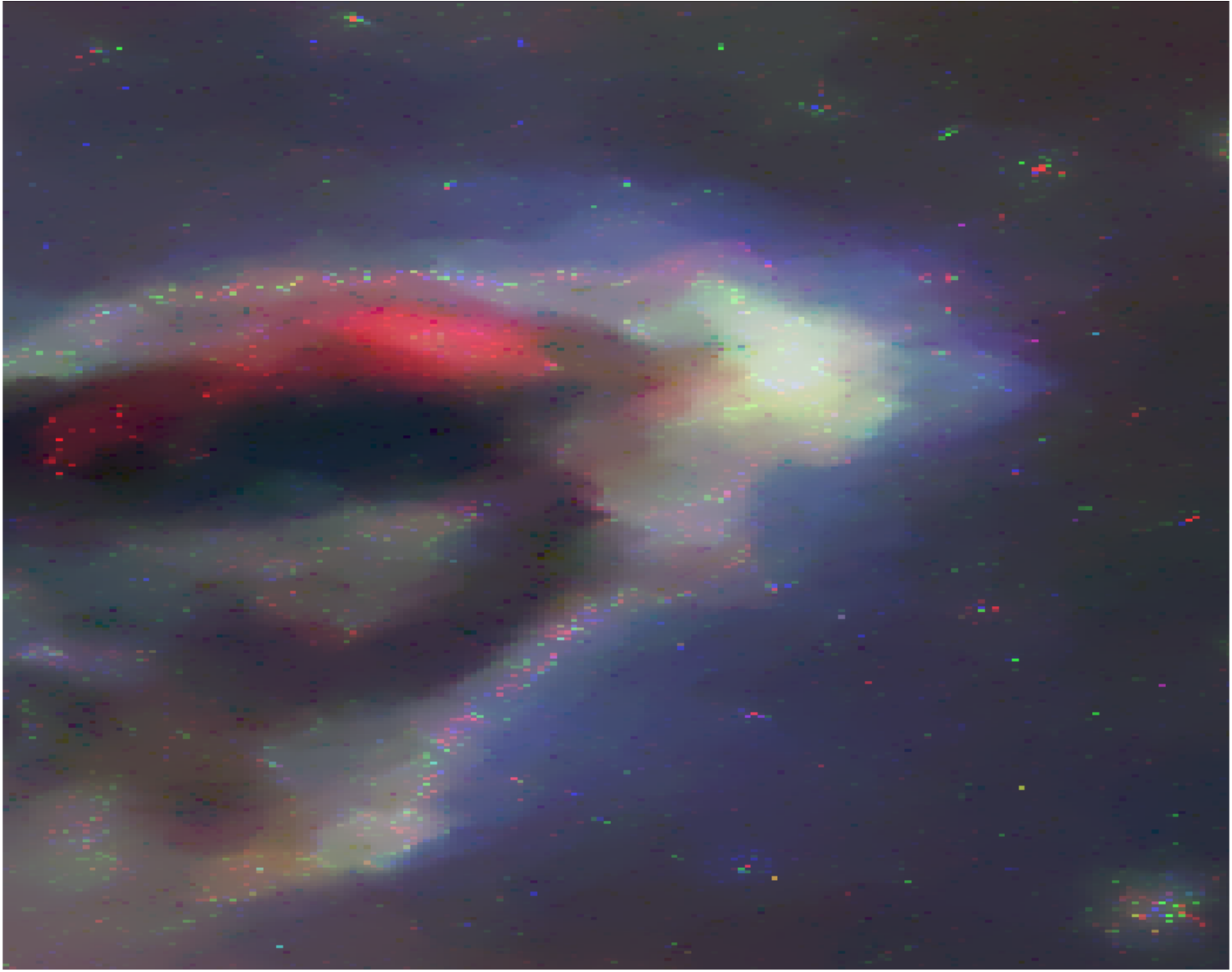}
         \\
         \ftn $z(0.8)$ & \ftn $x_{h,2}^{\text{IML FISTA}}(4.5)$ & \ftn $x_{h,50}^{\text{IML FISTA}}(15.3)$ & \ftn $z(0.8)$ & \ftn $x_{h,2}^{\text{IML FISTA}}(4.5)$ & \ftn $x_{h,50}^{\text{IML FISTA}}(15.2)$ 
         \\
         \includegraphics[trim={0 0 0 0},clip,width=0.15\textwidth]{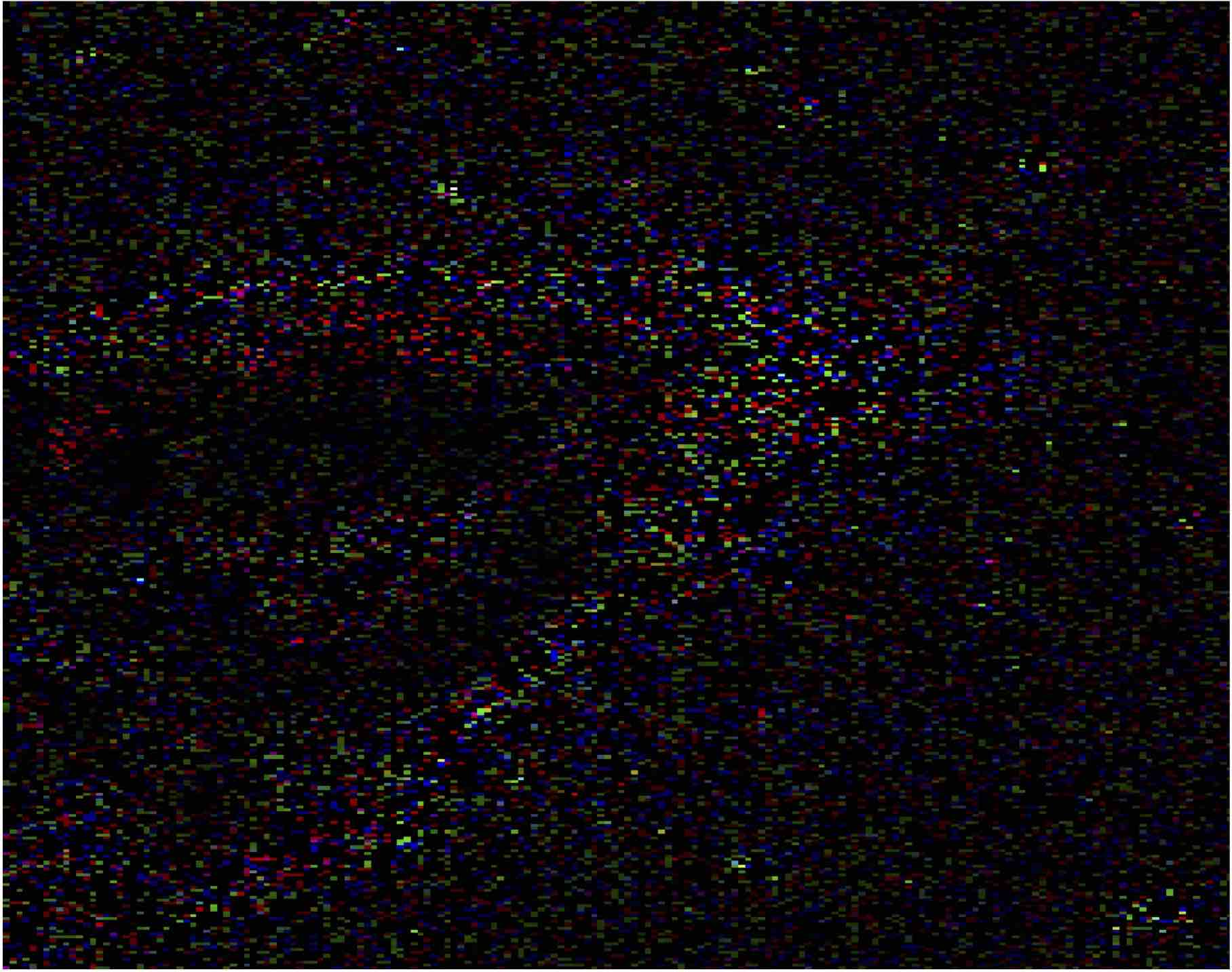} & \includegraphics[trim={0 0 0 0},clip,width=0.15\textwidth]{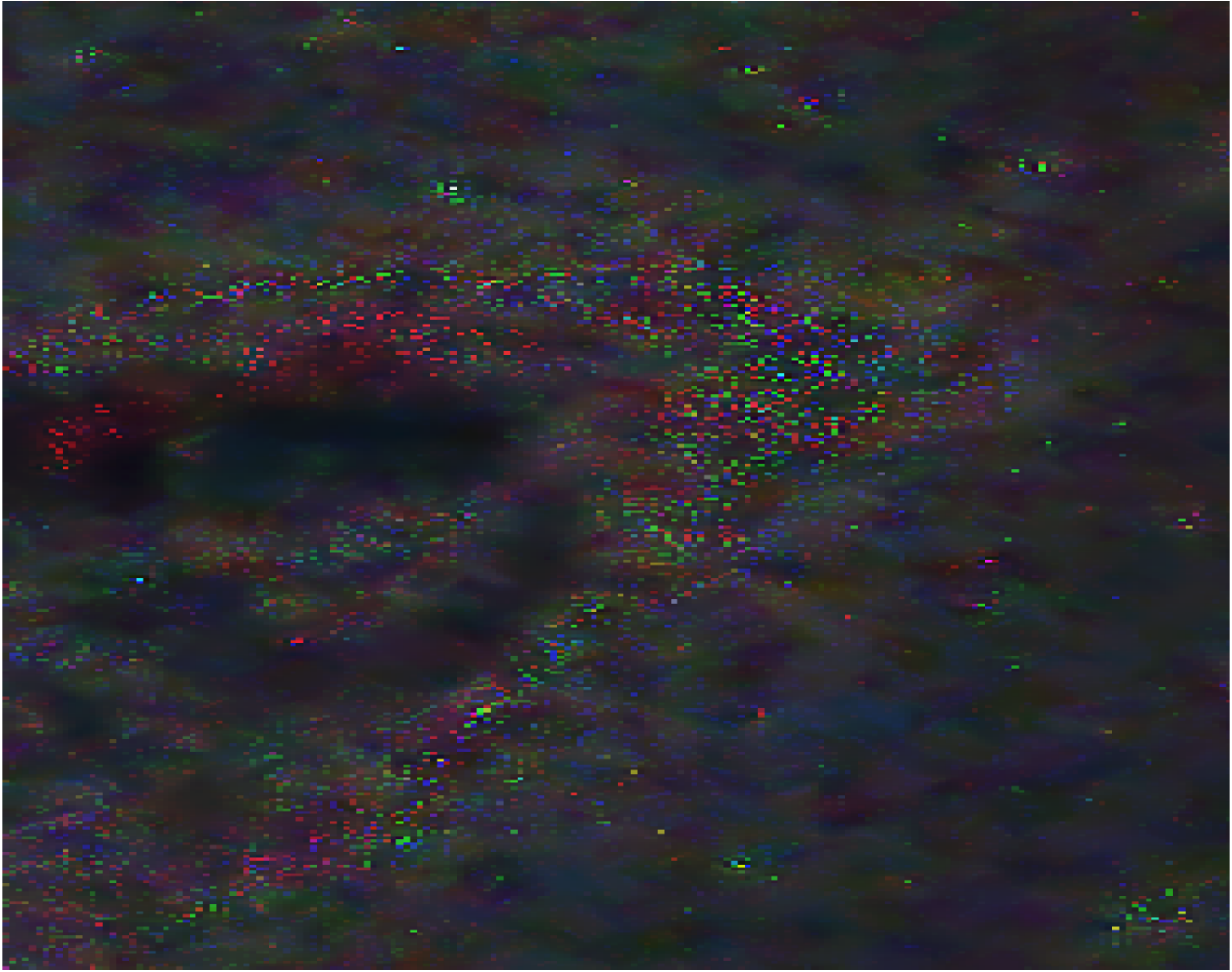} & \includegraphics[trim={0 0 0 0},clip,width=0.15\textwidth]{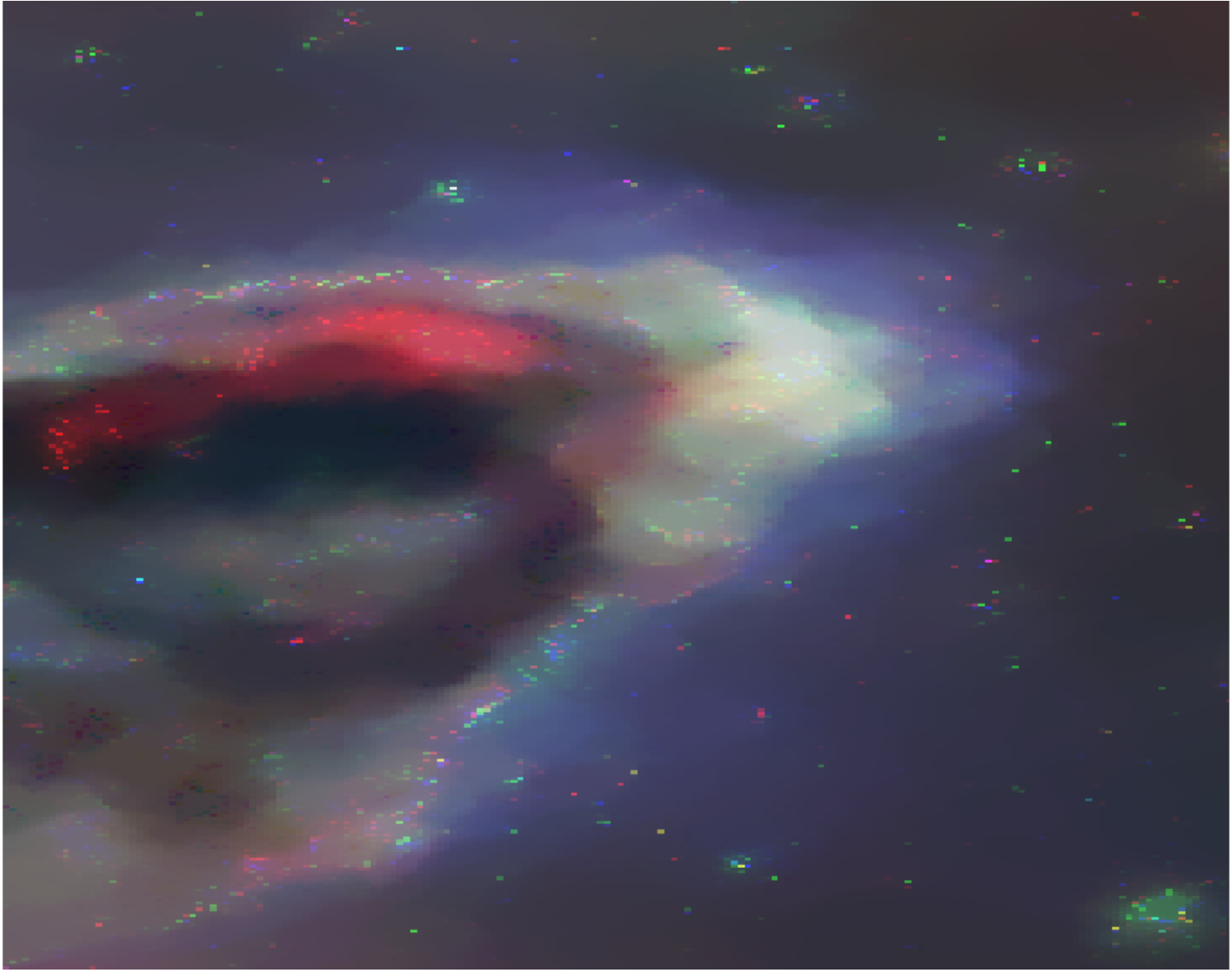} & \includegraphics[trim={0 0 0 0},clip,width=0.15\textwidth]{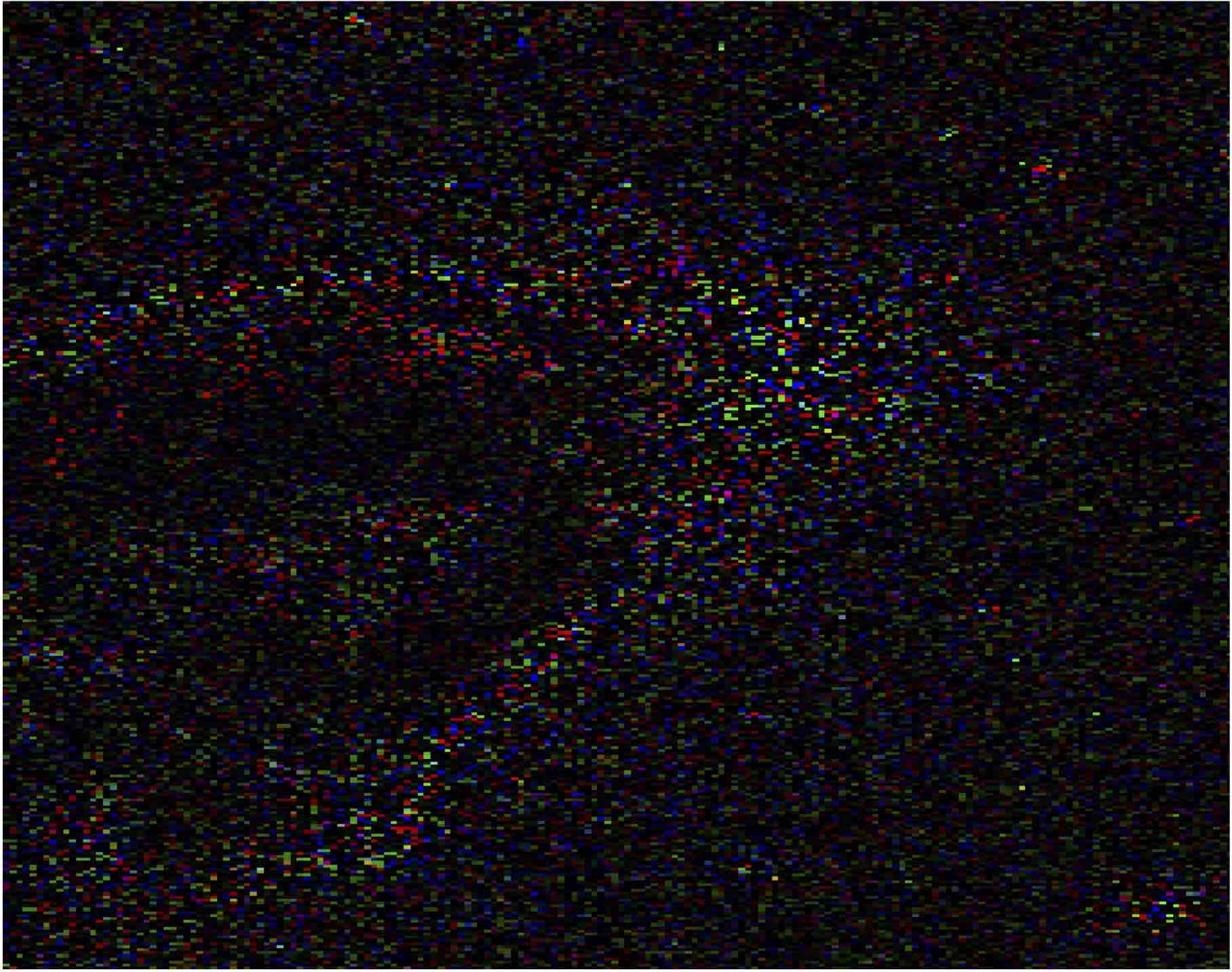} &  \includegraphics[trim={0 0 0 0},clip,width=0.15\textwidth]{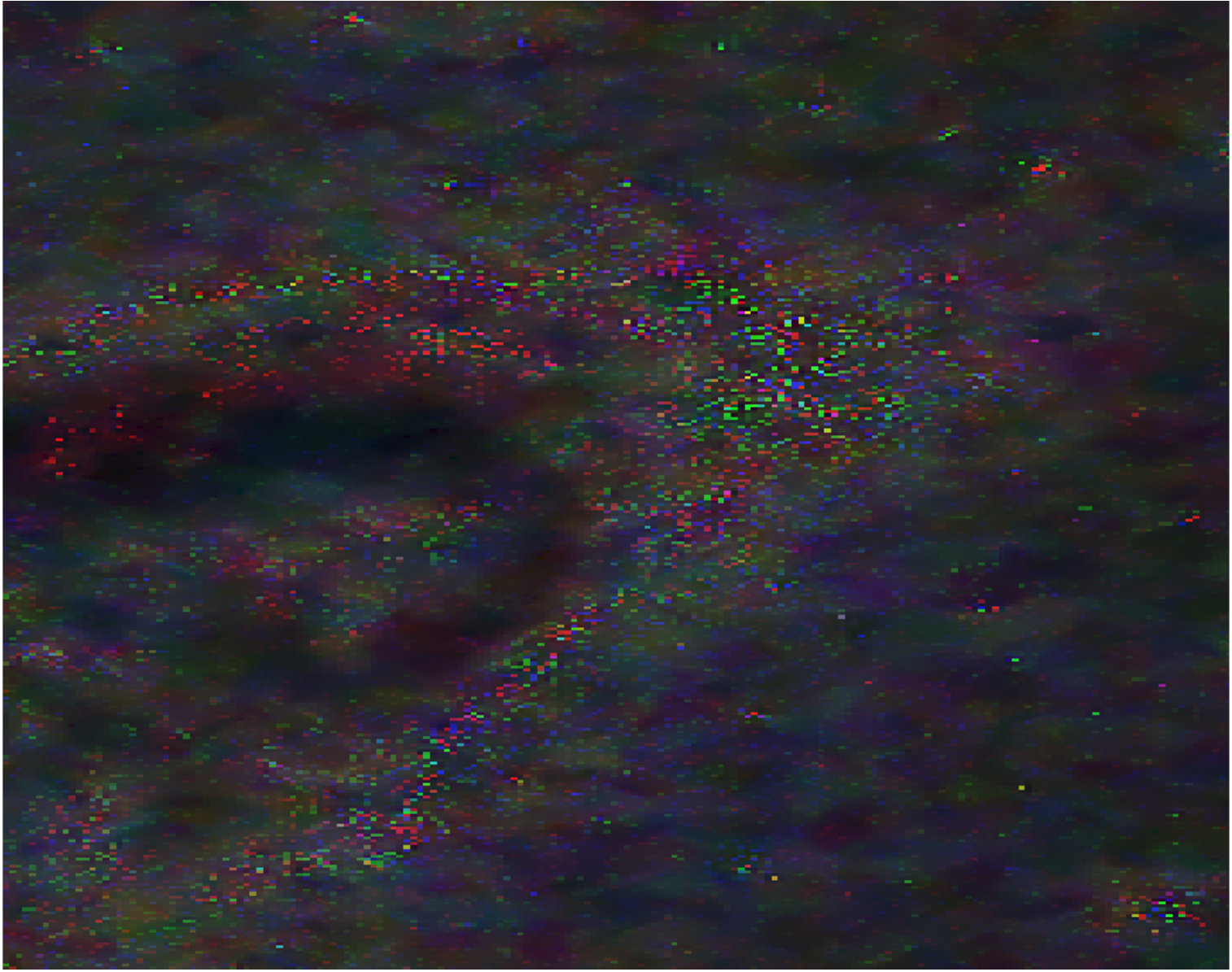} & \includegraphics[trim={0 0 0 0},clip,width=0.15\textwidth]{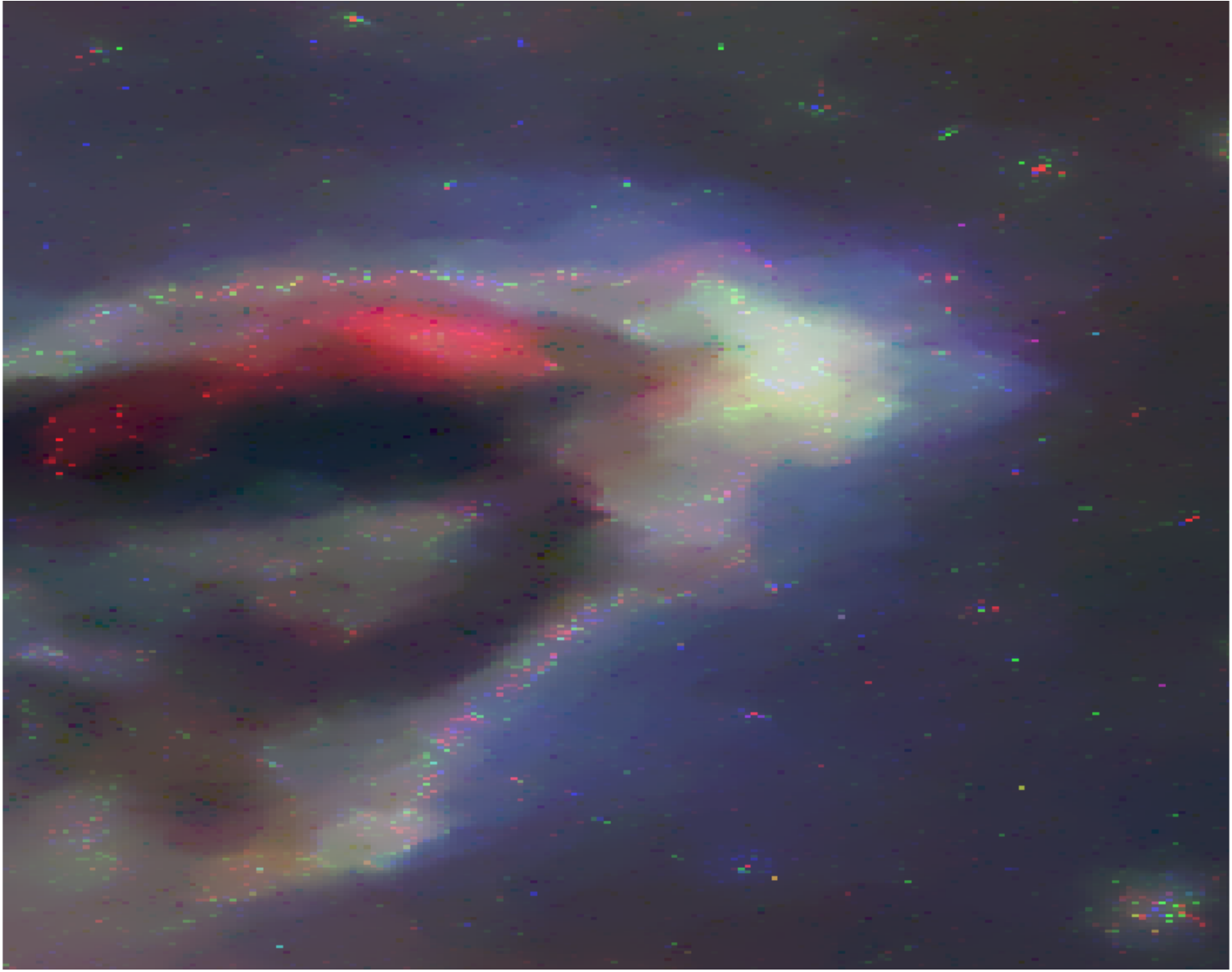}  \\
        \end{tabular}
    \end{center}
    \caption{Inpainting $\ell_{1,2}$-NLTV for the Pillars of Creation image. Small crop of the image at 2 iterations and after 50 iterations for FISTA (top row) and IML FISTA (bottom row) compared to the original ($x$) and degraded ($z$) images. For each image we report the SNR in dB. First column: $\sigma$(noise) $= 0.01$; second column: $\sigma$(noise) $= 0.05$. First row: missing pixels $50\%$; second row: missing pixels $90\%$. \vspace{-0.5em}} 
    \label{fig:NLTV_inpainting_imagesIR}
\end{figure}

Here again, we consider four scenarios based on two variables: the percentage of missing pixels and the standard deviation of the Gaussian noise $\sigma$(noise). These four scenarios are specified in Table \ref{tab:scenarios_inpainting}. 
\begin{table}[h]
\centering
\begin{tabular}{c|c|c|}
\cline{2-3}
\textbf{Inpainting $\backslash$ Noise} & $\sigma$(noise) $= 0.01$ & $\sigma$(noise) $= 0.05$ \\ \hline
\multicolumn{1}{|l|}{ missing pixels $50\%$} & low inpainting, low noise & low inpainting, high noise \\ \hline
\multicolumn{1}{|l|}{missing pixels $90\%$} &  high inpainting, low noise   & high inpainting, high noise \\ \hline 
\end{tabular} \vspace{-1em}
\caption{Four scenarios of inpainting degradation with additive Gaussian noise. \vspace{-0.5cm}}
\label{tab:scenarios_inpainting}
\end{table}
For each of these four scenarios we tested two regularizations: $\ell_{1,2}$-TV and $\ell_{1,2}$-NLTV. In this case we only report the results obtained with the $\ell_{1,2}$-NLTV prior. 

Again, in all cases, the objective function decreases faster with  IML FISTA than with FISTA, proving that the computational cost of multilevel steps is almost negligible. The two performed coarse corrections bring a considerable advantage to the minimization achieved with IML FISTA (Figure \ref{fig:NLTV_inpainting_FHIR}). Also, one can remark that given a capped sub-iterations budget, IML FISTA reaches the smallest possible value, faster than FISTA.  Comparing the two methods after only two iterations of IML FISTA and of FISTA, is particularly convincing as we can observe it in Figure \ref{fig:NLTV_inpainting_imagesIR}.
Moreover, as it was already the case for the deblurring task, IML FISTA outperforms FISTA in terms of convergence speed, specifically when the original image is heavily corrupted.

As for the deblurring task, we display the results under the same degradation contexts (i.e., inpainting and noise) for the Yellow Car image. We reproduce in Figure \ref{fig:NLTV_inpainting_FHIM} the evolution of the objective function when the regularization is the $\ell_{1,2}$-NLTV norm. In contrast to the deblurring case, IML FISTA still performs better than FISTA for an inpainting task on a relatively small image size. This suggests that the dependency of IML FISTA's performances to the problem dimension is clearly linked to the degradation context.
\begin{figure}
    \centering
    \includegraphics[trim={2.5em 2em 0 2.7em},clip,width=0.4\textwidth]{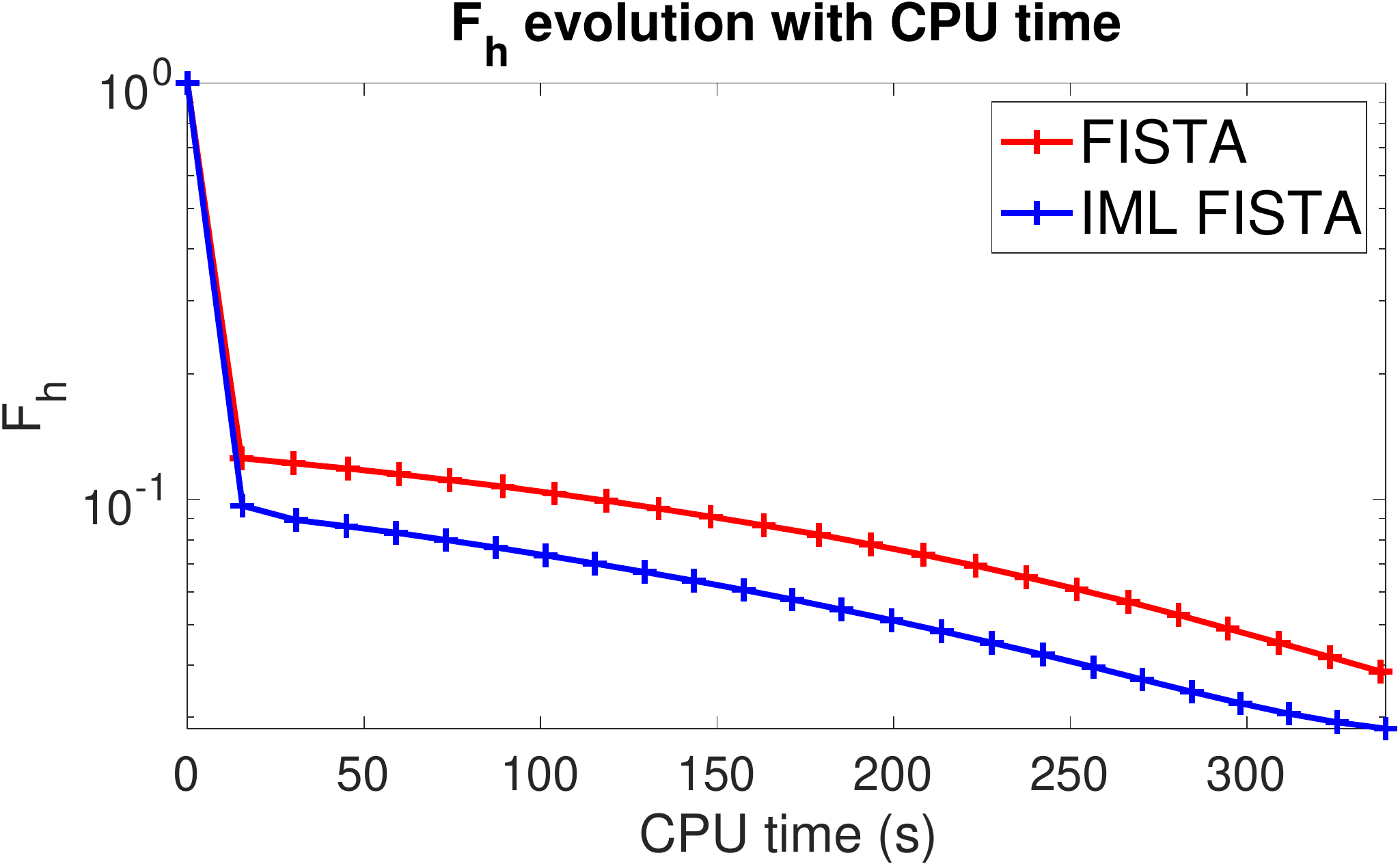}
    \hspace{2em}
    \includegraphics[trim={2.5em 2em 0 2.7em},clip,width=0.395\textwidth]{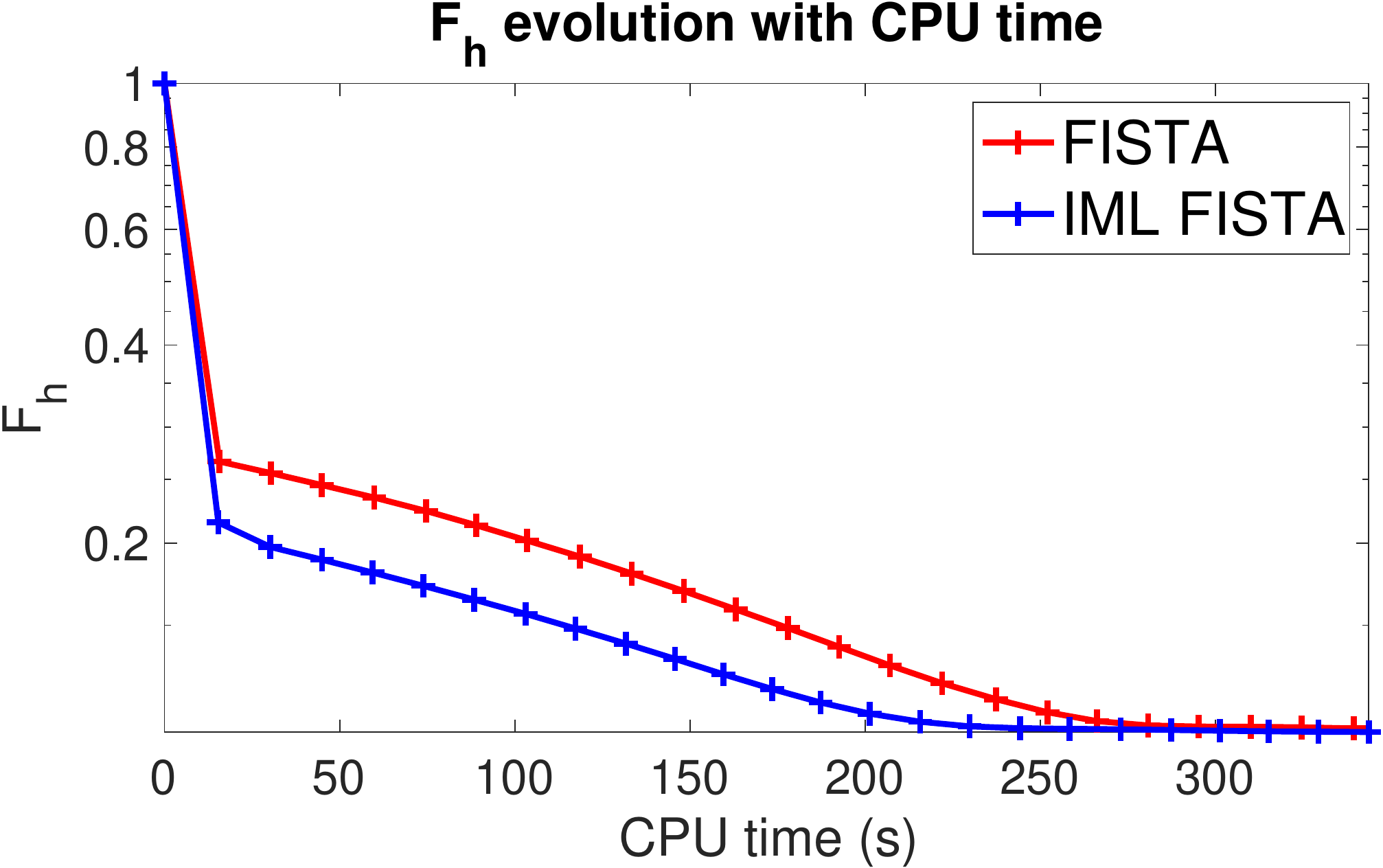} \\ \vspace{0.1em}
    \includegraphics[trim={2.5em 2em 0 2.7em},clip,width=0.4\textwidth]{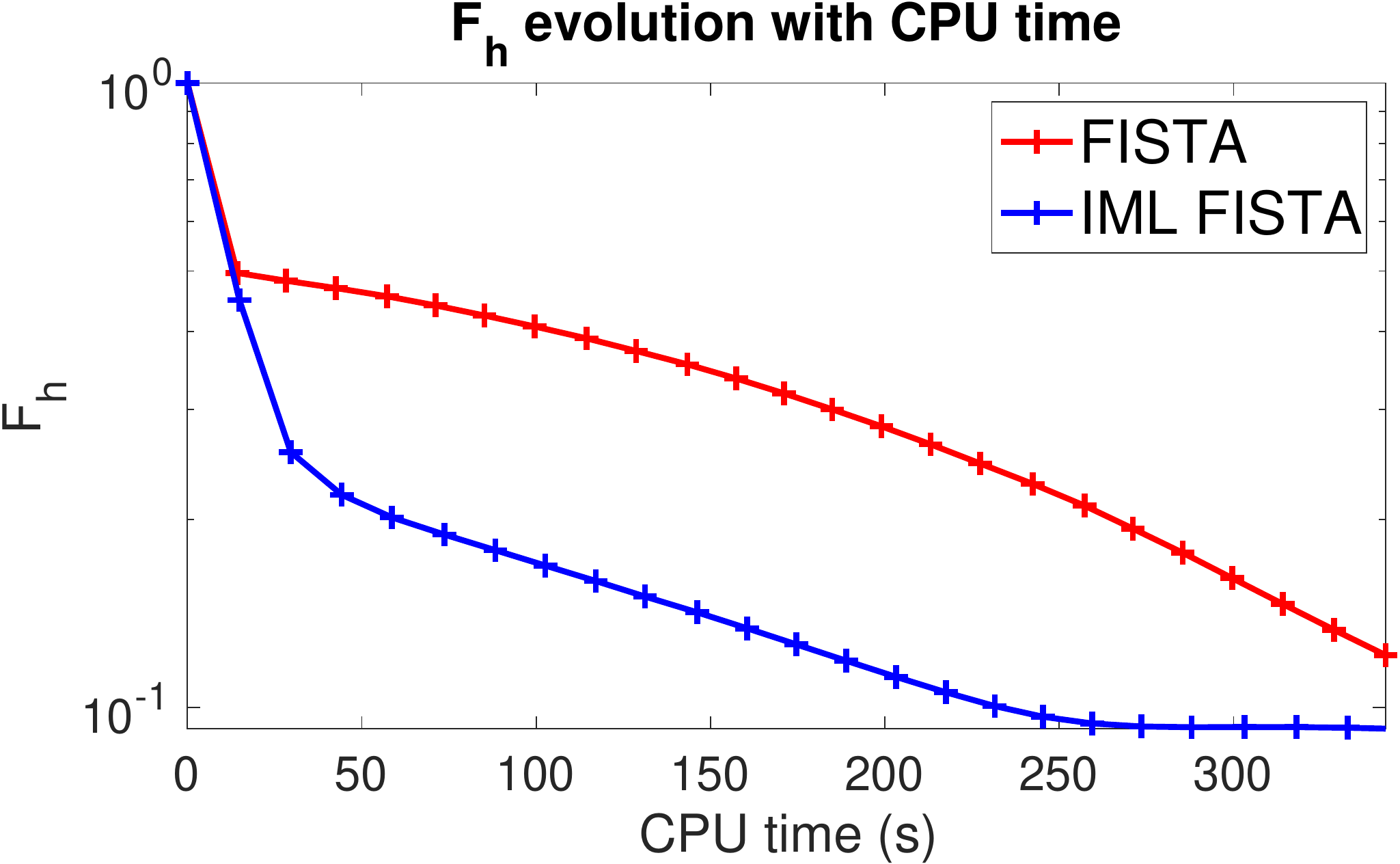}
    \hspace{2em}
    \includegraphics[trim={2.5em 2em 0 2.7em},clip,width=0.395\textwidth]{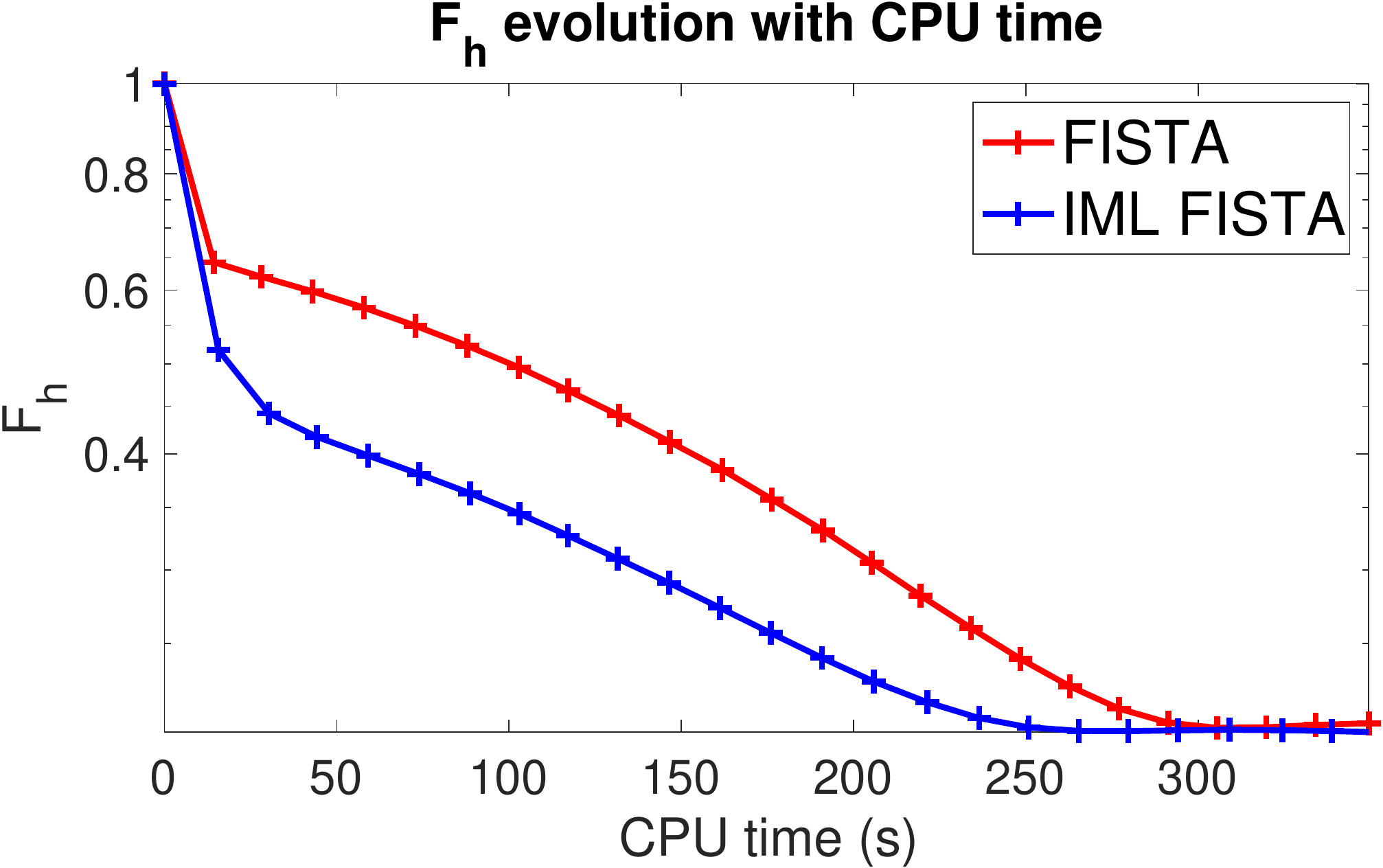}
    \caption{Inpainting $\ell_{1,2}$-NLTV for the Yellow Car image. Objective function (normalized with initialization value) vs CPU time (sec). First column: $\sigma$(noise) $= 0.01$; second column: $\sigma$(noise) $= 0.05$. First row: missing pixels $50\%$; second row: missing pixels $90\%$. For each plot, the crosses represent iterations of the algorithm. \vspace{-1.5em}} 
    \label{fig:NLTV_inpainting_FHIM}
\end{figure}

\subsection{Impact of the information transfer operators}
In this section we investigate the influence of the choice of the CIT on the performance of our multilevel algorithm. 

From the experiments of the previous section, we claim that the performances result from the combination of a faithful coarse minimization and a good design of the information transfer operators. The latter should allow to capture information over wider regions than what is typically done in multilevel optimization \cite{nash2000,parpas2016,parpas2017} where the filter used to generate the information transfer operators is of a rather small size.

To test this hypothesis we compare the CIT built using the ``Haar" wavelet (filter size equal to $2$), the ``Daubechies 20" wavelet (filter size equal to $20$), and the ``Symlets 10" wavelet. The last two have the same quadrature mirror filter length. For deblurring problems, no significant difference was observed between these three CITs. The influence was more noticeable for inpainting problems, and the results are displayed in Figure \ref{fig:NLTV_inpainting_IT}. The principal factor seems to be the length of the filter, this is not surprising since it determines the domain extension over which pixels are aggregated. Nonetheless, even with the Haar wavelet, IML FISTA reaches lower objective function values faster than FISTA, meaning that the use of coarse models is beneficial to the optimization regardless of the chosen wavelet-based CIT. 
\begin{figure}
    \centering
    \includegraphics[trim={2.5em 2em 0 2.7em},clip,width=0.4\textwidth]{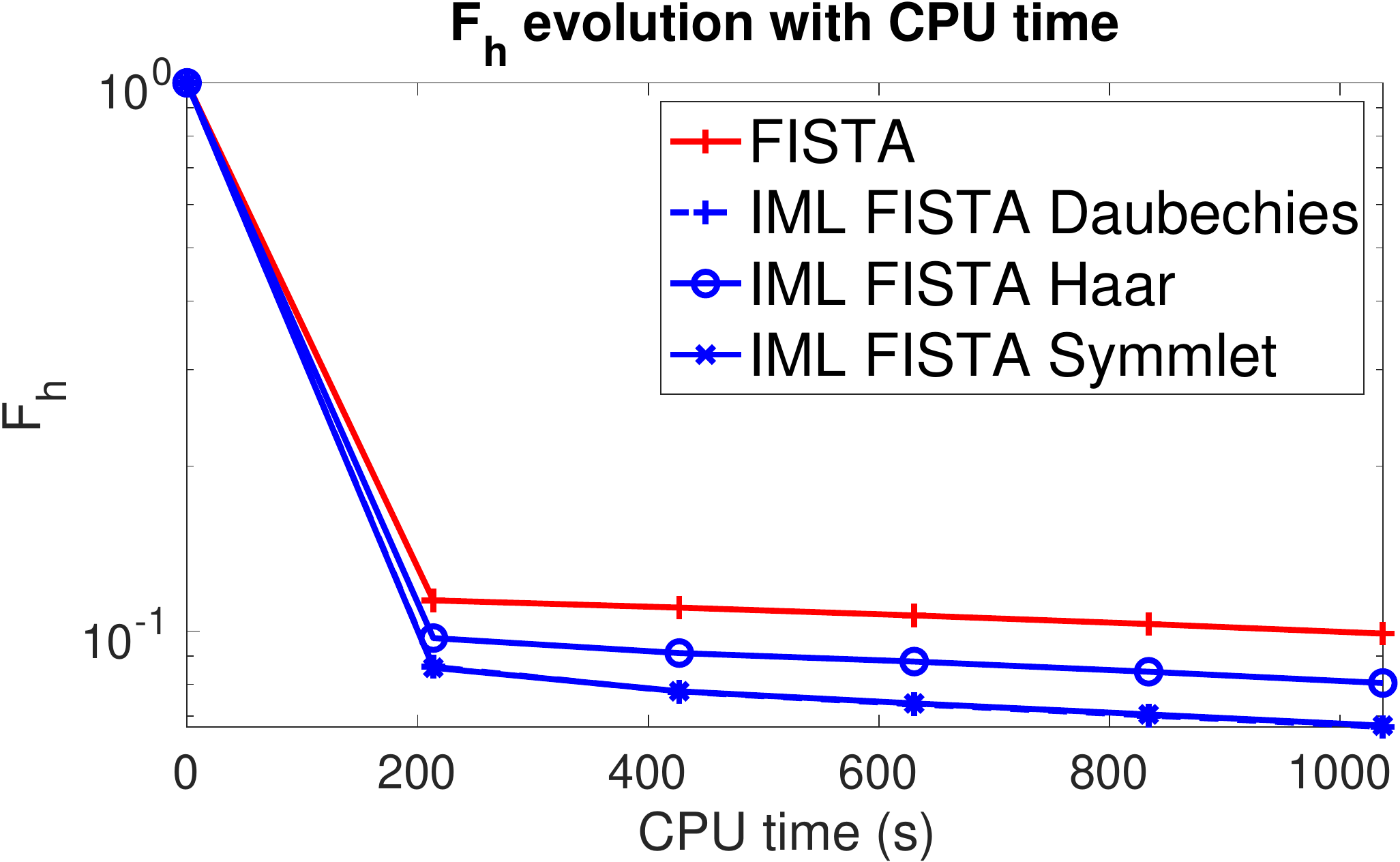}
    \caption{Inpainting $\ell_{1,2}$-NLTV for the Pillars of Creation image. Objective function (normalized with initialization value) vs CPU time (sec). $\sigma$(noise) $= 0.01$, missing pixels $50\%$. Comparison of information transfer operators: ``Haar", ``Daubechies 20" and ``Symlets 10". \vspace{-1.5em}} 
    \label{fig:NLTV_inpainting_IT}
\end{figure}
\subsection{IML FISTA: application to hyperspectral images restoration} 
We conclude this experimental section by applying IML FISTA to an hyperspectral image (HSI) restoration problem. The acquisition of hyperspectral images is of tremendous importance in many fields such as remote sensing \cite{lu2020} or art analysis \cite{khan2018,pillay2019}.  It is often impaired by missing data  and noise due to cameras defect and blurring effects. Several methods have been designed to handle them \cite{rasti2018}. Among them, the variational approach is of great interest but suffers from a high computational cost \cite{rasti2018}. This approach is a particular case of Problem \eqref{eq:ml_pb} where
\begin{equation}
\vspace{-0.5em}
    \label{eq:pb_hyp}
    F_h(x) = \frac{1}{2} \Vert \A_h x - z \Vert_2^2 + \lambda \sum_{i=1}^{N_h} \Vert (\D_h x)^i \Vert_{*},
\end{equation}
Here a coarse level can be derived naively from the nature of those images: high correlation between bands is observed on hyperspectral images and thus it seems natural to exploit this redundancy to reduce the dimension and restore the image. 

\paragraph{Notations} Formally, we denote $x^{(i,b)} = x^{(i_1,i_2,b)}$  the pixel located at the spatial index $i=(i_1,i_2) \in \{1,\ldots,N_r\} \times \{1,\ldots,N_c\}$ and band $b\in \{1 ,\ldots,L\}$ of HSI $x$. $x$ can be represented as a hypercube of size $N_h = L \times N_r \times N_c$. We denote $w^{(b)}$ the wavelength associated with the $b$ band. We also note $\mu(\Delta)$ the mean of the wavelength differences $\Delta_b = w^{(b+1)}-w^{(b)}$ for all bands $b$ and $\sigma(\Delta)$ the associated standard deviation. Here we are interested in restoring a $33 \times 512 \times 512$ hyperspectral image of an engraving\footnote{\href{https://personalpages.manchester.ac.uk/staff/d.h.foster/Hyperspectral_Images_of_Illustrated_Manuscripts.html}{St Christopher} : acquired by the authors of \cite{foster2006}.} which can be seen in Figure \ref{fig:NLTV_inpainting_blur_HSI}.

\paragraph{Data fidelity term} To perform the restoration of such images, we model the degradation as the combination of a gaussian blur and a mask on the pixels (in this order). 


\paragraph{Regularization term} We consider the structure tensor non-local TV  penalization proposed in \cite{chierchia2014}, that consists in applying the nuclear norm $\Vert \cdot \Vert_{*}$ on a tensor concatenating the non-local finite difference for every band. 
The nuclear norm 
allows us to take into account the strong correlation between the bands to improve the reconstruction.

\paragraph{Information transfer operator} We aim to reduce the size of an HSI by reducing the number of bands. 
A small wavelength difference between two successive bands suggests a strong correlation between them. This similarity can be difficult to measure in our case (for a review of methods see \cite{jia2016}) because the observed HSI is very degraded. We have therefore chosen a simple heuristic to infer this correlation, independent of the content of the band. For all $b\in \{1,\ldots,L\}$, 
%
{every two consecutive bands whose wavelengths difference is smaller than $\mu(\Delta)+\sigma(\Delta)$ 
are averaged and merged in a single one. Other bands are kept.}{}
We apply the same operation on $\A_h$ by averaging the blocks that represent the bands. 
\paragraph{Multilevel parameters} The proposed multilevel algorithm has then $5$ levels, and at the last level the HSI is of size $3 \times 512 \times 512$. The configuration remains the same as presented in previous experiments. In a previous work \cite{lauga2023methodes} we have seen that $d=0.5$ was a good trade-off between relaxing the necessary decrease of the proximity operator estimation's error and having a sufficient decrease of the objective function at each iteration with the inertia. 
The two algorithm were stopped after a given computation time accounting for $50$ iterations of FISTA, and $41$ of IML FISTA.

\paragraph{Results} The evolution of the objective function and the reconstructed hyperspectral image  of this experiment are displayed in Figure \ref{fig:NLTV_inpainting_blur_HSI}. Essentially, the decrease of the objective function obtained by IML FISTA is faster than what it is obtained by FISTA on about 50 iterations while only calling \textbf{ML} twice.
\begin{figure}
    \centering
    \begin{center}
        \setlength{\tabcolsep}{3pt}
        \begin{tabular}{cccc}
        \multirow{4}{*}[-0.4in]{\includegraphics[trim={2.5em 2em 0 2.7em},clip,width=0.4\textwidth]{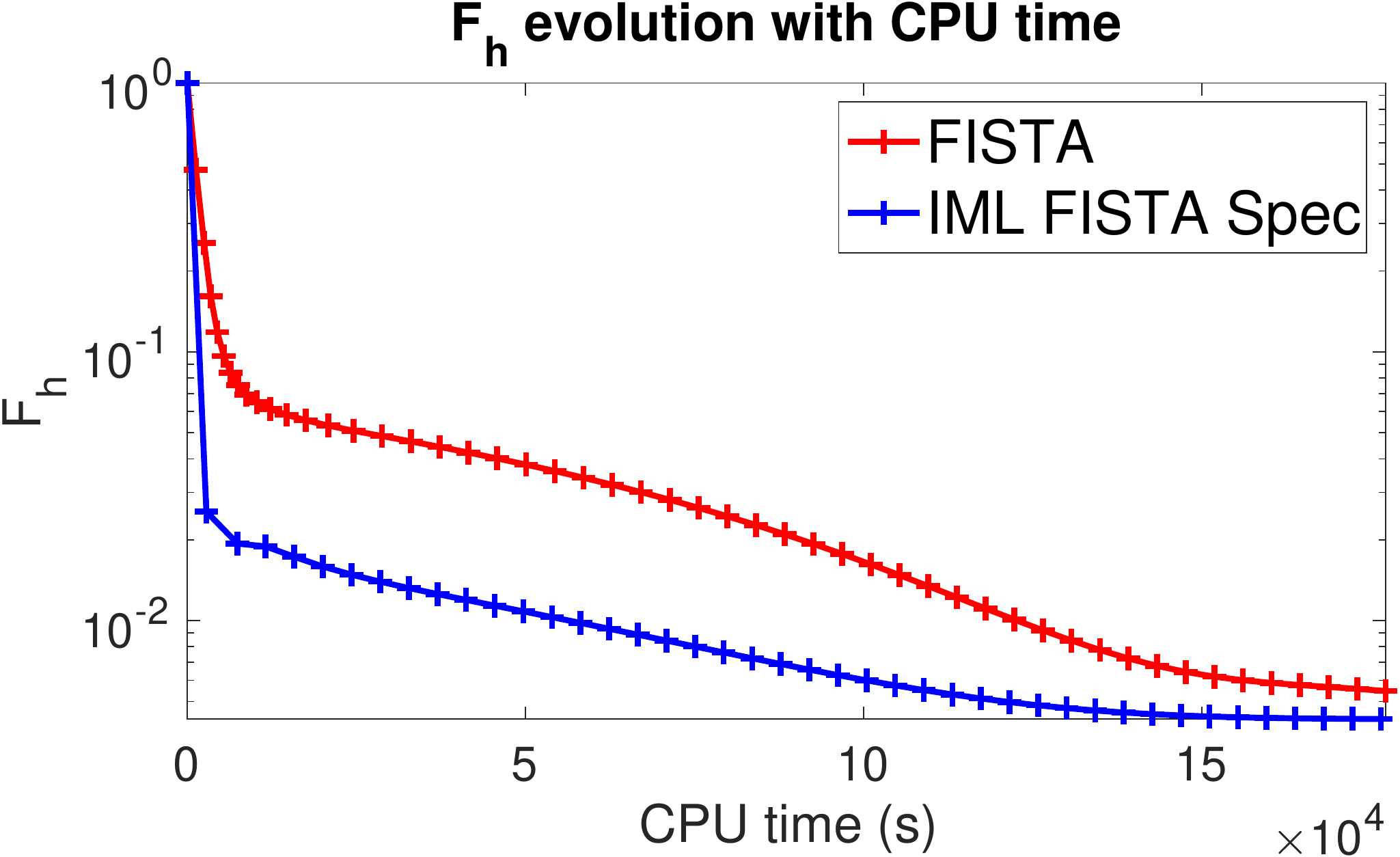}}
        & \ftn $x$ & \ftn $x_{h,2}^{\text{FISTA}}(6.5)$ & \ftn $x_{h,\text{end}}^{\text{FISTA}}(28.3)$ \\
         & \includegraphics[trim={0 0 0 0},clip,width=0.15\textwidth]{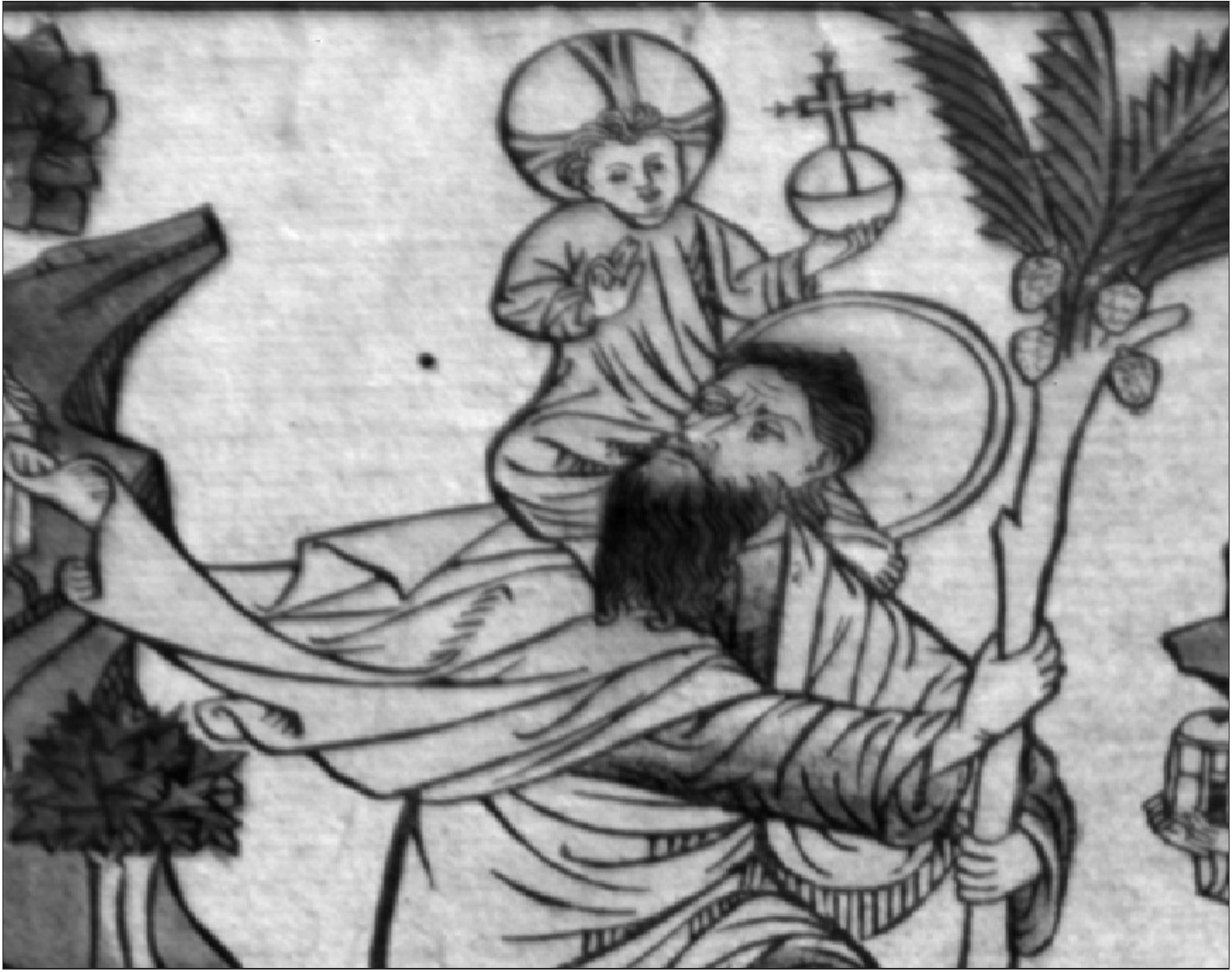} & \includegraphics[trim={0 0 0 0},clip,width=0.15\textwidth]{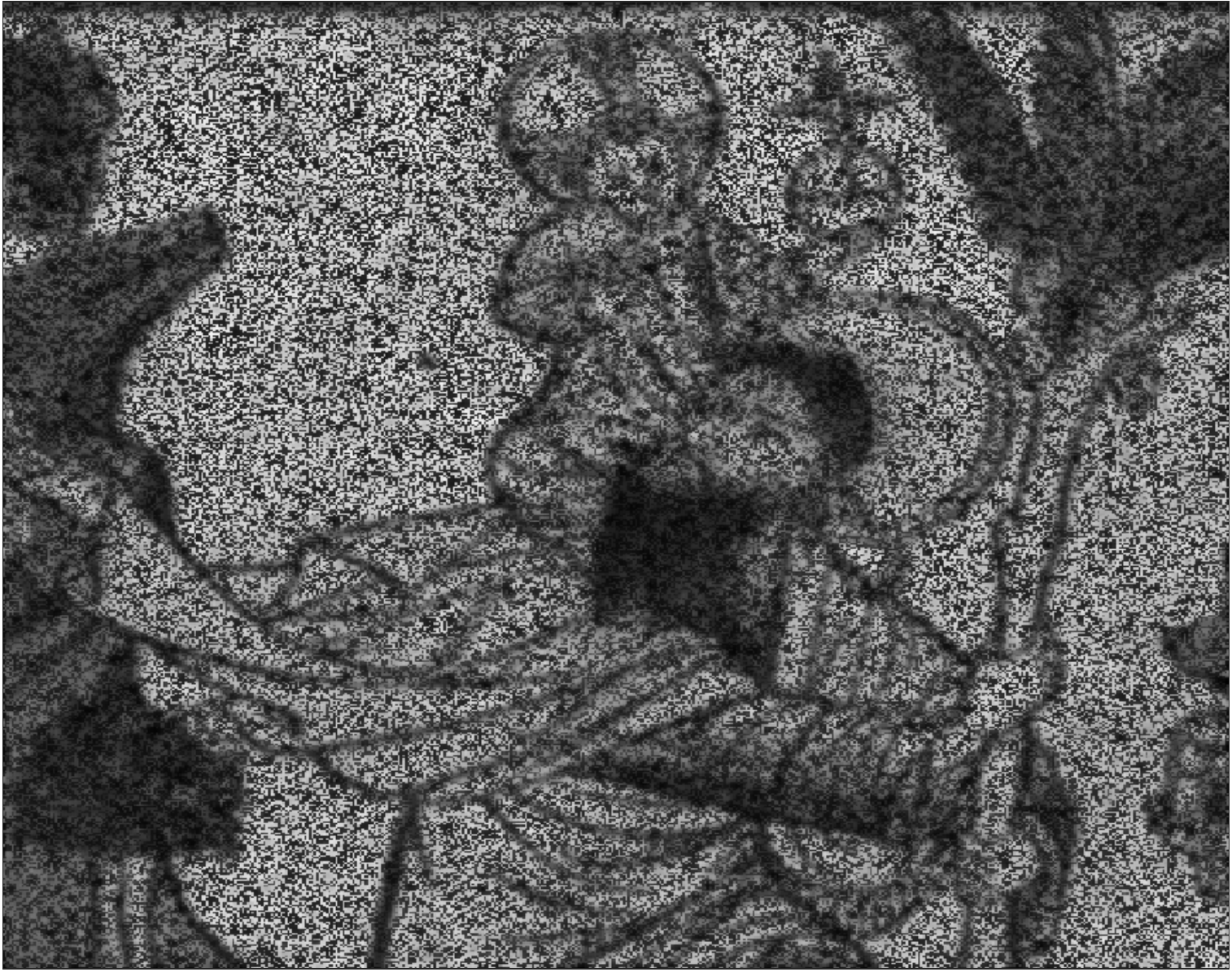} & \includegraphics[trim={0 0 0 0},clip,width=0.15\textwidth]{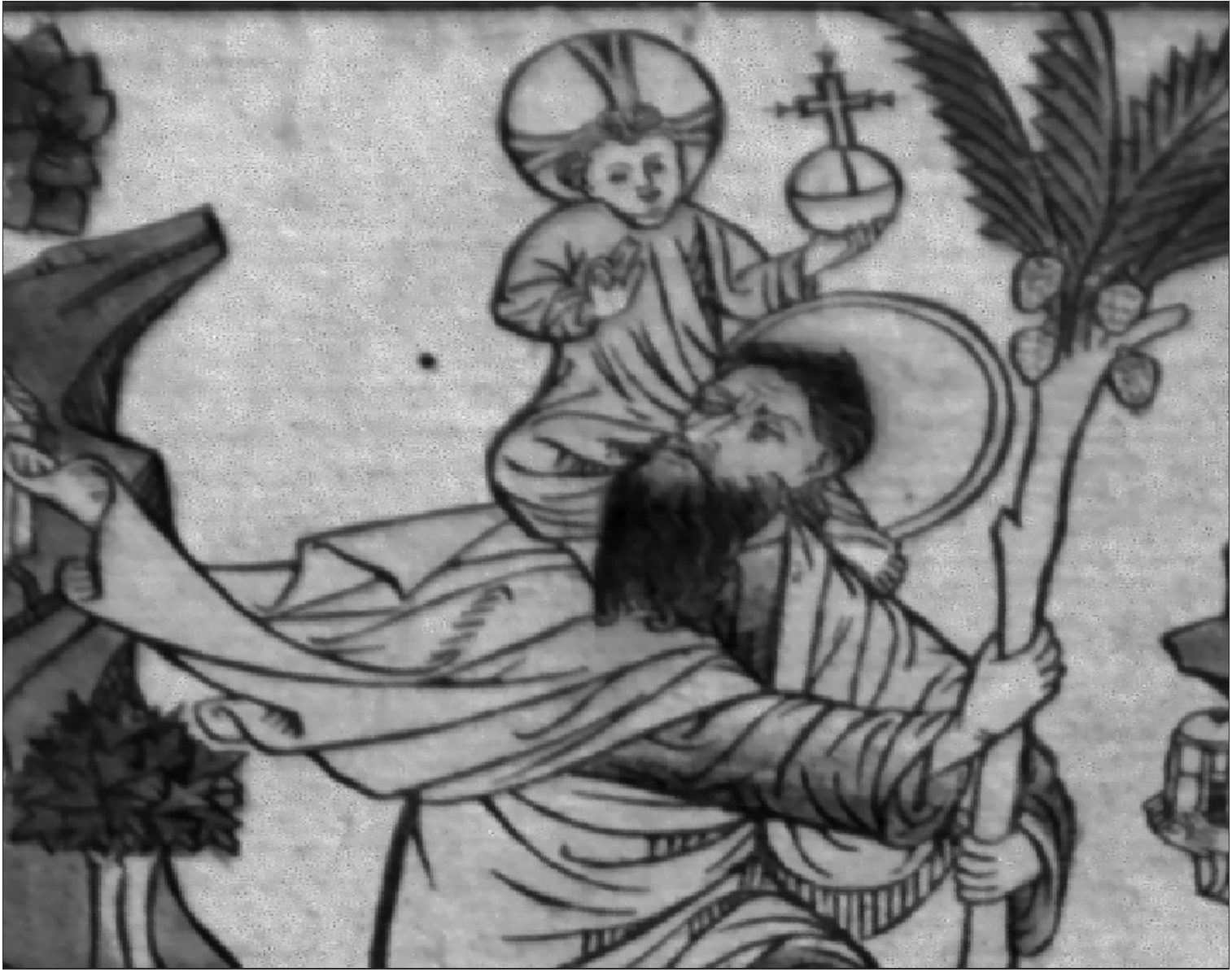} 
         \\
         & \ftn$z(3)$ & \ftn$x_{h,2}^{\text{IML FISTA}}(18.0)$ & \ftn$x_{h,\text{end}}^{\text{IML FISTA}}(33.7)$
         \\
         & \includegraphics[trim={0 0 0 0},clip,width=0.15\textwidth]{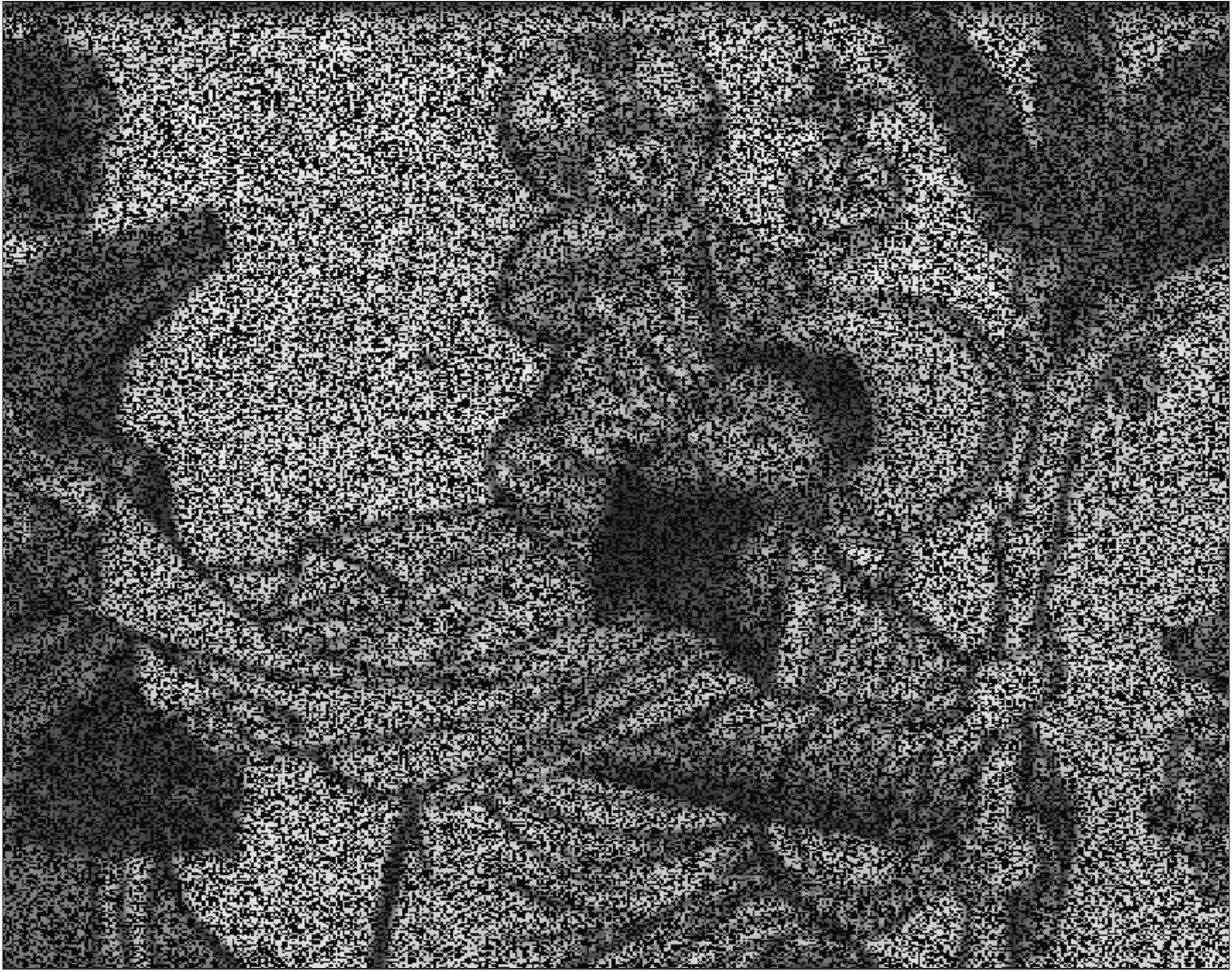} & \includegraphics[trim={0 0 0 0},clip,width=0.15\textwidth]{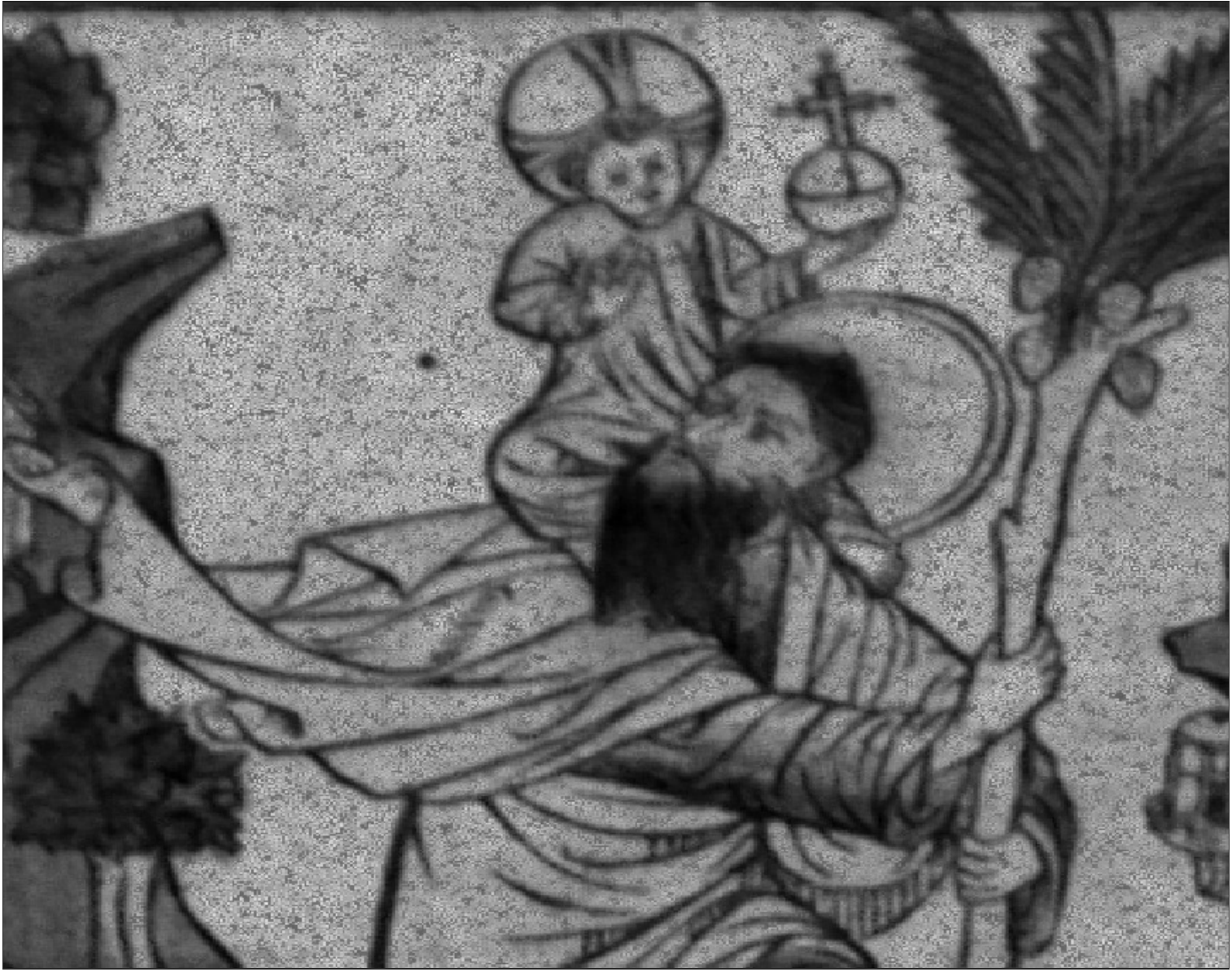} & \includegraphics[trim={0 0 0 0},clip,width=0.15\textwidth]{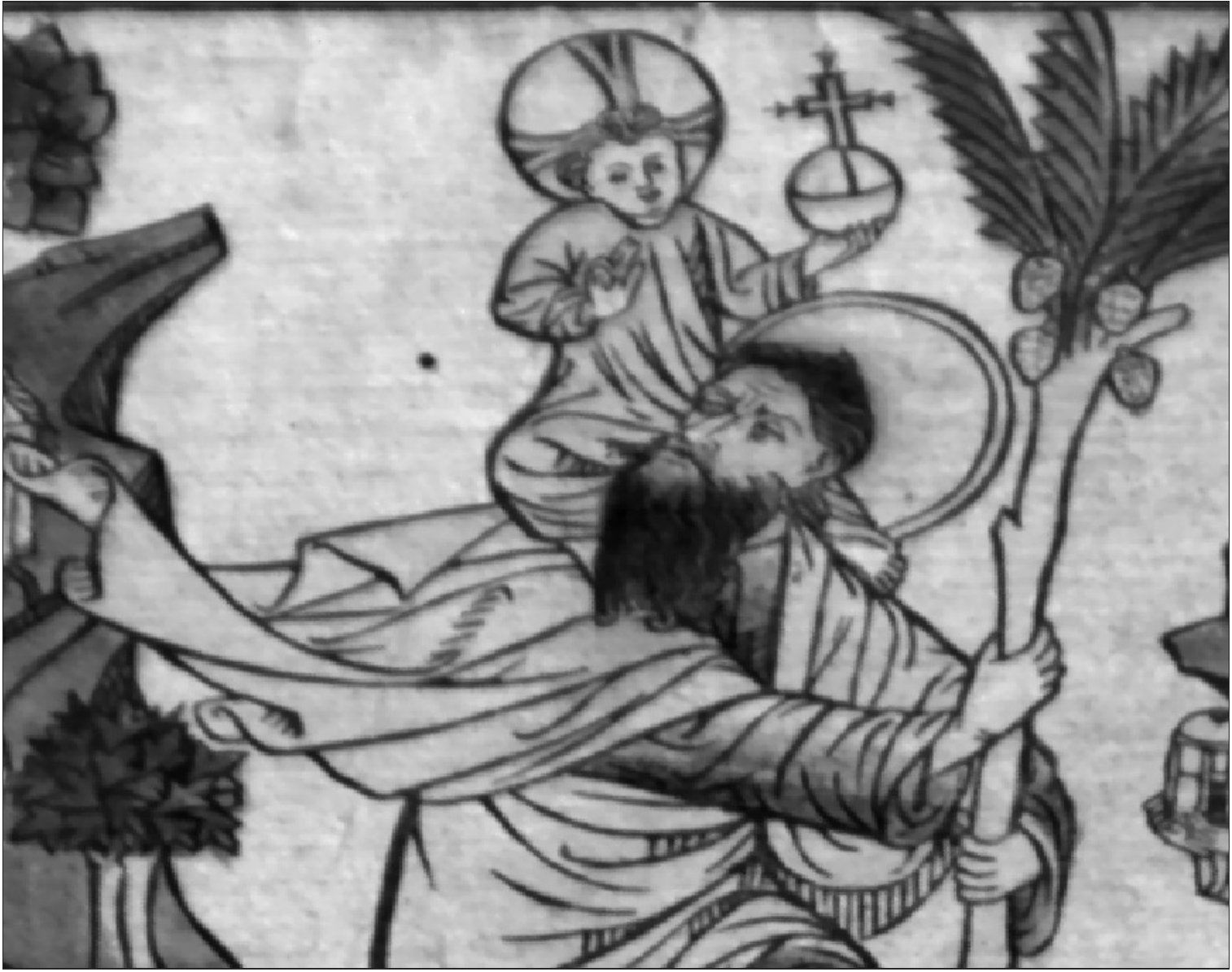} 
         
        \end{tabular}
    \end{center}
    \caption{Blurring and inpainting $\ell_{*}$-NLTV for the St-Christopher engraving hyperspectral image. Missing pixels $50\%$, dim(PSF) $=5$, $\sigma$(PSF) $= 0.9$, $\sigma$(noise) $= 0.01$. On the left, objective function (normalized with initialization value) vs CPU time ($\times 10^4$ sec). On the right, band $15$ of the HSI for FISTA and IML FISTA after $2$ iterations and at the end of the computation time budget ($50$ iterations of FISTA). \vspace{-1.5em}} 
    \label{fig:NLTV_inpainting_blur_HSI}
\end{figure}

\color{black}
\section{Conclusion and perspectives}

We have proposed a convergent multilevel framework for inertial and inexact proximal algorithms. In particular, this framework encompasses an inexact multilevel FISTA designed for image restoration and it is able to handle state-of-the-art regularizations in this context.  The proposed algorithm has theoretical convergence guarantees that are comparable to those obtained with leading algorithms in the field. From a practical point of view, our method shows very good performance on a wide range of degradation configurations and reaches good approximations of the optimal solution in a much smaller CPU time than FISTA, on large size problems. It also allows to deal with non differentiable functions whose proximity operator is not available under closed form, making the procedure applicable to a broad set of problems.

Among its many advantages, IML FISTA provides good quality reconstructions faster than FISTA. This opens up a great opportunity to deal with problems of large dimension, especially when limited computational resources prevent convergence from being achieved in a systematic way. In addition, this accelerated coarse approximation could play an important role in applications where image reconstruction is only a pre-processing task.
    
The main challenge for future work is to provide a better understanding of the link between the coarse and the fine level optimization. If the effect of coarse iterations and their impact on the different frequency components of the error is well studied for partial differential equations, much remains to be understood for proximal multilevel methods and specifically in the context of image restoration. For instance, it is yet not clear which is the best way of constructing lower level models, which deeply influences the performance of the method but depends on the problem at hand \cite{lauga2022_1}, or what are the conditions that make the coarse optimization useful for the general problem. One of the factors identified in this article is the nature and the intensity of the degradation : more degradation means better performance for IML FISTA compared to FISTA, while lower signal-to-noise ratio may worsen the results.  

\section*{Acknowledgments}
The authors would like to thank the GdR ISIS for the funding of the MOMIGS project and the ANR-19-CE48-0009 Multisc’In project. We also gratefully acknowledge the support of the Centre Blaise Pascal’s IT test platform at ENS de Lyon (Lyon, France) for the computing facilities.  The platform operates the SIDUS [1] solution developed by Emmanuel Quemener. 

\vspace{-1.5em}

\bibliographystyle{siamplain}
\bibliography{strings}

\end{document}